\crefname{hypothesis}{Hypothesis}{Hypotheses}
\Crefname{ALC@unique}{Line}{Lines}
\colorlet{texcscolor}{blue!50!black}
\colorlet{texemcolor}{red!70!black}
\colorlet{texpreamble}{red!70!black}
\colorlet{codebackground}{black!25!white!25}
\lstdefinestyle{siamlatex}{%
  style=tcblatex,
  texcsstyle=*\color{texcscolor},
  texcsstyle=[2]\color{texemcolor},
  keywordstyle=[2]\color{texemcolor},
  moretexcs={cref,Cref,maketitle,mathcal,text,headers,email,url},
}
\DeclareTotalTCBox{\code}{ v O{} }
{ 
  fontupper=\ttfamily\color{black},
  nobeforeafter,
  tcbox raise base,
  colback=codebackground,colframe=white,
  top=0pt,bottom=0pt,left=0mm,right=0mm,
  leftrule=0pt,rightrule=0pt,toprule=0mm,bottomrule=0mm,
  boxsep=0.5mm,
  #2}{#1}
\patchcmd\newpage{\vfil}{}{}{}
  \title{A Second-order method on graded meshes for fractional Laplacian via Riesz fractional derivative with a singular source term}
\author{Minghua Chen\thanks{School of Mathematics and Statistics, Gansu Key Laboratory of Applied Mathematics and Complex Systems, Lanzhou University, Lanzhou 730000, P.R. China (\email{chenmh@lzu.edu.cn}).}
\and Jianxing Han\thanks{School of Mathematics and Statistics, Gansu Key Laboratory of Applied Mathematics and Complex Systems, Lanzhou University, Lanzhou 730000, P.R. China (\email{hanjx2023@lzu.edu.cn}).}
\and Jiankang Shi\thanks{School of Mathematics and Computational Science, Xiangtan University, Xiangtan 411105, Hunan, People's Republic of China (\email{shijk@xtu.edu.cn}).}
\and Fan Yu\thanks{Institute for Math \& AI, Wuhan University, Wuhan 430072, P.R. China (\email{yufan24@whu.edu.cn}).}
}
\begin{document}
\maketitle

\begin{tcbverbatimwrite}{tmp_\jobname_abstract.tex}
\begin{abstract}
  The high-order numerical analysis for fractional Laplacian  via the Riesz fractional derivative, under the low regularity solution,  has presented  significant challenges in the past decades.
  To fill in this gap, 
 we  design a grid mapping function  on graded meshes  to analyse the  local truncation errors, which  are  {\em far  less than} second-order convergence at the boundary layer. 
 To restore the  second-order global errors, we construct an appropriate  right-preconditioner for the resulting matrix algebraic equation. We prove that the proposed scheme  achieves  second-order convergence on graded meshes even if the source term is singular or hypersingular. Numerical experiments illustrate the theoretical results.
 The proposed approach is applicable for multidimensional fractional diffusion equations, gradient flows and nonlinear equations.
\end{abstract}

\begin{keywords}
  Fractional Laplacian via Riesz fractional derivative, Difference-quadrature method,  Singular source term,  Stability and convergence, Graded meshes 
\end{keywords}

\begin{MSCcodes}
  26A33, 26A30,  65L20
\end{MSCcodes}
\end{tcbverbatimwrite}
\input{tmp_\jobname_abstract.tex}

\section{Introduction}
Fractional Laplacian is a powerful tool in modeling phenomena for anomalous diffusion, which appears naturally in the \(\alpha\)-stable L\'evy process instead of the standard Brownian motion 
\cite{ABBM2018,Bertoin:96,Getoor1961,MR2584076, MK:00}. It can be found in many applications, such as 
porous media flow \cite{MR2737788},
image processing \cite{MR3394445}, 
biophysics \cite{Andreu:10}.
In this work, we study a second-order difference-quadrature (DQ) scheme on graded meshes for the integral-differential version of the   fractional Laplacian  \cite{MR3941931}
\begin{equation} \label{eq:equation}
  \begin{cases}
    (-\Delta)^{\frac{\alpha}{2}} u(x) = f(x) & x \in \Omega,                    \\
    u(x) = 0                                 & x \in \mathbb{R} \setminus \Omega.
  \end{cases}
\end{equation}
Here  $(-\Delta)^{\frac{\alpha}{2}}$ is the integral-differential   fractional Laplacian, in terms of the Riesz (left and right Riemann-Liouville) fractional derivative \cite{ABBM2018, HuangO:14, MR4043885,MR2796453}, defined by  
\begin{equation} \label{def:operator}
  \begin{split}  
      (-\Delta)^{\frac{\alpha}{2}} u(x) 
      = -\frac{d^2}{dx^2} I^{2-\alpha} u(x) 
      \quad  \text{with} \quad 1<\alpha<2.
  \end{split}
\end{equation}
Note that the Riesz fractional integration can be realized in the form of the Riesz potential
\cite[eq.\,(1.30)]{FractionalDynamics}, namely,
\begin{equation} \label{def:I2-a}
  I^{2-\alpha} u(x) = \int_{\Omega} K(x-y) u(y) dy 
  \quad \text{with} \quad 
  K(x) = \frac{|x|^{1-\alpha}}{2\cos((2-\alpha)\pi/2)\Gamma(2-\alpha)}
  .
\end{equation}

The fractional Laplacian $\left(-\Delta\right)^{\frac{\alpha}{2}}$  can be defined in several equivalent ways \cite{MR3613319, MR4043885} on the whole space $ \mathbb{R}^n$. 
 For example, it can be defined  as a pseudo-differential operator  via the Fourier transform 
\[\mathcal{F}[\left(-\Delta\right)^{\frac{\alpha}{2}}u](\xi)=|\xi|^\alpha\mathcal{F}[u](\xi),\]
or in terms of the hypersingular integral operator 
\begin{gather} \label{def:IFLO}
(-\Delta)^{\frac{\alpha}{2}} u(x)=C_{n,\alpha} \,\,{\rm P.V.} \int_{\mathbb{R}^n}
		{\frac{u\left( x \right) -u\left( y \right)}{\left| x-y \right|^{n+\alpha}}dy}. 
\end{gather}

A key challenge in dealing with the fractional Laplacian arises from the fact that typical solutions $u$ exhibit a weak singularity at the boundary.
For example, the exact (Getoor) solution \cite{Getoor1961,HuangO:14,RosOtonSerra:14} is
\begin{equation*}\label{uf1}
		u(x)=\frac{2^{-\alpha} \varGamma \left( \frac{1}{2} \right)}{\varGamma \left( 1+\frac{\alpha}{2} \right) \varGamma \left( \frac{1+\alpha}{2} \right)} \left[ (x-a)\left( b-x \right) \right] ^{\frac{\alpha}{2}},
\end{equation*}
when $\Omega$ is a bounded interval $(a,b)\subset\mathbb{R}$ and $f\equiv 1$.
Moreover, the model equation \eqref{eq:equation}  can involve a singular/hypersingualr  source term, even if the exact solution $u$ is absolutely continuous \cite{MR4662768,MR4787529,MR4450105}.
These lead to a severe order reduction for many  numerical methods \cite{MR3941931,ChenFourth:14,Chen:14,HuangO:14}.



Among various techniques for approximating {\em integral} version of the fractional Laplacian \eqref{def:IFLO}, numerical quadrature with piecewise linear polynomials (collocation) is the simplest, since it only need a single integration and are much simpler to implement on a computer.
In \cite{HuangO:14}, Huang and Oberman first proposed a
quadrature-based finite difference method for solving the one-dimensional (1D) integral fractional Laplacian.
The method yields a numerical solution with an accuracy of
$\mathcal{O}\left(h^{2-\alpha}\right)$ in the 
discrete $L^\infty(\mathbb{R}^{n})$ norm, provided that the solution is  sufficiently smooth. 
However, this accuracy reduces to  $\mathcal{O}\left(h^{\alpha/2}\right)$ in the case $f\equiv 1$, leading to a boundary singularity in $u$.
Inspired by \cite{HuangO:14},  $\mathcal{O}\left(|\log h|h^{2-\alpha/2}\right)$ convergence  for $0<\alpha<2$ and  $\mathcal{O}\left(h^{\alpha}\right)$ for $\alpha\leq 4/3$, respectively, is proved  \cite{HW:22} in the discrete $L^{\infty}(\mathbb{R}^{n})$ norm on graded meshes for $n=1,2$ by means of a discrete barrier function. Recently,  $\mathcal{O}\left(h^{2-\alpha}\right)$ convergence for $0<\alpha< 1$ is given in \cite{Min:24} by collocation method on graded meshes, where it remains to be proved  for $1<\alpha<2$. It seems that achieving a second-order accurate scheme using piecewise linear polynomials collocation method for fractional Laplacian \eqref{def:IFLO} with $1<\alpha<2$ is not an easy task.

Nevertheless, there are already some important progress for numerically solving {\em integral-differential} version of the   fractional Laplacian  \eqref{def:operator} with $1<\alpha<2$   via the Riesz (left and right Riemann-Liouville) fractional  derivative. Take, for example,  the finite difference method \cite{C-N:12, ChenFourth:14, ChenSecondOdr:14, ChenHighOdr:15, DLIV:19, SLYL:21, SLAT:08, YLT:10, Sousa:15, Tian:15, ZLLBTA:14}, finite element method \cite{MR3941931,BTY:16, Deng:08, Ervin:18}, and spectral method \cite{DZZ:19, WMHK:19}. 
However, these methods may suffer from a severe order reduction when the exact  solution has a weak singularity at the boundary and the source term is singular.

How to design the second-order convergence for the model \eqref{eq:equation} under the low regularity solution has not been addressed in the literature. 
In this work, we combine the finite difference method and  numerical quadrature, called the difference-quadrature (DQ) method,    to approximate  the  {\em differential} and    {\em integral}   operator of  the   fractional Laplacian \eqref{def:operator} on graded meshes. 
The DQ method was proposed by the authors for solving Riesz fractional diffusion equations on uniform mesh \cite{ChenSuperlinearly:13,Chen:14} when the solution is sufficiently smooth with $u\in C^4(\bar{\Omega})$. 
However, the high-order numerical analysis \cite{ChenSuperlinearly:13,Chen:14}, under the low regularity solution, presents significant challenges in the past decades.
To fill in this gap, we  design a grid mapping function and a natural-skew  ordering  to handle  local truncation errors, and construct an appropriate  right-preconditioner for the resulting matrix algebraic equation. By utilizing the H\"older regularity of the data, we prove that the proposed scheme  is  second-order convergence on graded meshes even if the source term is hypersingular. 





\section{The main results}

In this section, we describe  the difference-quadrature scheme   on graded meshes  for fractional Laplacian \eqref{eq:equation}
via the  Riesz fractional derivative
and state our main results about the convergence rate of the numerical solutions.

\subsection{Difference-quadrature scheme}
\label{sec:numformat}

To keep the expressions simple below we assume we are on the interval $\Omega = (0, 2T)$, but everything can be shifted to an arbitrary interval $(a, b)$.
Partition $\Omega$ by the graded mesh
\begin{equation*}
    \pi_h : 0 = x_0 < x_1 < x_2 < \cdots < x_{2N-1} < x_{2N} = 2T ,
\end{equation*}
where we set
\begin{equation} \label{def:xj}
  x_j = \begin{cases}
    T \left(\frac{j}{N}\right)^r      & \text{for  } j=0, 1, ... , N,  \\
    2T - T \left(\frac{2N-j}{N}\right)^r  & \text{for  } j= N+1, N+2, ..., 2N,
  \end{cases}
\end{equation}
with a bounded grading exponent $r\ge 1$ .

Set
\(
  h_j = x_{j} - x_{j-1}
\) for $j = 1, 2, ... ,  2N$ 
and define $h := \frac{1}{N}$.
Let $S_h$ be the space of globally continuous piecewise linear functions on the mesh $\pi_h$
that vanish at $x = 0, 2T$. In this space, we choose as a basis the standard hat functions $\phi_j(x)$ in \cite{Min:24}.
Then, define the piecewise linear interpolant of the true solution \(u\) to be
\begin{equation} \label{def:interp}
  \Pi_hu(x) := \sum_{j=1}^{2N-1} u(x_j) \phi_j(x).
\end{equation}


We discretise \eqref{eq:equation} by replacing \(u(x)\) by a continuous piecewise linear function
$
  u_h(x) := \sum_{j=1}^{2N-1} u_j \phi_j (x) ,
$
whose nodal values \(u_j\) are to be determined by collocation at each mesh point \(x_i\) for \(i = 1, 2,...,2N - 1\):
\begin{equation} \label{def:discrete_equation}
  - D_h^{\alpha} u_h(x_i) := - D_h^2 I^{2-\alpha} u_h(x_i) = f(x_i) =: f_i 
\end{equation}
with the local truncation error 
\begin{equation} \label{def:truncation_error}
  \tau_i := -  D_h^{\alpha} \Pi_h u(x_i) - f(x_i)  \quad\text{for}\quad i = 1, 2, ... , 2N - 1 .
\end{equation}
Here the approximation of second order derivatives can be found in 
\cite[eq.\,(1.14)]{LeVequeFiniteDifference}
\begin{equation} \label{def:Dh2}
  D_h^2 u(x_i) := \frac{2}{h_i + h_{i+1}} \left( \frac{1}{h_i} u(x_{i-1}) - \left( \frac{1}{h_i} + \frac{1}{h_{i+1}} \right) u(x_i) + \frac{1}{h_{i+1}} u(x_{i+1}) \right).
\end{equation}
Moreover, the Riesz fractional derivatives in \eqref{def:operator} can be approximated by
\begin{equation} \label{eq:AU}
  -  D_h^{\alpha} u_h(x_i) 
  =  - D_h^2 I^{2-\alpha} \sum_{j=1}^{2N-1}u_j \phi_j (x_i) 
  = \sum_{j=1}^{2N-1} a_{ij} \, u_j .
\end{equation}
In particular, we have
\begin{equation} \label{eq:AUhat}    
- D_h^{\alpha} \Pi_h u(x_i) =  -\sum_{j=1}^{2N-1} D_h^{\alpha} \phi_j(x_i) \, u(x_j) = \sum_{j=1}^{2N-1} a_{ij} u(x_j).
\end{equation}

The discrete equation \eqref{def:discrete_equation} can be rewritten as the matrix form
\begin{equation} \label{eq:equation_matrix}
  AU = F ,
\end{equation}
where the coefficient matrix $A$ and the vectors $U$ and $F$ are defined by
$A=(a_{ij}) \in \mathbb{R}^{(2N-1) \times (2N-1)}$, $U=(u_1, \cdots, u_{2N-1})^T$ and $F=(f_1, \cdots, f_{2N-1})^T$.
In particular, the coefficient \(a_{ij}\) can be explicitly expressed as
\begin{equation} \label{eq:aij}
  \begin{aligned}
    a_{ij} &= -  D_h^2 I^{2-\alpha} \phi_j(x_i) \\
    &= - \frac{2}{h_{i} + h_{i+1}}
    \left( \frac{1}{h_{i}} \tilde{a}_{i-1,j} - \left( \frac{1}{h_{i}} + \frac{1}{h_{i+1}} \right) \tilde{a}_{i,j} +  \frac{1}{h_{i+1}} \tilde{a}_{i+1, j} \right) 
  \end{aligned}
\end{equation}
with the quadrature coefficients
\begin{equation*} \label{eq:tildeaij}
  \begin{aligned}
    \tilde{a}_{ij} &= I^{2-\alpha} \phi_j(x_i) \\
    &= \frac{\kappa_\alpha}{\Gamma(4-\alpha)}
    \left( \frac{|x_{i}-x_{j-1}|^{3-\alpha}}{h_{j}} -\left( \frac{1}{h_{j}} + \frac{1}{h_{j+1}} \right)|x_i-x_{j}|^{3-\alpha} +  \frac{|x_{i}-x_{j+1}|^{3-\alpha}}{h_{j+1}} \right) ,
  \end{aligned}
\end{equation*}
and $\kappa_\alpha = \frac{1}{2\cos((2-\alpha)\pi/2)} = -\frac{1}{2\cos(\alpha\pi/2)} > 0 $.

\subsection{Regularity of the true solution}
\label{sec:regularity}

For any \(\beta > 0\), 
we use the standard notation \(C^\beta(\bar{\Omega}),  C^\beta(\mathbb{R})\), etc., for Hölder spaces
and their norms and seminorms.
When no confusion is possible, 
we use the notation \(C^\beta (\Omega)\) to refer to \(C^{k,\beta'} (\Omega)\), where \(k\) is the greatest integer such that \(k<\beta\) and where \(\beta' = \beta - k\).
The Hölder spaces \(C^{k, \beta'}(\Omega)\) are defined as the subspaces of \(C^k(\Omega)\) consisting of functions whose \(k\)-th order partial derivatives are locally Hölder continuous \cite[p.\,52]{MR473443} with exponent \(\beta'\) in \(\Omega\).

For convenience, we define
\begin{equation} \label{def:delta}
  \delta(x) = \text{dist}(x, \partial \Omega) = \min\{x, 2T-x\}, \quad x\in (0, 2T),
\end{equation} 
and \( \delta(x, y) = \min\{\delta(x), \delta(y)\}\). 
To bound the derivatives of $u$,  we introduce the following \(\delta\)-dependent H\"older norms.
\begin{definition}[\(\delta\)-dependent H\"older norms \cite{ROSOTON2014275}]
  \label{def:delta-Holder-norm}
  For any $\beta > 0$, 
  write \(\beta = k + \beta'\), where \(k\) is an integer and \(\beta' \in (0, 1]\).
  Given \(\sigma\ge -\beta\), define the seminorm
  \begin{equation*}
    |w|_{\beta}^{(\sigma)} 
    = \sup_{x,y\in \Omega} \left( \delta(x,y)^{\beta+\sigma}\frac{|w^{(k)}(x)-w^{(k)}(y)|}{|x-y|^{\beta'}} \right).
  \end{equation*}
  The $\delta$-dependent H{\"o}lder norm \(\|\cdot\|_{\beta}^{(\sigma)}\) is defined as follows: 
  \begin{equation*}
    \|w\|_{\beta}^{(\sigma)} =
    \begin{cases}  
      \sum\limits_{l=0}^{k} \sup\limits_{x\in \Omega} \left( \delta(x)^{l+\sigma} |w^{(l)}(x)| \right) + |w|_{\beta}^{(\sigma)}  , & \sigma \ge 0, \\
      \|w\|_{C^{-\sigma}(\bar{\Omega})} + \sum\limits_{l=1}^{k} \sup\limits_{x\in \Omega} \left( \delta(x)^{l+\sigma} |w^{(l)} (x)| \right) + |w|_{\beta}^{(\sigma)}, & -1<\sigma < 0.
    \end{cases} 
  \end{equation*}
\end{definition}

\begin{lemma} \cite[pp.\,276-277]{ROSOTON2014275}
  \label{thm:Xacier1.2}
  Let \(f\in L^\infty(\Omega)\) and \(u\) be a solution of \eqref{eq:equation}.
  Then, \(u\in C^{\alpha/2}(\mathbb{R})\) and  \(u/\delta^{\alpha/2}\in C^\sigma(\bar{\Omega})\) for some \(\sigma \in (0, 1-\alpha/2)\), \(\alpha\in (1,2)\) with
  \begin{equation*}
    \|u\|_{C^{\alpha/2}(\mathbb{R})}\le C\|f\|_{L^\infty(\Omega)}
    \quad \text{and} \quad 
    \|u/\delta^{\alpha/2}\|_{C^\sigma(\bar{\Omega})} \le C \|f\|_{L^\infty(\Omega)}
  \end{equation*}
  for some positive constant \(C=C(\Omega, \alpha)\).
\end{lemma}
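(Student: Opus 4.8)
Since this statement is the boundary-regularity theorem of Ros-Oton and Serra, the plan is to follow their barrier-and-rescaling strategy; throughout I write $s=\alpha/2\in(1/2,1)$, so that the two assertions read $u\in C^{s}(\mathbb{R})$ and $u/\delta^{s}\in C^{\sigma}(\bar\Omega)$ with $\sigma<1-s$.

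\emph{Step 1 (sharp boundary growth via barriers).} First I would establish the pointwise bound $|u(x)|\le C\|f\|_{L^\infty(\Omega)}\,\delta(x)^{s}$. On the interval $\Omega=(0,2T)$ this follows from an explicit barrier: the Getoor-type profile $w(x)=[x(2T-x)]^{s}$ (the same function displayed above for $f\equiv1$) satisfies $(-\Delta)^{s}w\equiv c_\alpha>0$ in $\Omega$ and vanishes outside. Hence $W:=(c_\alpha^{-1}\|f\|_{L^\infty})\,w$ is a supersolution and $-W$ a subsolution of \eqref{eq:equation}, and the comparison principle for the integro-differential operator $(-\Delta)^{s}$ gives $|u|\le W\le C\|f\|_{L^\infty(\Omega)}\,\delta^{s}$ after using $w\le C\delta^{s}$.

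\emph{Step 2 (global $C^{s}$ estimate).} Next I would upgrade this pointwise bound to the uniform Hölder estimate. Fixing $x_0\in\Omega$ and setting $R=\delta(x_0)$, I rescale $u_R(z)=u(x_0+Rz)$, which solves $(-\Delta)^{s}u_R=R^{\alpha}f(x_0+R\cdot)$ on $B_1$. The interior Schauder estimate for $(-\Delta)^{s}$, whose nonlocal tail term is controlled by the growth bound of Step 1, gives $R^{s}[u]_{C^{s}(B_{R/2}(x_0))}\le C\|f\|_{L^\infty}R^{s}$, i.e. $[u]_{C^{s}(B_{R/2}(x_0))}\le C\|f\|_{L^\infty}$ with $C$ independent of $x_0$. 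Patching these interior bounds across dyadic scales and matching with $u\equiv0$ outside $\Omega$ (again via the $\delta^{s}$ growth) then produces $\|u\|_{C^{s}(\mathbb{R})}\le C\|f\|_{L^\infty(\Omega)}$, which is the first asserted estimate.

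\emph{Step 3 (fine boundary estimate --- the main obstacle).} The delicate part is $u/\delta^{s}\in C^{\sigma}(\bar\Omega)$: because $\delta^{s}$ degenerates on $\partial\Omega$, the Hölder quotient of the ratio $v:=u/\delta^{s}$ is not visible from the $C^{s}$ bound and must be extracted by a blow-up and compactness argument. I would argue by contradiction: assuming $\|u/\delta^{s}\|_{C^{\sigma}}$ is unbounded along a normalized sequence of solutions, I rescale around the worst pair of points, use the uniform interior estimates of Step 2 in the dyadic annuli $\{2^{-k-1}<\delta<2^{-k}\}$ to extract a locally uniformly convergent subsequence, and pass to a limit solving a constant-coefficient problem in the half-space $\{x>0\}$ with $u\equiv0$ in $\{x<0\}$. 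A Liouville-type classification --- any such half-space solution growing slower than $|x|^{s+\sigma}$ at infinity is a multiple of $(x_+)^{s}$ --- then forces the limiting ratio to be constant, contradicting the normalization. This blow-up step together with the half-space Liouville theorem is the technical heart of the argument, and it is exactly where the restriction $\sigma<1-s=1-\alpha/2$ and the dependence $C=C(\Omega,\alpha)$ enter.
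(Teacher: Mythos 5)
You should first note that the paper itself offers no proof of \Cref{thm:Xacier1.2}: it is quoted directly (combining their Proposition 1.1 and Theorem 1.2) from Ros-Oton and Serra \cite{ROSOTON2014275}, so the only meaningful benchmark is the proof in that cited source. Measured against it, your Steps 1 and 2 follow the original argument closely: the pointwise bound \(|u|\le C\|f\|_{L^\infty(\Omega)}\delta^{\alpha/2}\) via an explicit Getoor-type barrier and the comparison principle, and the global \(C^{\alpha/2}(\mathbb{R})\) bound via interior estimates at scale \(R=\delta(x_0)\) (with the nonlocal tail controlled by the growth bound) patched over dyadic scales, are precisely their barrier-plus-rescaling scheme. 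Your Step 3, however, departs from the cited proof: Ros-Oton and Serra obtain \(u/\delta^{\alpha/2}\in C^\sigma(\bar{\Omega})\) by a Krylov-type oscillation-decay iteration in dyadic boundary annuli, squeezing \(u\) between multiples of \(\delta^{\alpha/2}\) whose oscillation decays geometrically --- a purely quantitative argument for a single solution, with no compactness. The blow-up/compactness argument with a half-line Liouville classification that you outline is the method of their later works on more general nonlocal operators; it is a valid alternative and has the merit of exposing why \(\sigma<1-\alpha/2\) is the natural threshold (growth \(\alpha/2+\sigma<1\) excludes every homogeneous half-line solution except multiples of \(x_+^{\alpha/2}\)), but it is logically heavier, since the uniform estimates along the blow-up sequence, the compactness step, and the Liouville theorem are each nontrivial results that you invoke as black boxes. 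As a sketch your proposal is sound; just be aware that it reproves the citation by the ``modern'' route rather than the one on pp.~276--277 of the reference, and that a self-contained write-up would still owe proofs of the comparison principle, the compactness argument, and the Liouville theorem.
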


In particular, this result says that if \(f\in L^\infty(\Omega)\), then
\begin{equation} \label{eq:reg-u-0}
  |u(x)| \le C \delta(x)^{\alpha/2} \quad \text{for all } x\in \bar{\Omega}.
\end{equation}

\begin{lemma}\cite[Proposition 1.4]{ROSOTON2014275} \label{lmm:regularity}
    Let \(\beta>0\) be such that neither \(\beta\) nor \(\beta+\alpha\) is an integer. Let \(f \in C^\beta (\Omega)\) be such that \( \|f\|_{\beta}^{(\alpha/2)} < \infty\), and \(u \in C^{\alpha/2} (\mathbb{R})\) be a solution of \eqref{eq:equation}. Then, \(u \in C^{\beta+\alpha} (\Omega)\) and
    \begin{equation*}
      \|u\|_{\beta+\alpha}^{(-\alpha/2)} \le C \left( \|u\|_{C^{\alpha/2}(\mathbb{R})} + \|f\|_{\beta}^{(\alpha/2)} \right)
    \end{equation*}
    for some positive constant \(C=C(\Omega, \alpha, \beta)\).
\end{lemma}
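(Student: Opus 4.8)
The plan is to read the two $\delta$-dependent norms of Definition~\ref{def:delta-Holder-norm} as the \emph{scale-invariant} quantities attached to $(-\Delta)^{\alpha/2}$, and to reduce the global weighted estimate to a single \emph{interior} Schauder estimate on a fixed ball. Throughout write $s=\alpha/2\in(\tfrac12,1)$, so that $\beta+\alpha=\beta+2s$ and the claim is the expected gain of $2s$ derivatives; set $K=\lfloor\beta+\alpha\rfloor$ and $\gamma=\beta+\alpha-K\in(0,1)$. The engine is the local statement: if $v\in C^{s}(\mathbb{R})$ solves $(-\Delta)^{s}v=g$ in the unit interval $B_1=(-1,1)$ with $g\in C^{\beta}(B_1)$ and $\beta,\beta+2s\notin\mathbb{Z}$, then
\begin{equation*}
  \|v\|_{C^{\beta+2s}(B_{1/2})}\le C\left([v]_{C^{s}(\mathbb{R})}+\|g\|_{C^{\beta}(B_1)}\right),
\end{equation*}
where only the $C^{s}$-\emph{seminorm} of $v$ enters, because $(-\Delta)^{s}$ annihilates constants.

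To prove this local estimate I would split off the nonlocal tail. Fix a cutoff $\eta\in C_c^\infty(B_1)$ with $\eta\equiv1$ on $B_{1/2}$ and set $w:=(-\Delta)^{-s}(\eta g)$, the whole-line Riesz potential of $\eta g$. Classical Hölder mapping properties of the Riesz potential give $w\in C^{\beta+2s}(\mathbb{R})$ with $\|w\|_{C^{\beta+2s}}\le C\|g\|_{C^\beta(B_1)}$ precisely when $\beta,\beta+2s\notin\mathbb{Z}$. The difference $v-w$ then satisfies $(-\Delta)^{s}(v-w)=0$ in $B_{1/2}$, i.e.\ it is $s$-harmonic there; interior smoothness of $s$-harmonic functions (via the explicit Poisson kernel of the ball) bounds its $C^{\beta+2s}(B_{1/4})$ norm by $\int_{\mathbb{R}}(1+|y|)^{-1-2s}\,|(v-w)(y)-(v-w)(0)|\,dy$. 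Here the genuinely nonlocal point enters: the tail converges and is controlled by $[v]_{C^{s}(\mathbb{R})}+\|g\|_{C^\beta}$, since $v\in C^{s}$ forces $|v(y)-v(0)|\le[v]_{C^s}|y|^{s}$ and $\int_{|y|>1}|y|^{s-1-2s}\,dy<\infty$. Adding the two contributions yields the interior estimate; a blow-up/Liouville argument for $(-\Delta)^{s}$ is an alternative route to the same bound.

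The global estimate is then the matching of dilation exponents. For $x_0\in\Omega$ put $\rho=\delta(x_0)/2$, so $B_\rho(x_0)\subset\Omega$ and $\delta\simeq\rho$ on that ball, and rescale $v(x):=u(x_0+\rho x)$; homogeneity gives $(-\Delta)^{s}v=\rho^{2s}f(x_0+\rho\,\cdot)=:g$ on $B_1$. Feeding $v,g$ into the interior estimate and unwinding the dilation turns the left-hand bounds into $\rho^{\,l}|u^{(l)}(x_0)|$ for $1\le l\le K$ and into $\rho^{\beta+2s}[u^{(K)}]_{C^{\gamma}}$ near $x_0$, while $[v]_{C^s}=\rho^{s}[u]_{C^s}$ and $\|g\|_{C^\beta(B_1)}\le C\rho^{s}\|f\|_{\beta}^{(\alpha/2)}$ (the last using $\delta\simeq\rho$ together with the definition of $\|f\|_\beta^{(\alpha/2)}$, whose weights are exactly the powers $\delta^{\,l+\alpha/2}$). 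Dividing through by $\rho^{s}$ produces precisely the weights $\delta(x_0)^{\,l-\alpha/2}$ and $\delta(x,y)^{\beta+\alpha/2}$ appearing in $\|\cdot\|_{\beta+\alpha}^{(-\alpha/2)}$, while the lowest-order term is the ambient $\|u\|_{C^{\alpha/2}(\bar\Omega)}\le\|u\|_{C^{\alpha/2}(\mathbb{R})}$. For the top seminorm one splits pairs $x,y$ into the near-diagonal case $|x-y|\lesssim\delta(x,y)$, handled by the rescaled interior seminorm, and the far case, handled by the pointwise derivative bounds just obtained and the triangle inequality, the exponents matching because $K+\gamma=\beta+2s$.

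The main obstacle is the interior estimate, and within it the two places where the hypotheses are spent: the nonlocal tail must be controlled by the $C^{s}$-seminorm alone, so that the full $L^\infty$ of $u$ (which is only of size $\delta^{\alpha/2}$ near $\partial\Omega$ by \eqref{eq:reg-u-0}) never appears and the constant $C$ stays uniform as $x_0\to\partial\Omega$; and the Riesz-potential/$s$-harmonic regularity gain is exactly the step that fails when $\beta$ or $\beta+\alpha$ is an integer, which is why those cases are excluded. Once the interior estimate is in hand with the seminorm on its right-hand side, the passage to the weighted global norm is the routine bookkeeping sketched above.
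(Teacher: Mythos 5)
The paper itself gives no proof of this lemma: it is imported verbatim from Ros-Oton and Serra \cite[Proposition 1.4]{ROSOTON2014275}, so there is no internal argument to compare against. What you have written is, in essence, a reconstruction of the proof in that reference: an interior Schauder estimate for $(-\Delta)^{s}$ in which the nonlocal tail is paid for by the global $C^{s}$ information on $u$, followed by the dilation $v(x)=u(x_0+\rho x)$ with $\rho=\delta(x_0)/2$, whose exponent bookkeeping produces exactly the weights $\delta^{\,l-\alpha/2}$ and $\delta^{\,\beta+\alpha/2}$ of \Cref{def:delta-Holder-norm}. The scaling computation ($[v]_{C^{s}}=\rho^{s}[u]_{C^{s}}$, $\|g\|_{C^{\beta}(B_1)}\le C\rho^{s}\|f\|_{\beta}^{(\alpha/2)}$, then divide by $\rho^{s}$) is correct, as is the near/far splitting of the top seminorm, whose exponents cancel precisely because $K+\gamma=\beta+\alpha$.

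Two caveats should be repaired before this counts as a complete argument. First, your interior estimate is false as literally stated: for $v\equiv c$ constant one has $g=0$ and $[v]_{C^{s}(\mathbb{R})}=0$, yet $\|v\|_{C^{\beta+2s}(B_{1/2})}=|c|$. The estimate must be formulated for the seminorms and derivatives of $v$ only (equivalently, applied to $v-v(0)$); this is what your global bookkeeping implicitly uses, since you recover the zeroth-order term from $\|u\|_{C^{\alpha/2}(\mathbb{R})}$ separately, but the local statement needs to say so. Second, in dimension $n=1$ with $\alpha\in(1,2)$ you have $2s>n$, so the ``classical Riesz potential'' must be taken with the analytically continued kernel (exactly the kernel $K$ in \eqref{def:I2-a}), and $w=I^{2s}(\eta g)$ then grows like $|x|^{2s-1}$ at infinity. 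Your tail integral $\int_{\mathbb{R}}(1+|y|)^{-1-2s}\,|(v-w)(y)-(v-w)(0)|\,dy$ still converges, because $(1+|y|)^{-1-2s}(1+|y|)^{2s-1}=(1+|y|)^{-2}$ is integrable, but you only verified the $v$-part of the tail; the $w$-part is not automatic and must be checked against this growth. With these two repairs the sketch is a sound proof outline, matching the strategy of the cited reference.
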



By definition of \(\delta\)-dependent H\"older norms, we have following results obviously.
\begin{lemma} \label{lmm:regularity-u}
  Let $\beta=4-\alpha+\gamma$ with \(0 < \gamma <\alpha-1\).
  Assume that $f\in L^\infty(\Omega) \cap C^\beta(\Omega)$ be such that \( \|f\|_{\beta}^{(\alpha/2)} < \infty\),
  and $u$ be a solution of \eqref{eq:equation}. 
  Then 
  \begin{equation*}
  \begin{split}
    &|u^{(l)}(x)| \le C 
    \delta(x)^{\alpha/2-l}  \quad \text{for  } x \in \Omega \text{  and  } l=0, 1, 2, 3, 4,    \\
    &|f^{(l)}(x)| \le C
    \delta(x)^{-\alpha/2-l} \; \text{for  } x \in \Omega \text{  and  } l=0, 1, 2,
  \end{split}
  \end{equation*}
  for some positive constant \(C=C(\Omega, \alpha, \beta, f)\).
\end{lemma}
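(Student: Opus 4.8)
The plan is to read Lemma~\ref{lmm:regularity-u} as a direct translation of the $\delta$-dependent H\"older estimates of Lemmas~\ref{thm:Xacier1.2} and~\ref{lmm:regularity} into pointwise bounds on the ordinary derivatives of $u$ and $f$. First I would verify that the hypotheses feed the machinery: with $\beta = 4-\alpha+\gamma$ and $0<\gamma<\alpha-1$ one has $1<\alpha<2$, so neither $\beta$ nor $\beta+\alpha$ is an integer (this needs the explicit range of $\gamma$, and is the one genuine arithmetic check to run), and $f\in L^\infty(\Omega)\cap C^\beta(\Omega)$ with $\|f\|_\beta^{(\alpha/2)}<\infty$ is exactly what Lemma~\ref{lmm:regularity} asks for. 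Lemma~\ref{thm:Xacier1.2} then gives $u\in C^{\alpha/2}(\mathbb{R})$, so Lemma~\ref{lmm:regularity} applies and yields $u\in C^{\beta+\alpha}(\Omega)$ with
\begin{equation*}
  \|u\|_{\beta+\alpha}^{(-\alpha/2)} \le C\left( \|u\|_{C^{\alpha/2}(\mathbb{R})} + \|f\|_\beta^{(\alpha/2)} \right) < \infty.
\end{equation*}

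Next I would unpack the definition of the norm $\|u\|_{\beta+\alpha}^{(-\alpha/2)}$ to extract the bounds on $u^{(l)}$. Here $\beta+\alpha = 4+\gamma$, so $k=4$ and the relevant exponent is $\sigma=-\alpha/2\in(-1,0)$, which selects the second branch of Definition~\ref{def:delta-Holder-norm}. That branch bounds $\|u\|_{C^{\alpha/2}(\bar\Omega)}$ together with $\sup_{x}\big(\delta(x)^{l-\alpha/2}|u^{(l)}(x)|\big)$ for $l=1,2,3,4$; since each of these suprema is finite, rearranging gives $|u^{(l)}(x)|\le C\,\delta(x)^{\alpha/2-l}$ for $l=1,2,3,4$. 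For $l=0$ I would instead invoke the pointwise consequence already recorded in~\eqref{eq:reg-u-0}, namely $|u(x)|\le C\delta(x)^{\alpha/2}$, which covers the $l=0$ case and matches the claimed exponent $\alpha/2-0$.

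The bounds on $f$ are obtained the same way but more directly, since $f$ is the given datum rather than the solution. The hypothesis $\|f\|_\beta^{(\alpha/2)}<\infty$ with $\beta=4-\alpha+\gamma$ uses the norm $\|\cdot\|_\beta^{(\alpha/2)}$ in its $\sigma=\alpha/2>0$ (first) branch; unpacking that branch gives finiteness of $\sup_x\big(\delta(x)^{l+\alpha/2}|f^{(l)}(x)|\big)$ for $l=0,1,\dots,\lfloor\beta\rfloor$, and since $\lfloor\beta\rfloor=\lfloor 4-\alpha+\gamma\rfloor\ge 2$ (using $0<\gamma<\alpha-1<1$, so $4-\alpha+\gamma\in(3,4)$), the three values $l=0,1,2$ are included. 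Rearranging yields $|f^{(l)}(x)|\le C\,\delta(x)^{-\alpha/2-l}$ for $l=0,1,2$, as claimed.

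I expect no deep obstacle here, which is why the authors say the result follows ``obviously''; the only place where care is required is the bookkeeping of exponents and integer floors, specifically confirming that the chosen $\beta$ keeps $k=4$ derivatives available for $u$ and at least $2$ for $f$, and that the $\sigma$ values land in the intended branches of Definition~\ref{def:delta-Holder-norm}. The main subtlety is simply matching the two different sign conventions for $\sigma$ ($-\alpha/2$ for $u$ and $+\alpha/2$ for $f$) to the two branches of the norm, so that the $\delta$-weights come out with the correct signs $\alpha/2-l$ and $-\alpha/2-l$ respectively.
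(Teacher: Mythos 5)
Your proposal is correct and follows essentially the same route as the paper's proof: invoke \Cref{thm:Xacier1.2} to get $u\in C^{\alpha/2}(\mathbb{R})$, feed this into \Cref{lmm:regularity}, unpack the two branches of \Cref{def:delta-Holder-norm} (with $\sigma=-\alpha/2$ for $u$, giving $l=1,2,3,4$, and $\sigma=+\alpha/2$ for $f$, giving $l=0,1,2$), and cover $l=0$ for $u$ via \eqref{eq:reg-u-0}. One arithmetic slip: since $4-\alpha\in(2,3)$ and $\gamma<\alpha-1$, you have $\beta=4-\alpha+\gamma\in(2,3)$, not $(3,4)$ as you claimed (the paper itself records $2<\beta<3$); this is harmless because $\lfloor\beta\rfloor=2$ still makes exactly the derivatives $l=0,1,2$ of $f$ available in the first branch of the norm.
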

\begin{proof}
  Our hypotheses imply that \(2<\beta<3\), and \(4<\beta+\alpha<5\).
  By \Cref{lmm:regularity}, we have
  \begin{equation*}
    \|u\|_{\beta+\alpha}^{(-\alpha/2)} \le C \left( \|u\|_{C^{\alpha/2}(\mathbb{R})} + \|f\|_{\beta}^{(\alpha/2)}  \right).
  \end{equation*}
  By \Cref{def:delta-Holder-norm} and \Cref{thm:Xacier1.2}, it yields
  \begin{equation*}
    \sum_{l=1}^{4} \sup_{x\in \Omega} \left( \delta(x)^{l-\alpha/2} \left|u^{(l)}(x)\right| \right) \le C \left( \|f\|_{L^\infty(\Omega)} + \|f\|_{\beta}^{(\alpha/2)}  \right),
  \end{equation*}
  which is desired result \(l=1, 2, 3, 4\). 
  The case \(l = 0\) is covered by \eqref{eq:reg-u-0}.
  
  The second inequality can be obtained by \Cref{def:delta-Holder-norm}, namely,
  \begin{equation*}
    \sum_{l=0}^{2} \sup_{x\in \Omega} \left( \delta(x)^{l+\alpha/2} |f^{(l)}(x)| \right) \le \|f\|_{\beta}^{(\alpha/2)}.
  \end{equation*}
  The proof is completed.
\end{proof}

 

\subsection{Main results}
\label{sec:main}

The main results of this paper consist of the following theorems, which will be proved in \Cref{sec:proof-truncation-error} and \Cref{sec:proof-convergence}, respectively.

\begin{theorem}[Local Truncation Error] \label{thm:truncation-error}
Let $\alpha \in (1,2)$ and  $f\in L^\infty(\Omega) \cap C^\beta(\Omega)$ be such that \( \|f\|_{\beta}^{(\alpha/2)} < \infty\), where $\beta=4-\alpha+\gamma$ with \(0<\gamma<\alpha-1\).
Then,
\begin{equation*} 
  \begin{aligned}
    |\tau_i| &= | - D_h^{\alpha} \Pi_hu(x_i) - f(x_i) | \\
    &\le  C  h^{\min\{\frac{r\alpha}{2}, 2\}} \delta(x_i)^{-\alpha}
        + C(r-1) h^2 (T-\delta(x_{i}) + h_N)^{1-\alpha} , \quad 1\le i \le 2N-1
  \end{aligned}
\end{equation*}
for some positive constant \(C=C(\Omega, \alpha, \beta, r, f)\).
\end{theorem}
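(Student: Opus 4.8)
The plan is to split $\tau_i$ according to the two approximations that enter $-D_h^{\alpha}\Pi_h u(x_i) = -D_h^2 I^{2-\alpha}\Pi_h u(x_i)$: replacing the exact second derivative by the divided difference $D_h^2$, and replacing $u$ by its interpolant $\Pi_h u$ inside the Riesz potential $I^{2-\alpha}$. Writing $v := I^{2-\alpha}u$, the operator definition \eqref{def:operator} gives $f(x_i) = -v''(x_i)$, and since the closed-form coefficients $\tilde a_{ij}$ evaluate $I^{2-\alpha}\Pi_h u$ exactly, I would decompose
\begin{equation*}
  \tau_i = \underbrace{v''(x_i) - D_h^2 v(x_i)}_{=:\,\tau_i^{FD}} \;+\; \underbrace{\bigl(-D_h^2 I^{2-\alpha}(\Pi_h u - u)(x_i)\bigr)}_{=:\,\tau_i^{I}},
\end{equation*}
where $e := \Pi_h u - u$ is the piecewise-linear interpolation error. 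The advantage of routing the interpolation error through the weakly singular $I^{2-\alpha}$ (rather than through the hypersingular form) is that $\tau_i^{I}$ stays a genuinely integrable object, with no principal-value subtlety.

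For $\tau_i^{FD}$ I would Taylor-expand $D_h^2$ on the nonuniform mesh, producing the leading term $\tfrac13(h_{i+1}-h_i)v'''(x_i)$ and a remainder controlled by $\max(h_i,h_{i+1})^2\sup_{[x_{i-1},x_{i+1}]}|v^{(4)}|$. Because $v'' = -f$, \Cref{lmm:regularity-u} gives $|v^{(l+2)}(x)| = |f^{(l)}(x)|\le C\delta(x)^{-\alpha/2-l}$ for $l=0,1,2$, so every derivative factor is a negative power of $\delta$. Combining this with the graded-mesh estimates $h_j\asymp r\,h\,\delta(x_j)^{1-1/r}$ and $|h_{j+1}-h_j|\asymp r(r-1)\,h^2\,\delta(x_j)^{1-2/r}$ — the latter carrying exactly the factor $(r-1)$ that vanishes on the uniform mesh — reduces $\tau_i^{FD}$ to elementary powers of $h$ and $\delta(x_i)$ dominated by the right-hand side of the theorem. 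This part is essentially routine once \Cref{lmm:regularity-u} is in hand.

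The crux is $\tau_i^{I}$. Here I would move $D_h^2$ onto the kernel and write
\begin{equation*}
  \tau_i^{I} = -\frac{\kappa_\alpha}{\Gamma(2-\alpha)}\int_\Omega \bigl[D_h^2|x_i-\cdot|^{1-\alpha}\bigr](y)\,e(y)\,dy,
\end{equation*}
splitting $\Omega$ into a near field (the few cells abutting $x_i$) and a far field. On the near field I would use $e(x_j)=0$ at every node together with the \emph{exact} hat-function integrals $\tilde a_{ij}$ and the local bound $|e(y)|\le C h_i^2\sup_{[x_{i-1},x_{i+1}]}|u''|$, where $|u''(x)|\le C\delta(x)^{\alpha/2-2}$ by \Cref{lmm:regularity-u}; the order-$\alpha$ scaling of the operator at length scale $\delta(x_i)$ is what produces the local weight $\delta(x_i)^{-\alpha}$. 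On the far field the divided difference of the kernel is comparable to $\partial_{xx}|x_i-y|^{1-\alpha}\asymp|x_i-y|^{-1-\alpha}$, so $\tau_i^{I}$ is controlled by a weighted sum $\sum_j |x_i-x_j|^{-1-\alpha}h_j^2\,\delta(x_j)^{\alpha/2-2}$ of cell errors. Summing this against the graded mesh produces both terms of the theorem: the cell touching the boundary contributes $|e|\asymp u(x_1)\asymp h^{r\alpha/2}$, which competes with the generic second-order interior contribution $h^2$ to give the factor $h^{\min\{r\alpha/2,2\}}$, while the coupling of interior collocation points to the boundary-layer interpolation errors through the kernel yields the nonuniformity term $(r-1)h^2(T-\delta(x_i)+h_N)^{1-\alpha}$, in which $T-\delta(x_i)=|x_i-T|$ is the distance to the midpoint and $h_N$ acts as a regularising floor.

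The main obstacle is precisely this far-field summation for $\tau_i^{I}$: it couples every cell to $x_i$ through a kernel that is singular in the near field and only integrably decaying far away, so one must track the graded-mesh scaling uniformly in $i$, split the sum carefully to avoid a spurious logarithmic loss near the boundary where the weight $\delta^{\alpha/2-2}$ is borderline, and separate the near-boundary regime (grading exponent $r\alpha/2$) from the interior regime (exponent $2$) in order to extract exactly $\min\{r\alpha/2,2\}$. By contrast, the consistency term $\tau_i^{FD}$ and the near field of $\tau_i^{I}$ are comparatively direct given the regularity in \Cref{lmm:regularity-u}.
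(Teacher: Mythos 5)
Your decomposition of $\tau_i$ is exactly the paper's \eqref{eq:truncerrordepart}, and your treatment of $\tau_i^{FD}$ mirrors \Cref{lmm:trunerror2}; both are fine. The gap is in $\tau_i^{I}$, i.e.\ the paper's $R_i$ in \eqref{eq:Ri}, and it is fatal in the \emph{middle field}: the cells with $\lceil i/2\rceil \lesssim j \lesssim 2i$, including those adjacent to $x_i$. There your argument takes absolute values cell by cell, bounding $|D_h^2 K_y(x_i)| \simeq |x_i-y|^{-1-\alpha}$ (\Cref{lmm:Dh2Kyxi}) against $|e(y)|\le C h^2\delta(y)^{\alpha/2-2/r}$. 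Summing over $2\le |j-i|\lesssim i$, with $h_j\simeq h_i \simeq h\,x_i^{1-1/r}$, $|x_i-x_j|\simeq |j-i|\,h_i$ and $\delta(x_j)\simeq x_i$, gives
\begin{equation*}
\sum_{2\le |j-i|\lesssim i} h_j\,\bigl(|j-i|\,h_i\bigr)^{-1-\alpha}\, h^2 x_i^{\alpha/2-2/r}
\;\simeq\; h_i^{-\alpha}\, h^2\, x_i^{\alpha/2-2/r}
\;\simeq\; h^{2-\alpha}\, x_i^{-\alpha/2+(\alpha-2)/r},
\end{equation*}
which at interior points ($x_i\simeq T$) is $O(h^{2-\alpha})$, not $O(h^2)$: an algebraic loss of $h^{-\alpha}$, not the ``spurious logarithmic loss'' you anticipate. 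No re-splitting of this sum can repair it, because already on the uniform mesh ($r=1$) the term-by-term absolute values genuinely sum to order $h^{2-\alpha}$ near $x_i$; the second-order bound holds only through cancellation between the contributions of neighbouring cells, which your cell-wise estimate destroys. The same objection applies to your near-field claim: next to $x_i$ the operative length scale is $h_i$, not $\delta(x_i)$, so the crude bound there is $\simeq h_i^{2-\alpha}\delta(x_i)^{\alpha/2-2}$, again $O(h^{2-\alpha})$ at interior points rather than $h^2\delta(x_i)^{-\alpha}$.

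The machinery of Sections 3.2--3.3 of the paper exists precisely to capture this cancellation, and it is the idea missing from your proposal. In the middle field, $R_i$ is reorganised by the natural-skew ordering \eqref{eq:depart-Ri}: the skew quotient $S_{ij}$ in \eqref{def:Sij} differences the triple $T_{i-1,j-1}$, $T_{i,j}$, $T_{i+1,j+1}$ diagonally, and the grid mapping functions of \Cref{def:gridmapfunc} exploit the approximate self-similarity of the graded mesh to realise this triple as three evaluations of a single smooth function of the observation point, $P_{i,j}^\theta(x)$ in \eqref{def:Pj-itheta-jlN}. Differencing that composite keeps the kernel at the \emph{integrable} power: $|D_h^2 P_{i,j}^\theta(x_i)|\le C h_j^3 |y_j^\theta-x_i|^{1-\alpha}x_i^{\alpha/2-4}$ (\Cref{lmm:d2Pj-itle}), a gain of two powers of $|y-x_i|$ over your $|x_i-y|^{-1-\alpha}$, and this is what yields the bound $C h^2 x_i^{-\alpha/2-2/r}$ in \Cref{lmm:estimate-Sij}. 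Your vertical treatment is sound only where the paper also uses it, namely in $I_1$ and $I_5$ (\Cref{lmm:Ri-I1,lmm:Ri-I5-1}), where $|x_i-y|\simeq \max\{x_i,y\}$. Finally, your attribution of the term $(r-1)h^2(T-\delta(x_i)+h_N)^{1-\alpha}$ to coupling with the boundary layer is off: in the paper it originates from the kink of the graded mesh at the centre $x=T$ (the cells $j=N, N+1$, Case 1 of \Cref{lmm:estimate-Sij} and Case 2 of \Cref{lmm:d2Pj-itle}), where the grid mapping functions lose smoothness and only a factor $(r-1)$ survives.
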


\begin{theorem}[Global Error]\label{thm:convergence}
Let $\alpha \in (1,2)$ and  $f\in L^\infty(\Omega) \cap C^\beta(\Omega)$ be such that \( \|f\|_{\beta}^{(\alpha/2)} < \infty\), where $\beta=4-\alpha+\gamma$ with \(0<\gamma<\alpha-1\).
Let \(u_i\) be the approximate solution of \(u(x_i )\) computed by the discretization scheme \eqref{def:discrete_equation}. Then,
\begin{equation*} 
  \max_{1\le i \le 2N-1} |u_i - u(x_i)| \le C h^{\min\{\frac{r\alpha}{2}, 2\}}
\end{equation*}
for some positive constant \(C=C(\Omega, \alpha, \beta, r, f)\).  
\end{theorem}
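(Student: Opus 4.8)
The plan is to convert the pointwise truncation estimate of \Cref{thm:truncation-error} into the global bound through a stability analysis of the linear system \eqref{eq:equation_matrix}. Let $\hat U = (u(x_1),\dots,u(x_{2N-1}))^T$ denote the vector of exact nodal values and $E = U - \hat U$ the nodal error. Subtracting the scheme $\sum_j a_{ij} u_j = f_i$ (from \eqref{def:discrete_equation} and \eqref{eq:AU}) from the identity $\sum_j a_{ij} u(x_j) = f_i + \tau_i$ (from \eqref{def:truncation_error} and \eqref{eq:AUhat}) gives the error equation $AE = -\tau$, where $\tau = (\tau_1,\dots,\tau_{2N-1})^T$. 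A naive bound $\|E\|_\infty \le \|A^{-1}\|_\infty\|\tau\|_\infty$ is hopeless: by \eqref{def:xj} the node nearest the boundary satisfies $\delta(x_1) = T h^r$, so the leading term of $\tau_1$ is of size $h^{\min\{r\alpha/2,2\}}\delta(x_1)^{-\alpha} \sim h^{-r\alpha/2}$, which \emph{diverges}. The singular truncation error must therefore be weighted against the decay of the discrete solution operator, and this is precisely the role of the right-preconditioner announced in the abstract.

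First I would record the structural properties of $A$. From the closed form \eqref{eq:aij} one checks that $A$ has positive diagonal and nonpositive off-diagonal entries and is (weakly) diagonally dominant, so that $A$ is a monotone M-matrix with $A^{-1}\ge 0$ entrywise; this yields a discrete comparison principle, the workhorse of the argument. The second ingredient is a right-preconditioner $P\ge 0$ whose columns are nodal samples of smooth profiles that reproduce the boundary behaviour $\delta(x)^{\alpha/2}$ of the true solution (cf. \eqref{eq:reg-u-0}). The point is that $\tilde A := AP$ then amounts to applying $-D_h^\alpha$ to these regular barrier profiles, so its entries are bounded uniformly in $N$ with a controlled sign pattern; in effect $P$ transfers the boundary-layer weight out of the operator and makes a uniform stability estimate for $\tilde A$ available.

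With these tools in hand, the convergence follows by a discrete barrier argument. Writing $E = -P\tilde A^{-1}\tau$, I would construct a nonnegative barrier vector $\Psi$ satisfying $(\tilde A\Psi)_i \ge |\tau_i|$ for every $i$ together with $\|P\Psi\|_\infty \le C h^{\min\{r\alpha/2,2\}}$; the M-matrix comparison principle then forces $|E_i| \le (P\Psi)_i$ componentwise and closes the proof. To build $\Psi$ and verify its bound I would split $\tau$ according to \Cref{thm:truncation-error} into the singular boundary term $h^{\min\{r\alpha/2,2\}}\delta(x_i)^{-\alpha}$ and the $(r-1)h^2$ interior term, and estimate each against the (preconditioned) discrete Green's function. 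The decisive and most delicate point is the boundary layer: although the individual truncation errors there are as large as $h^{-r\alpha/2}$, the solution operator contributes the factor $\delta(x)^{\alpha/2}$, and the product $\delta^{\alpha/2}\cdot\delta^{-\alpha} = \delta^{-\alpha/2}$ is integrable since $\alpha/2 < 1$; summing this integrable singularity over the graded mesh recovers exactly $h^{\min\{r\alpha/2,2\}}$. The main obstacle is thus to make this heuristic rigorous on the nonuniform mesh, namely to prove a uniform (in $N$) bound for $\tilde A^{-1}$ together with a sharp enough decay estimate for the discrete Green's function, so that the enormous near-boundary truncation errors are absorbed without loss of order.
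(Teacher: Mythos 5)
Your overall architecture is the paper's: the error equation $A\epsilon=\tau$, the M-matrix structure of $A$ (\Cref{lmm:AisM}), a right preconditioner, and a comparison/maximum-principle step (\Cref{subsec:proof-convergence}); your diagnosis that the naive bound $\|A^{-1}\|_\infty\|\tau\|_\infty$ diverges because $|\tau_1|\sim h^{-r\alpha/2}$ is also exactly the paper's motivation. The genuine gap is your concrete choice of preconditioner: columns sampling profiles with $\delta^{\alpha/2}$ boundary behaviour cannot deliver the domination you need. If $\psi$ is such a (Getoor-type) profile, then $A$ applied to its nodal vector equals the nodal values of $(-\Delta)^{\alpha/2}\psi$, which are $O(1)$, \emph{plus the truncation error of $\psi$ itself}, which by \Cref{thm:truncation-error} is again as large as $h^{\min\{r\alpha/2,2\}}\delta(x_i)^{-\alpha}\sim h^{-r\alpha/2}$ in the boundary layer and of indefinite sign. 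Consequently, under the constraint $\|P\Psi\|_\infty\le Ch^{\min\{r\alpha/2,2\}}$, the inequality $(AP\Psi)_i\ge|\tau_i|\sim h^{\min\{r\alpha/2,2\}}\delta(x_i)^{-\alpha}$ fails precisely at the boundary layer—the point you yourself call decisive. Your stated design goal that $\tilde A=AP$ have uniformly bounded entries is also the wrong target: a matrix with $O(1)$ rows acting on a vector of size $h^{\min\{r\alpha/2,2\}}$ can never dominate a right-hand side of size $h^{-r\alpha/2}$. Finally, your fallback—sharp decay of the discrete Green's function—is left entirely unproven, and the paper never establishes (or needs) such estimates.

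What actually works, and what the paper does, is a barrier that does \emph{not} vanish at the boundary: $P=\lambda I+\mu G$ with $G=\text{diag}(\delta(x_1),\dots,\delta(x_{2N-1}))$ in \eqref{def:G}, i.e.\ the profile $\lambda+\mu\,\delta(x)$. Because the discrete functions are extended by zero outside $\Omega$, the constant component $\lambda$ corresponds to the plateau $\sum_j\phi_j$, which drops from $1$ to $0$ across the first cell of width $\sim Th^r$; \Cref{lmm:AisM} shows the discrete operator converts this jump into row sums
\begin{equation*}
  \sum_{j=1}^{2N-1}a_{ij}\;\ge\;C_A\left(x_i^{-\alpha}+(2T-x_i)^{-\alpha}\right)\;\gtrsim\;\delta(x_i)^{-\alpha},
\end{equation*}
exactly the singular profile of $\tau_i$ and with a favourable sign. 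The component $\mu G$ exploits the kink of $\delta$ at $x=T$: by \Cref{lmm:AGhasSingularity} it contributes the positive term $C_B(T-\delta(x_i)+h_N)^{1-\alpha}$ needed to absorb the $(r-1)h^2(T-\delta(x_i)+h_N)^{1-\alpha}$ part of $\tau_i$, at the price of a weaker negative boundary singularity $-C_{\tilde B}\delta(x_i)^{1-\alpha}$, which the $\lambda$-part dominates; \Cref{thm:ALGisM} fixes $\lambda,\mu$ so that $\sum_j b_{ij}\ge C_A\left(\delta(x_i)^{-\alpha}+(T-\delta(x_i)+h_N)^{1-\alpha}\right)$. Dividing $|\tau_{i_0}|$ by this row sum then cancels the singularities pointwise, and a short diagonal-dominance argument at the maximizing index finishes the proof—no Green's function information enters. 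Your integrability heuristic $\delta^{\alpha/2}\cdot\delta^{-\alpha}=\delta^{-\alpha/2}$ is a correct intuition for why the theorem is true, but the mechanism that makes the proof close is the $\delta^{-\alpha}$ blow-up generated by the \emph{nonvanishing} barrier component, not the $\delta^{\alpha/2}$ decay of the solution operator.
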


\section{Local Truncation Error}
\label{sec:proof-truncation-error}
For  convenience, we use the notation \( \simeq \)  ,
where \(x \simeq y\) 
means that \(c x \le y \le C x\) 
for some positive constants \(c\), \(C\) independent of \(h\).

For \(1\le j \le 2N\), we define the combination of adjacent grid points as
\begin{equation} \label{def:yjt}
  y_j^\theta = (1-\theta)x_{j-1} + \theta x_{j}, \quad \theta \in (0,1).
\end{equation}
Then, using the definition of grid points $\{x_j\}$ in \eqref{def:xj}, it follows that
\begin{lemma} \label{lmm:hilexi}
  Let $h=\frac{1}{N}$ and \(\delta(x_j)\) be defined by \eqref{def:delta}.
  Then we have
  \begin{equation*}
    h_j \simeq  h_{j+1} \simeq  h \delta(x_j)^{1-1/r}, \quad 1\le j \le 2N-1 ,
  \end{equation*}
  \[
    \delta(x_j) \simeq \delta(x_{j+1}) \simeq \delta(y_{j+1}^\theta),\;\, 1\le j \le 2N-2  .
  \]
\end{lemma}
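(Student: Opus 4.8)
The plan is to exploit the reflection symmetry of the mesh about the midpoint $x=T$ and reduce everything to an elementary estimate on the left half. Since $x_{2N-j}=2T-x_j$ by \eqref{def:xj}, one checks directly that $h_{2N-j+1}=h_j$ and $\delta(x_{2N-j})=\delta(x_j)$, so it suffices to prove both claims for $1\le j\le N$. On this range $x_j=T(j/N)^r\le T$, hence $\delta(x_j)=x_j=T(j/N)^r$, and
\[
  h_j=x_j-x_{j-1}=TN^{-r}\bigl(j^r-(j-1)^r\bigr).
\]

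The key quantitative input is the two-sided bound
\[
  j^r-(j-1)^r\simeq j^{r-1},\qquad 1\le j\le N,
\]
with constants depending only on $r$. First I would obtain the upper bound from the mean value theorem, writing $j^r-(j-1)^r=r\xi^{r-1}$ with $\xi\in(j-1,j)$ and using $\xi^{r-1}\le j^{r-1}$ since $r\ge 1$. For the lower bound the naive estimate $r(j-1)^{r-1}$ degenerates at $j=1$, so instead I would study the ratio $g(j)=\bigl(j^r-(j-1)^r\bigr)/j^{r-1}=j\bigl(1-(1-1/j)^r\bigr)$ and, via the substitution $s=1/j\in(0,1]$, recognise it as $\bigl(1-(1-s)^r\bigr)/s$, a continuous positive function on $(0,1]$ with limit $r$ as $s\to 0$ and value $1$ at $s=1$; it is therefore bounded away from $0$ and $\infty$ by constants depending only on $r$. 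This settles the estimate uniformly, including the boundary index $j=1$.

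Granting this, the identity $h\,\delta(x_j)^{1-1/r}=T^{1-1/r}N^{-r}j^{r-1}$ (using $h=1/N$) yields $h_j\simeq h\,\delta(x_j)^{1-1/r}$, while $(j+1)^{r-1}\simeq j^{r-1}$ — from $1\le(1+1/j)^{r-1}\le 2^{r-1}$ — gives $h_j\simeq h_{j+1}$. For the second line I would first note that the midpoint $T=x_N$ is itself a grid point, so $\delta$ is monotone on every subinterval $[x_j,x_{j+1}]$; on the left half $\delta(x)=x$ is increasing, whence $\delta(x_j)\le\delta(y_{j+1}^\theta)\le\delta(x_{j+1})$ for the convex combination $y_{j+1}^\theta\in(x_j,x_{j+1})$ from \eqref{def:yjt}. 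Combined with $\delta(x_{j+1})/\delta(x_j)=((j+1)/j)^r\in[1,2^r]$, this delivers $\delta(x_j)\simeq\delta(x_{j+1})\simeq\delta(y_{j+1}^\theta)$.

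The main obstacle is the lower bound in $j^r-(j-1)^r\simeq j^{r-1}$ at the smallest index, where differentiation alone fails; the ratio-and-substitution argument is precisely what keeps the constants uniform in $j$ and $N$ while depending only on $r$. Everything else is bookkeeping built on the explicit mesh formula and the reflection symmetry.
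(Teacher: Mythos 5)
Your proposal is correct, and it takes essentially the approach the paper intends: the paper states \Cref{lmm:hilexi} without proof as a direct consequence of the mesh definition \eqref{def:xj}, and your argument (reduction by the reflection symmetry $x_{2N-j}=2T-x_j$ plus the uniform two-sided bound $j^r-(j-1)^r\simeq j^{r-1}$) is precisely that direct verification carried out in full. Your compactness treatment of the ratio $\bigl(1-(1-s)^r\bigr)/s$ correctly handles the only delicate point (the lower bound at $j=1$), so there is no gap.
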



We next give a detailed analysis of the local truncation error.


\subsection{Proof of \Cref{thm:truncation-error}}

The local truncation error \eqref{def:truncation_error} can be expressed
\begin{equation} \label{eq:truncerrordepart}
  \begin{aligned}
    \tau_i  &= -D_h^2 I^{2-\alpha} \Pi_h u(x_i) + \frac{d^2}{dx^2} I^{2-\alpha} u(x_i)   \\
     & = D_h^2 I^{2-\alpha} \left(u - \Pi_h u \right)(x_i) - \left( D_h^2 - \frac{d^2}{dx^2} \right) I^{2-\alpha} u(x_i)  .
  \end{aligned}
\end{equation}

We estimate each component of this partition.

  \begin{theorem} \label{lmm:trunerror2}
    There exists a constant $C$ such that
    \begin{equation*}
      \left| \left(D_h^2 - \frac{d^2}{dx^2}\right) I^{2-\alpha}u (x_i) \right| 
      \le C h^2 \delta(x_i)^{-\alpha/2-2/r} .
    \end{equation*}
  \end{theorem}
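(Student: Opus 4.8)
The plan is to set $w := I^{2-\alpha}u$ and exploit that, by \eqref{eq:equation} together with \eqref{def:operator}, one has $\frac{d^2}{dx^2}I^{2-\alpha}u = -f$, so that $w'' = -f$, $w''' = -f'$ and $w^{(4)} = -f''$ on $\Omega$. Hence the derivative bounds I need are exactly those of \Cref{lmm:regularity-u}: $|w''(x)|\le C\delta(x)^{-\alpha/2}$, $|w'''(x)|\le C\delta(x)^{-\alpha/2-1}$ and $|w^{(4)}(x)|\le C\delta(x)^{-\alpha/2-2}$. This converts the claim into a purely finite-difference truncation estimate for $D_h^2$ acting on a function whose derivatives blow up like a known power of $\delta$ toward $\partial\Omega$.

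First I would write the exact remainder of the nonuniform second difference \eqref{def:Dh2}. Using Taylor's formula with integral remainder for $w$ at $x_{i\pm1}$ about $x_i$, the first-derivative contributions cancel and the constant term reproduces $w''(x_i)$, leaving
\[
\Bigl(D_h^2 - \tfrac{d^2}{dx^2}\Bigr)w(x_i) = \frac{2}{h_i+h_{i+1}}\Bigl[\frac{1}{h_i}\int_{x_{i-1}}^{x_i}(s-x_{i-1})\bigl(w''(s)-w''(x_i)\bigr)\,ds + \frac{1}{h_{i+1}}\int_{x_i}^{x_{i+1}}(x_{i+1}-s)\bigl(w''(s)-w''(x_i)\bigr)\,ds\Bigr].
\]

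For the interior nodes $2\le i\le 2N-2$, whose stencil $[x_{i-1},x_{i+1}]$ stays compactly inside $\Omega$ so that $w\in C^4$ there, I would expand $w''(s) = w''(x_i)+(s-x_i)w'''(x_i)+\tfrac12(s-x_i)^2 w^{(4)}(\xi)$. The odd term integrates against the two weights to give the leading contribution $\tfrac13(h_{i+1}-h_i)\,w'''(x_i)$, while the quadratic term yields a remainder of size $O\bigl((h_i^2+h_{i+1}^2)\sup_{[x_{i-1},x_{i+1}]}|w^{(4)}|\bigr)$. Then I invoke \Cref{lmm:hilexi}, which gives $h_i\simeq h_{i+1}\simeq h\delta(x_i)^{1-1/r}$ and, by a short discrete-second-difference computation on the grid map $x_j=T(j/N)^r$, $|h_{i+1}-h_i|\le C(r-1)h^2\delta(x_i)^{1-2/r}$; together with $\delta(s)\simeq\delta(x_i)$ on the stencil, both pieces are bounded by $Ch^2\delta(x_i)^{-\alpha/2-2/r}$.

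The main obstacle is the boundary nodes $i=1$ and $i=2N-1$, where the stencil reaches the endpoint and $w'''\sim\delta^{-\alpha/2-1}$, $w^{(4)}\sim\delta^{-\alpha/2-2}$ fail to be integrable, so the interior Taylor argument is unavailable. Here I would estimate the remainder representation directly with the least-differentiated bound $|w''(s)|\le C\delta(s)^{-\alpha/2}$, which is integrable near $\partial\Omega$ precisely because $\alpha/2<1$; since the weights $(s-x_{i-1})$ and $(x_{i+1}-s)$ vanish at the relevant endpoint, the two integrals are finite and, using $h_1\simeq h_2\simeq x_1$ and $\delta(s)\simeq x_1$ on $[x_0,x_2]$, are controlled by $C\delta(x_1)^{-\alpha/2}=Ch^2\delta(x_1)^{-\alpha/2-2/r}$ (recalling $\delta(x_1)=Th^r$), which matches the claimed bound; the node $i=2N-1$ is handled symmetrically. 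Combining the interior and boundary estimates gives the stated inequality.
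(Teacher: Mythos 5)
Your proposal is correct, and its skeleton coincides with the paper's: both set $w:=I^{2-\alpha}u$ so that $w''=-f$, $w'''=-f'$, $w^{(4)}=-f''$ via \eqref{eq:equation}, invoke \Cref{lmm:regularity-u} for the $\delta$-weighted derivative bounds, extract the leading term $\frac{h_{i+1}-h_{i}}{3}\,w'''(x_i)$ from the Taylor expansion of \eqref{def:Dh2}, bound it by \Cref{lmm:hi1-hi}, and absorb the fourth-derivative remainder using $h_i\simeq h\,\delta(x_i)^{1-1/r}$ and $\delta(s)\simeq\delta(x_i)$ on the stencil from \Cref{lmm:hilexi}.

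The one genuine point of divergence is the boundary nodes. The paper applies the fourth-order expansion of \Cref{lmm:Dh2simd2} uniformly for all $1\le i\le 2N-1$, and the apparent non-integrability of $f''\sim\delta^{-\alpha/2-2}$ at $i=1$ is neutralized inside \Cref{lmm:trucerr2d2f} by the cubic weight, via $\int_0^{x_1}\delta(y)^{-\alpha/2-2}(y-x_0)^3\,dy=\int_0^{x_1}y^{1-\alpha/2}\,dy\simeq x_1^{-\alpha/2-2}h_1^4$. You instead drop to the exact second-order integral-remainder identity at $i=1$ and $i=2N-1$ and use only $|w''|=|f|\le C\delta^{-\alpha/2}$, so no derivative of $f$ is ever evaluated near the endpoint; this makes the boundary case slightly more self-contained (you never need to justify a fourth-order Taylor formula for a function whose fourth derivative blows up at the endpoint), at the cost of running two separate expansions for interior and boundary nodes, where the paper runs one. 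Both variants rest on the identical mechanism: the weight vanishing at the endpoint restores integrability and yields a bound $\simeq x_1^{-\alpha/2}=T^{2/r}h^2x_1^{-\alpha/2-2/r}$, exactly the claimed rate since $\delta(x_1)=Th^r$. One small slip in wording: ``$\delta(s)\simeq x_1$ on $[x_0,x_2]$'' is false as $s\to 0$; what your computation actually needs (and what you correctly flag with the vanishing-weight remark) is $\int_0^{x_1}s\cdot\delta(s)^{-\alpha/2}\,ds=\int_0^{x_1}s^{1-\alpha/2}\,ds\simeq x_1^{2-\alpha/2}$, so the estimate stands as written.
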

  \begin{proof}
    Since \(f\in C^2(\Omega)\) and
    \(
      - \frac{d^2}{dx^2} I^{2-\alpha}u(x) = f(x)
    \) for $x \in \Omega$,
    it implies \(I^{2-\alpha}u \in C^4(\Omega)\).
    From \Cref{lmm:Dh2simd2} in Appendix A,  we have for \(1\le i\le 2N-1\),
    \begin{equation*}
      \begin{aligned}
        &- \left(D_h^2 - \frac{d^2}{dx^2}\right) I^{2-\alpha}u (x_i)
         = \frac{h_{i+1}-h_{i}}{3} f'(x_i) \\
         & \quad + \frac{2}{h_i + h_{i+1}}\left(\frac{1}{h_i} \int_{x_{i-1}}^{x_{i}} f''(y) \frac{(y-x_{i-1})^3}{3!} dy + \frac{1}{h_{i+1}} \int_{x_{i}}^{x_{i+1}} f''(y) \frac{(y-x_{i+1})^3}{3!} dy\right).
      \end{aligned}
    \end{equation*}
    According to \Cref{lmm:hi1-hi,lmm:regularity-u,lmm:trucerr2d2f},  the desired result is obtained.
  \end{proof}

\label{subsec:Ri}

Now we consider the first term of the local truncation error in \eqref{eq:truncerrordepart}, which we denote for simplicity
\begin{equation} \label{eq:Ri}
  \begin{aligned}
    R_i & := D_h^2 I^{2-\alpha}(u-\Pi_h u)(x_i) , \quad 1\le i\le 2N-1 .
  \end{aligned}
\end{equation}
We have derived the following results concerning the estimation of $R_i$ including \Cref{thm:Ri-ilessN/2,thm:Ri-N/2le-i-leN}, which will be demonstrated  in \Cref{subsec:proofofRi}.
  \begin{theorem} \label{thm:Ri-ilessN/2}
    For \(1\le i < N/2\), there exists a constant $C$ such that
    \begin{equation*}
      |R_i| \le \begin{cases}
        C h^2 x_i^{-\alpha/2-2/r} ,             & \alpha/2 - 2/r + 1 > 0, \\
        C h^2 (x_i^{-1-\alpha}\ln(i) + \ln(N)), & \alpha/2 - 2/r + 1 = 0, \\
        C h^{r\alpha/2+r} x_i^{-1-\alpha},        & \alpha/2 - 2/r + 1 < 0.
      \end{cases}
    \end{equation*}
  \end{theorem}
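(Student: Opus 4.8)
The plan is to rewrite $R_i$ as an integral of the interpolation error $e := u - \Pi_h u$ against the weighted second difference of the Riesz kernel. Since $I^{2-\alpha}$ is convolution with $K(x) = \kappa_\alpha|x|^{1-\alpha}/\Gamma(2-\alpha)$ from \eqref{def:I2-a}, and the stencil \eqref{def:Dh2} acts only on the $x$-argument,
\begin{equation*}
  R_i = \sum_{j=1}^{2N}\int_{x_{j-1}}^{x_j} \mathcal{K}_i(y)\, e(y)\, dy, \qquad \mathcal{K}_i(y) := D_h^2 K(x_i - y),
\end{equation*}
where $D_h^2 K(x_i-\cdot)$ means the stencil applied to $x\mapsto K(x-y)$ at $x_i$. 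Because $e$ vanishes at every node, I would feed in interpolation bounds coming from \Cref{lmm:regularity-u,lmm:hilexi}: on the boundary interval $[x_0,x_1]$, where $u(y)\simeq y^{\alpha/2}$ with $\alpha/2\in(1/2,1)$, one has $|e(y)|\le C x_1^{\alpha/2}\simeq C h^{r\alpha/2}$, while on every interior interval $I_j$ ($j\ge 2$) the standard Peano-kernel estimate together with $|u''(y)|\le C\delta(y)^{\alpha/2-2}$ gives $|e(y)|\le C h_j^2\sup_{I_j}|u''|\simeq C h^2 x_j^{\alpha/2-2/r}$.

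For the kernel I would distinguish a local stencil region from the far field. Away from the stencil the weighted second difference is controlled by the continuous second derivative $K''(x)\simeq|x|^{-1-\alpha}$, so $|\mathcal{K}_i(y)|\le C|x_i-y|^{-1-\alpha}$ once $|x_i-y|$ dominates the local mesh width (the nonuniform-mesh remainder, involving $K'''$, is then absorbed). Inserting the interpolation bounds, the far-field part is dominated by $C\sum_{j\ \mathrm{far}}|x_i-x_j|^{-1-\alpha}\int_{I_j}|e|$, which, after comparing the weighted sum $\sum_j(\cdots)h_j$ with the integral $\int(\cdots)\,dx$, is comparable to $C h^2\int |x_i-x|^{-1-\alpha}x^{\alpha/2-2/r}\,dx$ over $\Omega$ minus a neighbourhood of $x_i$. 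Splitting this integral at $x_i/2$ and $2x_i$ produces the trichotomy: the region $x>2x_i$ always yields $h^2 x_i^{-\alpha/2-2/r}$, while the region $(x_1,x_i/2)$ contributes $x_i^{-1-\alpha}\int_{x_1}^{x_i/2}x^{\alpha/2-2/r}\,dx$, whose value is governed by the sign of $\alpha/2-2/r+1$ — convergent at $0$ (absorbed into $h^2 x_i^{-\alpha/2-2/r}$) when positive, logarithmically divergent (giving the $\ln i$ and $\ln N$ factors) when zero, and dominated by the cut-off $x_1\simeq h^r$ (giving $h^{r\alpha/2+r}x_i^{-1-\alpha}$) when negative. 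The boundary interval itself contributes $x_i^{-1-\alpha}\cdot h^{r\alpha/2}\cdot x_1\simeq h^{r\alpha/2+r}x_i^{-1-\alpha}$, matching the negative-regime term.

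The delicate piece is the local stencil $j\in\{i-1,i,i+1\}$, where $K(x_i-y)\simeq|x_i-y|^{1-\alpha}$ is singular and its second difference is formally non-integrable. Here I would integrate by parts twice in $y$ on each subinterval, transferring both derivatives onto $e$; since $\Pi_h u$ is linear we have $e''=u''$ there, the non-integrable kernel is replaced by the integrable doubly-integrated kernel $\simeq|x_i-y|^{3-\alpha}$ (exactly the $|x_i-x_j|^{3-\alpha}$ structure already visible in the weights $\tilde a_{ij}$), and the surviving node terms either carry $e=0$ or are controlled by $|e'|$ at $x_{i\pm1}$. Using $|u''|\le C x_i^{\alpha/2-2}$ and $h_i\simeq h x_i^{1-1/r}$ then bounds this local contribution by $C h^2 x_i^{-\alpha/2-2/r}$, which is already present in the far-field estimate.

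I expect the main obstacle to be precisely this near-field analysis — taming the singular kernel through the double integration by parts while correctly tracking the boundary terms at nodes (where $e$ vanishes but $e'$ jumps) — together with the comparison of the far-field $j$-series to an integral, which has to be made uniform in both $i$ and $N$ in order to produce the exact logarithmic factors in the critical case $\alpha/2-2/r+1=0$.
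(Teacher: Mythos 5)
Your far-field analysis (the regions $(0,x_i/2)$ and $(2x_i,2T)$, including the boundary interval and the trichotomy governed by the sign of $\alpha/2-2/r+1$) reproduces in substance the paper's treatment of $I_1$ and $I_5$ by vertical differences (\Cref{lmm:Ri-I1,lmm:Ri-I5-1}), and that part is sound. The genuine gap is the region between $x_i/2$ and $2x_i$ that lies \emph{outside} the three stencil intervals, i.e.\ the columns $\lceil i/2\rceil<j<2i$ with $j\notin\{i-1,i,i+1\}$, which your proposal silently lumps into the far field. There the kernel estimate of \Cref{lmm:Dh2Kyxi} is two-sided, $D_h^2K_y(x_i)\simeq|x_i-y|^{-1-\alpha}>0$, and the interpolation error has a fixed sign and sharp size $|e(y)|\simeq h_j^2\delta(y)^{\alpha/2-2}\simeq h^2 x_i^{\alpha/2-2/r}$ for a Getoor-type solution, so the termwise bound you invoke is not lossy and the block genuinely satisfies
\begin{equation*}
  \sum_{j}|V_{ij}|
  \;\simeq\; h^2x_i^{\alpha/2-2/r}\int_{h_i\lesssim|y-x_i|\lesssim x_i}|x_i-y|^{-1-\alpha}\,dy
  \;\simeq\; h^2x_i^{\alpha/2-2/r}\,h_i^{-\alpha}
  \;\simeq\; T^{\alpha/r}\,i^{\alpha}\cdot h^2x_i^{-\alpha/2-2/r},
\end{equation*}
using $h_i\simeq h\,x_i^{1-1/r}$ and $x_i\simeq T(ih)^r$: it exceeds the target by the unbounded factor $i^\alpha$. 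The same scaling defeats your claimed stencil bound: after the double integration by parts the three stencil intervals still contribute $\simeq h_i^{2-\alpha}x_i^{\alpha/2-2}\simeq T^{\alpha/r}i^\alpha\, h^2x_i^{-\alpha/2-2/r}$, not $Ch^2x_i^{-\alpha/2-2/r}$.

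The theorem is nonetheless true because these two large contributions (stencil block and medium-range block) nearly cancel; the cancellation occurs \emph{across} columns and rows and is invisible to any argument that estimates $|V_{ij}|$ term by term, as both blocks have fixed (opposite) signs. Capturing it is exactly the role of the paper's skew difference quotients $S_{ij}$ in \eqref{def:Sij} together with the grid mapping functions of \Cref{def:gridmapfunc}: for near-diagonal columns, the triple $T_{i-1,j-1},\,T_{i,j},\,T_{i+1,j+1}$ is realized as one smooth function $P_{i,j}^\theta(x)$ (plus $Q$-terms) evaluated at $x_{i-1},x_i,x_{i+1}$, so that $D_h^2P_{i,j}^\theta(x_i)=(P_{i,j}^\theta)''(\xi)$ carries the integrable exponent $|y_j^\theta-x_i|^{1-\alpha}$ with the small factor $x_i^{\alpha/2-4}$ (\Cref{lmm:d2Pj-itle}), while the mismatch between the skew and vertical orderings at the edges of the block is absorbed by the transition terms $I_2,I_4$ (\Cref{lmm:Ri-I2-I4-ilN/2}). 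Without this device, or an equivalent summation-by-parts argument spanning the entire near-diagonal block rather than only the stencil, your decomposition cannot reach the stated bound.
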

  \begin{theorem} \label{thm:Ri-N/2le-i-leN}
    For \(N/2 \le i\le N\), there exists a constant $C$ such that
    \begin{equation*}
      |R_i| \le C(r-1) h^2 (T-x_{i} + h_N)^{1-\alpha}  + \begin{cases}
        C h^2,             & \alpha/2-2/r+1 > 0, \\
        C h^2 \ln(N) ,     & \alpha/2-2/r+1 = 0, \\
        C h^{r\alpha/2+r}, & \alpha/2-2/r+1 < 0.
      \end{cases}
    \end{equation*}
  \end{theorem}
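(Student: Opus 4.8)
The plan is to estimate $R_i = D_h^2 I^{2-\alpha}(u-\Pi_h u)(x_i)$ through the integral representation
\[
  R_i = \int_0^{2T} \big(D_h^2 K(\cdot-y)\big)(x_i)\,(u-\Pi_h u)(y)\,dy ,
\]
where $D_h^2$ from \eqref{def:Dh2} acts in the first argument of the Riesz kernel $K$ of \eqref{def:I2-a}, and the interpolation error $u-\Pi_h u$ vanishes at every node. On an interior interval $[x_{j-1},x_j]$ the Peano kernel form together with \Cref{lmm:regularity-u} and \Cref{lmm:hilexi} gives $|(u-\Pi_h u)(y)|\le C h_j^2\,|u''|_{[x_{j-1},x_j]}\le C h^2\delta(x_j)^{\alpha/2-2/r}$, while on the first interval $[0,x_1]$, where $u$ is only $C^{\alpha/2}$, one has $|(u-\Pi_h u)(y)|\le C x_1^{\alpha/2}\simeq h^{r\alpha/2}$. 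Because $N/2\le i\le N$ forces $\delta(x_i)=x_i\simeq T$, the point $x_i$ is bounded away from both endpoints, so I would split the integral into a left-far region, a core region straddling $x_i$ and the center $T$, and a right-far region, and bound each separately.

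In the left-far region ($y$ from $0$ up to roughly $x_i/2$) and the right-far region ($y$ near $2T$) the kernel factor is bounded, $|(D_h^2 K(\cdot-y))(x_i)|\simeq|x_i-y|^{-1-\alpha}\simeq 1$, so the contribution is governed entirely by the accumulated interpolation error $\sum_j h_j\,h^2\delta(x_j)^{\alpha/2-2/r}$ together with the separate first-interval term $h^{r\alpha/2+r}$. Substituting $\delta(x_j)=T(j/N)^r$ turns this into $h^3\sum_j j^{\,r\alpha/2+r-3}$, whose behaviour is dictated by the sign of $\alpha/2-2/r+1$: the series grows like $N$ and yields $Ch^2$ when $\alpha/2-2/r+1>0$, is logarithmic and yields $Ch^2\ln N$ at the borderline, and is dominated by its first terms (matching the first-interval contribution) to give $Ch^{r\alpha/2+r}$ when $\alpha/2-2/r+1<0$. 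This reproduces the trichotomy in the stated bound.

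The remaining core region supplies both the benign $O(h^2)$ consistency piece and the decisive anomalous term $C(r-1)h^2(T-x_i+h_N)^{1-\alpha}$. Near the diagonal $u$ is smooth, $h_j\simeq h$, and the cancellation in the symmetric second difference controls the kernel singularity to give $O(h^2)$. The extra term originates at the center: the grid map $\psi(s)=Ts^r$ on $[0,1]$ and $\psi(s)=2T-T(2-s)^r$ on $[1,2]$ is only $C^1$, with $\psi''$ jumping by $2Tr(r-1)$ across $s=1$, a jump that vanishes exactly when $r=1$. This curvature jump imprints a matching jump in the leading part of $u-\Pi_h u$ across $T$; convolving this jump with $K$ and applying $D_h^2$ at $x_i$ produces a term of size $(r-1)h^2$ times the kernel evaluated at the distance from $x_i$ to the center, i.e.\ $\simeq(r-1)h^2(T-x_i)^{1-\alpha}$, regularised to $(T-x_i+h_N)^{1-\alpha}$ by the finite central mesh width $h_N\simeq h$. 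The hard part will be this center analysis in the coincident case $i=N$, where $x_i=T$ places the mesh-curvature jump exactly at the most singular point of the kernel; there the $h_N$-regularisation is indispensable, and one must extract the symmetric-difference cancellation carefully so that the core contribution stays at $O\big((r-1)h^2 h_N^{1-\alpha}\big)=O\big((r-1)h^{3-\alpha}\big)$. Summing the three regional bounds gives the claimed estimate.
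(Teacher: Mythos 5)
Your far-field analysis is sound and matches the paper: away from the diagonal and away from the mesh-concentration points, one bounds $|D_h^2 K_y(x_i)|\simeq |x_i-y|^{-1-\alpha}$ (as in \Cref{lmm:Dh2Kyxi}), pairs it with the interpolation error $Ch^2\delta(y)^{\alpha/2-2/r}$ plus the separate first-interval term, and the sum produces exactly the stated trichotomy; this is the content of \Cref{lmm:Ri-I1} and \Cref{lmm:Ri-I5-1}. The genuine gap is your core region, where you assert that ``the cancellation in the symmetric second difference controls the kernel singularity to give $O(h^2)$'' near the diagonal. No mechanism is given, and the assertion is precisely the hard part of the theorem. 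If the second difference in $x_i$ is taken with the integration interval $[x_{j-1},x_j]$ held fixed (your vertical/natural ordering), the only available bound is $|V_{ij}|\le Ch^2\int_{x_{j-1}}^{x_j}|x_i-y|^{-1-\alpha}dy$, because $u-\Pi_h u$ has no sign structure across intervals that the symmetric difference could exploit; summing over the core region, whose nearest intervals sit at distance $\simeq h$ from $x_i$, gives
\begin{equation*}
  \sum_{j}|V_{ij}| \le Ch^2\int_{ch}^{CT}t^{-1-\alpha}\,dt \simeq Ch^{2-\alpha},
\end{equation*}
a loss of $h^{-\alpha}$ relative to the claim.

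The paper's entire technical apparatus exists to repair exactly this point. In the near-diagonal and central region it abandons the vertical quotients $V_{ij}$ in favor of the skew quotients $S_{ij}$ of \eqref{def:Sij}, which difference $T_{i-1,j-1},T_{i,j},T_{i+1,j+1}$ along diagonals, and it introduces the grid mapping functions of \Cref{def:gridmapfunc} so that these three integrals over distinct intervals become one parametrized family $P_{i,j}^\theta(x)$ to which \Cref{lmm:Dh2simd2} applies. Along such diagonals the kernel argument $x-y_{i,j}^\theta(x)$ barely varies, so the kernel singularity stays at the integrable power $|y_j^\theta-x_i|^{1-\alpha}$ (\Cref{lmm:d2Pj-itle}) rather than the non-integrable $|y_j^\theta-x_i|^{-1-\alpha}$; the two derivatives instead land on the mesh functions $h_{i,j},y_{i,j}$, whose curvature is $O(r-1)$ (\Cref{lmm:esitmate-of-MTFs-2}). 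This is also the true origin of the anomalous term $C(r-1)h^2(T-x_i+h_N)^{1-\alpha}$: in \Cref{lmm:estimate-Sij} it accumulates from all $j\ge N$ (the map between the two graded halves is genuinely curved everywhere), not only from the $C^1$ junction of the mesh generating function at $T$ as your heuristic suggests, although your observation that it vanishes for $r=1$ is correct. Without this diagonal reorganization --- or an equivalent substitute, such as a double integration by parts in $y$ with a careful continuous-versus-discrete cancellation across nodes, which you do not carry out --- the core-region bound, and hence the theorem, remains unproved.
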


\begin{remark} \label{rmk:symm}
Since the mesh \eqref{def:xj} is symmetric at $x = T$,
\Cref{thm:Ri-ilessN/2,thm:Ri-N/2le-i-leN} can be extended to the case of $3N/2 < i \le 2N-1$ and $N\le i \le 3N/2$, respectively.
\end{remark}

According to \Cref{lmm:trunerror2,thm:Ri-ilessN/2,thm:Ri-N/2le-i-leN,rmk:symm}, and using
\begin{gather*}
  h^2 x_i^{-\alpha/2-2/r} \le T^{\alpha/2-2/r} h^{\min\{\frac{r\alpha}{2}, 2\}} x_i^{-\alpha} ,\\
  h^{r\alpha/2+r} x_i^{-1-\alpha} \le T^{-1} h^{r\alpha/2} x_i^{-\alpha}, \\
  h^r x_i^{-1} \ln(i) = T^{-1} \frac{\ln(i)}{i^r} \le T^{-1}, 
  \quad h^r \ln(N) = \frac{\ln(N)}{N^r} \le 1,
\end{gather*}
the proof of \Cref{thm:truncation-error} is completed.

\subsection{Grid mapping functions}
\label{subsec:mesh-transport-functions}

In this subsection, 
we introduce the {\em natural-skew ordering} and {\em grid mapping functions}.
From \eqref{def:I2-a} and \eqref{eq:Ri}, we know that
\begin{equation} \label{eq:I2-au-Piu}
\begin{aligned}
    I^{2-\alpha} \left( u-\Pi_hu \right) (x_i)
     &= \sum_{j=1}^{2N} \int_{x_{j-1}}^{x_{j}} (u(y) - \Pi_hu(y)) K(x_i-y) dy
     = \sum_{j=1}^{2N} T_{ij}
\end{aligned}
\end{equation}
with
  \begin{equation} \label{def:Tij}
    T_{ij} = \int_{x_{j-1}}^{x_{j}} (u(y) - \Pi_hu(y)) K(x_i-y) dy, \quad i=0, \cdots ,2N,\; j=1, \cdots , 2N.
  \end{equation}
  To estimate $R_i$ more precisely, we define the {\em vertical difference quotients} of \(T_{ij}\)
  \begin{equation} \label{def:Vij}
    \begin{aligned}
      V_{ij} &=  \frac{2}{h_{i} + h_{i+1}}  \left( \frac{1}{h_{i}}  T_{i-1,j} - \left(\frac{1}{h_{i}} + \frac{1}{h_{i+1}}\right)  T_{i,j} + \frac{1}{h_{i+1}} T_{i+1,j} \right)  ,
    \end{aligned}
  \end{equation}
  and the {\em skew difference quotients} of \(T_{ij}\)
  \begin{equation} \label{def:Sij}
    S_{ij} =  \frac{2}{h_{i} + h_{i+1}}  \left( \frac{1}{h_{i}}  T_{i-1,j-1} - \left(\frac{1}{h_{i}} + \frac{1}{h_{i+1}}\right)  T_{i,j} + \frac{1}{h_{i+1}} T_{i+1,j+1} \right).
  \end{equation}

From \eqref{eq:Ri}, \eqref{eq:I2-au-Piu} and \eqref{def:Tij}, we have 
\begin{equation} \label{eq:departR1R2}
  R_1 = \sum_{j=1}^3 V_{1,j} + \sum_{j=4}^{2N} V_{1,j} \quad \text{and} \quad
  R_2 = \sum_{j=1}^4 V_{2,j} + \sum_{j=5}^{2N} V_{2,j}.
\end{equation}
Moreover, using \eqref{eq:Ri}-\eqref{def:Sij}, 
we can express $R_i$ based on the {natural-skew ordering}, as shown in  \Cref{fig:depart}:
\begin{equation} \label{eq:depart-Ri}
  \begin{aligned}
    R_i = & I_1 + I_2 + I_3 + I_4 + I_5   \quad \text{for} \quad   3\le i \le N .
  \end{aligned}
\end{equation}
Here, 
$ I_1 =  \sum_{j=1}^{k-1} V_{ij}$,
$I_3 = \sum_{j=k+1}^{m-1} S_{ij}$,
$I_5 =  \sum_{j=m+1}^{2N} V_{ij}$,
and
\begin{gather*}
    I_2 =  \frac{2}{h_i + h_{i+1}}
    \left( \frac{1}{h_{i+1}} (T_{i+1, k} +  T_{i+1, k+1})
    - (\frac{1}{h_{i}}+\frac{1}{h_{i+1}}) T_{i,k} \right) ,  \\
    I_4 =  \frac{2}{h_i + h_{i+1}}
    \left( \frac{1}{h_{i}} (T_{i-1, m} +  T_{i-1, m-1})
    - (\frac{1}{h_{i}}+\frac{1}{h_{i+1}}) T_{i,m} \right)
\end{gather*}
with
\begin{equation} \label{def:m2c}
  k=\lceil i/2\rceil ,\quad
m= \begin{cases}    
2i, & 3\le i < N/2, \\ 2N-\lceil N/2 \rceil + 1, & N/2 \le i \le N.
\end{cases}
\end{equation}

Note that \(I_1\) and \(I_5\) along with $V_{ij}$ as defined in \eqref{def:Vij}, represent natural (vertical) ordering, while \(I_3\), along with $S_{ij}$ as defined in \eqref{def:Sij}, represents skew ordering.

\begin{figure}[htbp]
  \centering 
  \includegraphics[width=0.7\textwidth]{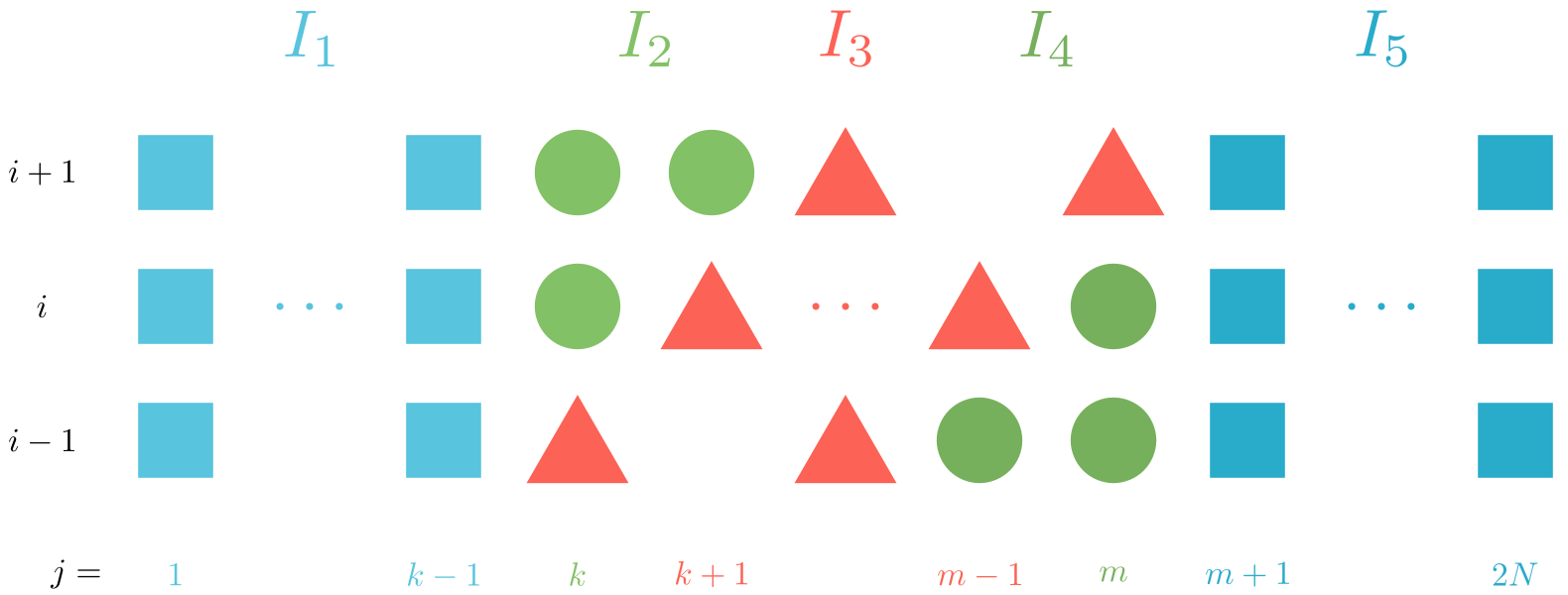}
  \caption{Natural-Skew ordering of \(R_i\).}
  \label{fig:depart}
\end{figure}

The complexity in estimating \(S_{ij}\) in \eqref{def:Sij} lies in the fact that the integral domains for  \(T_{i-1, j-1}, T_{i,j}\) and \(T_{i+1,j+1}\) are distinct.
We first normalize $T_{ij}$ to the unit interval.

\begin{lemma} \label{lmm:Tij-normalized}
  For any \(y\in (x_{j-1}, x_{j})\), there exits 
  \begin{equation*} 
    \begin{aligned}
      T_{ij} & = \int_{x_{j-1}}^{x_{j}} (u(y) - \Pi_hu(y)) K(x_i - y) dy  \\
             & = \int_{0}^{1}
      -\frac{\theta (1-\theta)}{2} h_j^3 u''(y_j^\theta) K(x_i - y_j^\theta) d\theta   \\
             & \quad +  \int_{0}^{1} \frac{\theta (1-\theta)}{3!} h_j^4  K(x_i - y_j^\theta) \left( \theta^2 u'''(\eta_{j1}^\theta) - (1-\theta)^2 u'''(\eta_{j2}^\theta) \right) d\theta
    \end{aligned}
  \end{equation*}
  with \(\eta_{j1}^\theta \in (x_{j-1}, y_j^\theta), \eta_{j2}^\theta \in (y_j^\theta, x_j)\).
\end{lemma}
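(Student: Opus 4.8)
The plan is to reduce the statement to a third-order Taylor expansion of the two nodal values of $u$ about the interior point $y_j^\theta$, followed by the change of variables to the unit interval. First I would record the interpolant explicitly on the single cell $(x_{j-1}, x_j)$: since $\Pi_h u$ from \eqref{def:interp} is the linear function through $(x_{j-1}, u(x_{j-1}))$ and $(x_j, u(x_j))$, evaluating at $y = y_j^\theta$ from \eqref{def:yjt} and using that the two hat functions take the values $1-\theta$ and $\theta$ there gives
\[
  (u - \Pi_h u)(y_j^\theta) = u(y_j^\theta) - (1-\theta)\,u(x_{j-1}) - \theta\,u(x_j).
\]

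Next I would Taylor-expand $u(x_{j-1})$ and $u(x_j)$ about $y_j^\theta$ to third order with Lagrange remainder, using $x_{j-1} - y_j^\theta = -\theta h_j$ and $x_j - y_j^\theta = (1-\theta) h_j$. This is legitimate because \Cref{lmm:regularity-u} gives $u \in C^4(\Omega)$, so $u$ is $C^3$ on each interior cell, producing remainder points $\eta_{j1}^\theta \in (x_{j-1}, y_j^\theta)$ and $\eta_{j2}^\theta \in (y_j^\theta, x_j)$. Forming the convex combination $(1-\theta) u(x_{j-1}) + \theta u(x_j)$, the zeroth-order terms reassemble $u(y_j^\theta)$ and --- this is the one place where sign-bookkeeping matters --- the first-order terms cancel exactly, their coefficients being $-(1-\theta)\theta h_j$ and $+\theta(1-\theta) h_j$. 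The surviving second-order terms collapse to $\tfrac12\theta(1-\theta) h_j^2 u''(y_j^\theta)$, using $(1-\theta)\theta^2 + \theta(1-\theta)^2 = \theta(1-\theta)$, and the two third-order remainders combine, after factoring $\theta(1-\theta)$, into $\tfrac16\theta(1-\theta) h_j^3\bigl(\theta^2 u'''(\eta_{j1}^\theta) - (1-\theta)^2 u'''(\eta_{j2}^\theta)\bigr)$.

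This yields the exact cell-wise identity
\[
  (u - \Pi_h u)(y_j^\theta) = -\frac{\theta(1-\theta)}{2} h_j^2 u''(y_j^\theta) + \frac{\theta(1-\theta)}{3!} h_j^3\bigl(\theta^2 u'''(\eta_{j1}^\theta) - (1-\theta)^2 u'''(\eta_{j2}^\theta)\bigr).
\]
Finally I would substitute $y = y_j^\theta$ into \eqref{def:Tij}, so that $dy = h_j\, d\theta$ and the limits become $0$ and $1$, multiply the displayed error by $K(x_i - y_j^\theta)$, and split the integral into its $u''$ part and its $u'''$ part; this reproduces precisely the two integrals in the statement, with the powers $h_j^3$ and $h_j^4$ arising from the extra factor $h_j$ of the Jacobian.

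I expect no deep obstacle in the computation: the only genuine checks are the exact cancellation of the first-order term and the correct tracking of the $\theta$-dependent remainder points and their signs. The one point deserving a remark is the boundary cells $j = 1$ and $j = 2N$, where one endpoint lies on $\partial\Omega$ and $u$ fails to be $C^3$ up to the boundary (indeed $u \simeq \delta^{\alpha/2}$ by \eqref{eq:reg-u-0}); there the third-order expansion is only valid on the closed subinterval from $y_j^\theta$ to the interior endpoint, so these cells must be treated with care, consistent with the identity being applied later to the skew quotients $S_{ij}$ on the interior range $k+1 \le j \le m-1$.
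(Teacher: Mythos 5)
Your proposal is correct and follows essentially the same route as the paper: the cell-wise Taylor identity you derive is exactly the paper's \Cref{lmm:Dyj} (its second expansion, including the cancellation of the first-order terms and the remainder points \(\eta_{j1}^\theta,\eta_{j2}^\theta\)), and the paper's proof of \Cref{lmm:Tij-normalized} is precisely that identity combined with the substitution \(y=y_j^\theta\), \(dy=h_j\,d\theta\) in \eqref{def:Tij}. Your closing caveat about the boundary cells \(j=1\) and \(j=2N\) is also consistent with the paper, which restricts \Cref{lmm:Dyj} to \(2\le j\le 2N-1\) and handles those cells separately via \Cref{lmm:Dyj1}.
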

\begin{proof}
  By \eqref{def:Tij} and \Cref{lmm:Dyj}, the desired result is obtained.
\end{proof}

To estimate the local truncation error more concisely, we construct the following grid mapping functions.

\begin{definition} 
\label{def:gridmapfunc}
  For \(1\le i, j \le 2N-1\), we define the grid mapping functions
  \begin{equation} \label{def:yij}
    y_{i,j}(x) = \begin{cases}
      (x^{1/r} + Z_{j-i})^r               & i< N, j< N, \\
      \dfrac{x^{1/r} - Z_i}{Z_1} h_N + x_N  & i< N, j=N, \\
      2T - (Z_{2N-(j-i)} - x^{1/r})^r     & i< N, j>N, \\
      \left(\dfrac{Z_1}{h_N}  (x-x_N) + Z_j \right)^r  & i=N, j< N, \\
      x                                   & i=N, j=N, \\
    \end{cases}
  \end{equation}
  and
  \begin{equation*} 
    y_{i,j}(x) = \begin{cases}
      2T - \left(\dfrac{Z_1}{h_N}  (2T-x-x_N) + Z_{2N-j} \right)^r   & i=N , j > N, \\
      (Z_{2N+j-i} - (2T - x)^{1/r})^r  & i > N, j< N, \\
      \dfrac{Z_{2N-j} - (2T-x)^{1/r}}{Z_1} h_N + x_N  & i > N, j=N, \\
      2T-((2T-x)^{1/r}-Z_{j-i})^r & i > N, j> N \\
    \end{cases}
  \end{equation*}
  with  \( Z_{j} := T^{1/r}\frac{j}{N} \).
\end{definition}

Let us further define 
  \begin{equation} \label{def:hij}
    h_{i,j}(x) = y_{i,j}(x) - y_{i,j-1}(x),
  \end{equation}
  \begin{equation} \label{def:yijt}
    y_{i,j}^\theta(x) = (1-\theta) y_{i,j-1}(x) + \theta y_{i,j-1}(x), \quad \theta \in (0, 1),
  \end{equation}
  \begin{equation} \label{def:Pj-itheta-jlN}
    {P_{i,j}^\theta}(x) = ({h_{i,j}}(x))^3  K(x - {y_{i,j}^\theta}(x) ) u''({y_{i,j}^\theta}(x)),
  \end{equation}
  \begin{equation} \label{def:Qj-itheta-jlN}
    {Q_{i,j,l}^{\theta}}(x) = ({h_{i,j}}(x))^l K(x - {y_{i,j}^\theta}(x)), \quad l=3, 4.
  \end{equation}
Then, we can check that
\begin{equation} \label{eq:prop-of-GMFs}
    \begin{gathered}
      y_{i,j}(x_{i-1}) = x_{j-1}, \quad y_{i,j}(x_{i}) = x_{j}, \quad y_{i,j}(x_{i+1}) = x_{j+1}, \\
      h_{i,j}(x_{i-1}) = h_{j-1}, \quad h_{i,j}(x_{i}) = h_j, \quad h_{i,j}(x_{i+1}) = h_{j+1}, \\
      y_{i,j}^\theta(x_{i-1}) = y_{j-1}^\theta, \quad y_{i,j}^\theta(x_{i}) = y_{j}^\theta, \quad y_{i,j}^\theta(x_{i+1}) = y_{j+1}^\theta.
    \end{gathered}
\end{equation}
From \eqref{def:Pj-itheta-jlN}, \eqref{def:Qj-itheta-jlN} and \Cref{lmm:Tij-normalized}, we can rewrite \(T_{ij}\) as
  \begin{equation} \label{lmm:Tij-express-as-int-of-function}
    \begin{aligned}
      T_{ij} & = \int_{0}^{1} -\frac{\theta (1-\theta)}{2} {P_{i,j}^\theta}(x_i) d\theta    \\
             & \quad + \int_{0}^{1} \frac{\theta (1-\theta)}{3!}{Q_{i,j,4}^\theta}(x_i) \left[ \theta^2  u'''(\eta_{j,1}^\theta) - (1-\theta)^2 u'''(\eta_{j,2}^\theta) \right] d\theta.
    \end{aligned}
  \end{equation}
  From \eqref{def:Dh2}, \eqref{def:Sij} and \eqref{lmm:Tij-express-as-int-of-function}, for \( 1 \le i \le 2N-1\), \(2\le j \le 2N-1\), we have
  \begin{equation} \label{eq:Sij-int}
    \begin{aligned}
      S_{ij}
             & = \int_{0}^{1} -\frac{\theta (1-\theta)}{2} D_h^2 P_{i,j}^\theta(x_i)  d\theta    \\
             & \quad +  \int_{0}^1 \frac{\theta^3 (1-\theta)}{3!} \frac{2}{h_{i} + h_{i+1}}\left( \frac{{Q_{i,j,4}^\theta}(x_{i+1}) u'''(\eta_{j+1,1}^\theta) - {Q_{i,j,4}^\theta}(x_{i}) u'''(\eta_{j,1}^\theta)}{h_{i+1}}\right)  d\theta \\
             & \quad -  \int_{0}^1 \frac{\theta^3 (1-\theta)}{3!} \frac{2}{h_{i} + h_{i+1}}\left( \frac{{Q_{i,j,4}^\theta}(x_{i}) u'''(\eta_{j,1}^\theta) - {Q_{i,j,4}^\theta}(x_{i-1}) u'''(\eta_{j-1,1}^\theta)}{h_{i}}\right)  d\theta   \\
             & \quad -  \int_{0}^1 \frac{\theta (1-\theta)^3}{3!} \frac{2}{h_{i} + h_{i+1}}\left( \frac{{Q_{i,j,4}^\theta}(x_{i+1}) u'''(\eta_{j+1,2}^\theta) - {Q_{i,j,4}^\theta}(x_{i}) u'''(\eta_{j,2}^\theta)}{h_{i+1}}\right)  d\theta \\
             & \quad +  \int_{0}^1 \frac{\theta (1-\theta)^3}{3!} \frac{2}{h_{i} + h_{i+1}}\left( \frac{{Q_{i,j,4}^\theta}(x_{i}) u'''(\eta_{j,2}^\theta) - {Q_{i,j,4}^\theta}(x_{i-1}) u'''(\eta_{j-1,2}^\theta)}{h_{i}}\right)  d\theta.
    \end{aligned}
  \end{equation}


The following \Cref{lmm:gen-prop-of-MTFs,lmm:esitmate-of-MTFs-1,lmm:esitmate-of-MTFs-2,lmm:d2Pj-itle,lmm:dQj-itle} about the grid mapping functions will be used in next subsection,
which are proved in \Cref{sec:prfs-of-GMFs}.

\begin{lemma} \label{lmm:gen-prop-of-MTFs}
  For any \(\xi \in (x_{i-1}, x_{i+1})\), \(2 \le i,j \le 2N-2\), there exist
  \begin{gather*}
    \xi \simeq x_i, \quad \delta(y_{i,j}(\xi))\simeq \delta(x_j), \quad h_{i,j}(\xi) \simeq h_j, \\
    |y_{i,j}(\xi) - \xi| \simeq |x_j - x_i|, \quad |y_{i,j-1}(\xi) - \xi| \simeq |x_{j-1} - x_i|, \\
    |y_{i,j}^\theta(\xi) - \xi| = (1-\theta)|y_{i,j-1}(\xi) - \xi| + \theta |y_{i,j}(\xi) - \xi| \simeq |y_j^\theta - x_i|.
  \end{gather*}
\end{lemma}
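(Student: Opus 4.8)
The plan is to reduce all six comparability relations to elementary scaling estimates for the power map, exploiting the linearity $Z_\ell = T^{1/r}\ell/N$, which yields the crucial identity $Z_{j-i} = Z_j - Z_i$. First I would invoke the symmetry of the mesh \eqref{def:xj} and of \Cref{def:gridmapfunc} about $x = T$ to reduce to $2 \le i \le N$; within this range the representative case is $i<N$, $j<N$, where $y_{i,j}(x) = (x^{1/r} + Z_{j-i})^r = (x^{1/r} + Z_j - Z_i)^r$, while the cases $j=N$, $j>N$ and $i=N$ are treated with the corresponding branches of \eqref{def:yij} (near $x=T$ the mesh is quasi-uniform, so those estimates are in fact easier).

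The relation $\xi \simeq x_i$ is immediate: since $\xi \in (x_{i-1}, x_{i+1})$ and $((i\pm 1)/i)^r$ is bounded above and below for $i\ge 2$, we get $x_{i-1} \simeq x_i \simeq x_{i+1}$, hence $\xi \simeq x_i$. For $\delta(y_{i,j}(\xi)) \simeq \delta(x_j)$ I would note that $y_{i,j}$ is monotone increasing (its derivative is a product of positive powers on the relevant range), so the interpolation property \eqref{eq:prop-of-GMFs} forces $y_{i,j}(\xi) \in (x_{j-1}, x_{j+1})$; combining this with $\delta(x_{j-1}) \simeq \delta(x_j) \simeq \delta(x_{j+1})$ from \Cref{lmm:hilexi} and the monotonicity of $\delta$ on each side of $T$ yields the claim (the case where $(x_{j-1},x_{j+1})$ straddles $T$ being covered by quasi-uniformity of the mesh there). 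For $h_{i,j}(\xi) \simeq h_j$, writing $s = \xi^{1/r} \in (Z_{i-1}, Z_{i+1})$ and applying the mean value theorem to $h_{i,j}(\xi) = (s + Z_j - Z_i)^r - (s + Z_{j-1} - Z_i)^r$ gives $h_{i,j}(\xi) \simeq r Z_j^{r-1} Z_1$, which matches $h_j = Z_j^r - Z_{j-1}^r \simeq r Z_j^{r-1} Z_1$ obtained by the same device.

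The main obstacle is the displacement estimate $|y_{i,j}(\xi) - \xi| \simeq |x_j - x_i|$ (and likewise for $y_{i,j-1}$), because it must hold uniformly as $j$ ranges over the entire skew band around $i$: from $j$ adjacent to $i$, where both sides are comparable to a single mesh width, out to $j$ far from $i$. Writing $g(s) = (s+c)^r - s^r$ with $c = Z_j - Z_i$, I would factor $g(s) = s^r[(1 + c/s)^r - 1]$ and observe that as $s$ runs over $(Z_{i-1}, Z_{i+1}) \simeq Z_i$ the prefactor $s^r$ stays comparable to $Z_i^r$ while $\rho = c/s$ stays comparable to $c/Z_i$; since $\rho \mapsto (1+\rho)^r - 1$ has bounded logarithmic variation over any fixed-ratio interval of $\rho$ (behaving like $r\rho$ for small $\rho$ and like $\rho^r$ for large $\rho$), one gets $g(s) \simeq g(Z_i) = x_j - x_i$, the last equality being \eqref{eq:prop-of-GMFs}. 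The sign of $c$ (the case $j<i$) is handled by the obvious symmetry in $s \leftrightarrow s+c$.

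Finally, since $y_{i,j}(\xi)$ is increasing in $j$ with $y_{i,i}(\xi) = \xi$, the two displacements $y_{i,j-1}(\xi) - \xi$ and $y_{i,j}(\xi) - \xi$ always share a sign (allowing zero), so the absolute value distributes over the convex combination \eqref{def:yijt}, giving the stated identity $|y_{i,j}^\theta(\xi) - \xi| = (1-\theta)|y_{i,j-1}(\xi) - \xi| + \theta|y_{i,j}(\xi) - \xi|$. Combining this with the two displacement estimates and the analogous decomposition $y_j^\theta - x_i = (1-\theta)(x_{j-1}-x_i) + \theta(x_j - x_i)$ then yields $|y_{i,j}^\theta(\xi)-\xi| \simeq |y_j^\theta - x_i|$, completing the proof.
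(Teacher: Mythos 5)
Your proposal is correct in substance, but it proves the two core estimates by a genuinely different route than the paper. For the displacement bound $|y_{i,j}(\xi)-\xi|\simeq|x_j-x_i|$, the paper gets the upper bound from monotonicity plus the endpoint identities \eqref{eq:prop-of-GMFs} (i.e.\ $y_{i,j}(\xi)-\xi\le x_{j+1}-x_{i-1}$), and the lower bound from a concavity argument: the second derivative of $\xi\mapsto|y_{i,j}(\xi)-\xi|$ is negative on each branch of \eqref{def:yij}, so the minimum over $(x_{i-1},x_{i+1})$ is attained at an endpoint, where it equals $\min\{x_{j-1}-x_{i-1},\,x_{j+1}-x_{i+1}\}\simeq|x_j-x_i|$. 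You instead factor $g(s)=s^r\left[(1+c/s)^r-1\right]$ and use the doubling property of $\rho\mapsto(1+\rho)^r-1$; this works, but it is tied to the explicit branch $i<N$, $j<N$, whereas the paper's concavity-plus-endpoints device applies uniformly to every branch once the sign of the second derivative is checked. The sharper contrast is in $h_{i,j}(\xi)\simeq h_j$: the paper uses the composition identity $y_{i,j}=y_{j-1,j}\circ y_{i,j-1}$, writing $h_{i,j}(\xi)=y_{j-1,j}(y_{i,j-1}(\xi))-y_{i,j-1}(\xi)$, so that the already-proved displacement estimate (with indices $(j-1,j)$) immediately gives $\simeq x_j-x_{j-1}=h_j$ with no new computation and no branch analysis; your direct mean-value-theorem argument achieves the same thing but has a small corner case (for $j=2$ the MVT point lies a priori in $(Z_0,Z_{3})=(0,Z_3)$, so "$c\simeq Z_j$" needs the inequality $b^r-a^r\ge b^{r-1}(b-a)$ or a similar patch rather than naive comparison), and your treatment of the branches $j=N$, $j>N$, $i=N$ is deferred as "easier" rather than proved. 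The remaining parts --- $\xi\simeq x_i$, $\delta(y_{i,j}(\xi))\simeq\delta(x_j)$ via monotonicity and \eqref{eq:prop-of-GMFs}, and the same-sign argument distributing the absolute value over the convex combination --- coincide with the paper's (equally terse) reasoning.
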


\begin{lemma} \label{lmm:esitmate-of-MTFs-1}
  For any \(\xi \in (x_{i-1}, x_{i+1}) \), \(2\le i \le N, 2\le j \le 2N-2\), there exist
  \begin{gather*}
    |h_{i,j}'(\xi)| \le C (r-1) Z_1 x_i^{1/r-1}  \delta(x_j)^{1-2/r}
    \le C(r-1) h_j x_i^{1/r-1} \delta(x_j)^{-1/r}, \\
    \left|(y_{i,j}(\xi) - \xi)'\right| \le C x_i^{-1} |x_j - x_i|.
  \end{gather*}
\end{lemma}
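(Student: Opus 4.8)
The plan is to differentiate the explicit piecewise formula \eqref{def:yij} for \(y_{i,j}\) directly and to estimate region by region, the main work being the ``bulk'' cases \(i<N,\,j<N\) and \(i<N,\,j>N\) in which \(y_{i,j}\) is a pure power map, with the transition cases \(j=N\) and \(i=N\) treated separately.

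Consider first \(i<N,\,j<N\), where \(y_{i,j}(x)=(x^{1/r}+Z_{j-i})^r\). From \eqref{def:hij} one gets
\[
  h_{i,j}'(x)=x^{1/r-1}\Bigl[(x^{1/r}+Z_{j-i})^{r-1}-(x^{1/r}+Z_{j-1-i})^{r-1}\Bigr],
\]
and since the two arguments differ by exactly \(Z_{j-i}-Z_{j-1-i}=Z_1\), the mean value theorem gives \(h_{i,j}'(x)=(r-1)Z_1\,x^{1/r-1}\zeta^{r-2}\) with \(\zeta\) lying between \(x^{1/r}+Z_{j-1-i}=y_{i,j-1}(x)^{1/r}\) and \(x^{1/r}+Z_{j-i}=y_{i,j}(x)^{1/r}\). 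Evaluating at \(\xi\in(x_{i-1},x_{i+1})\) and invoking \Cref{lmm:gen-prop-of-MTFs}, I would use \(\xi\simeq x_i\) to replace \(\xi^{1/r-1}\) by \(x_i^{1/r-1}\), and the comparabilities \(\delta(y_{i,j}(\xi))\simeq\delta(x_j)\), \(\delta(y_{i,j-1}(\xi))\simeq\delta(x_j)\) to deduce \(\zeta\simeq\delta(x_j)^{1/r}\), hence \(\zeta^{r-2}\simeq\delta(x_j)^{1-2/r}\). This yields \(|h_{i,j}'(\xi)|\le C(r-1)Z_1 x_i^{1/r-1}\delta(x_j)^{1-2/r}\); the second form then follows from \(Z_1=T^{1/r}h\) together with \(h_j\simeq h\,\delta(x_j)^{1-1/r}\) of \Cref{lmm:hilexi}, since \(Z_1\delta(x_j)^{1-1/r}\simeq h_j\). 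The case \(i<N,\,j>N\) is entirely analogous using the reflected formula \(y_{i,j}(x)=2T-(Z_{2N-(j-i)}-x^{1/r})^r\), and produces the identical bound with \(\delta(x_j)=2T-x_j\).

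For the second inequality, in these same power-map regions I would exploit the algebraic identity \(y_{i,j}'(\xi)=\bigl(y_{i,j}(\xi)/\xi\bigr)^{1-1/r}\), which holds because \(y_{i,j}(\xi)^{1/r}=\xi^{1/r}+Z_{j-i}\). Writing \(w=y_{i,j}(\xi)/\xi\) and \(p=1-1/r\in(0,1)\), the elementary inequality \(|w^{p}-1|\le|w-1|\) (valid for every \(w>0\) and \(p\in(0,1]\), since \(w^{p}\le w\) for \(w\ge1\) while \(w^{p}\ge w\) for \(w\le 1\)) gives
\[
  |(y_{i,j}(\xi)-\xi)'|=|w^{p}-1|\le|w-1|=\frac{|y_{i,j}(\xi)-\xi|}{\xi}\le C\,x_i^{-1}|x_j-x_i|,
\]
where the last step uses \(|y_{i,j}(\xi)-\xi|\simeq|x_j-x_i|\) and \(\xi\simeq x_i\) from \Cref{lmm:gen-prop-of-MTFs}.

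The main obstacle is the transition cases \(j=N\) and \(i=N\) (and, by symmetry, their reflections), where \(y_{i,j}\) is no longer a pure power but a map linear in \(x^{1/r}\), or a power of a linear map, so the clean mean-value and power-identity arguments do not apply directly. Here the naive estimate of \(h_{i,j}'\) is only \(O(1)\cdot x_i^{1/r-1}\), too large by a factor of \(N\); one must instead expand in \(h=1/N\) and exhibit the cancellation of the leading \(O(1)\) terms. For instance, for \(i=N=j\) one has \(h_{N,N}'(x)=1-r\tfrac{Z_1}{h_N}\bigl(\tfrac{Z_1}{h_N}(x-x_N)+Z_{N-1}\bigr)^{r-1}\), and Taylor expanding \(h_N=T\bigl(1-(1-1/N)^r\bigr)\) and \(Z_{N-1}^{r-1}\) shows that the constant terms cancel, leaving \(h_{N,N}'(x)=\tfrac{r-1}{2}h+O(h^2)=O((r-1)h)\), which matches the target \(C(r-1)Z_1 x_N^{1/r-1}\delta(x_N)^{1-2/r}\simeq C(r-1)h\). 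The cases \(i=N,\,j<N\) and \(i<N,\,j=N\) are handled by the same expansion, extracting the \((r-1)\) factor from the cancellation and pairing it against \(\delta(x_j)^{1-2/r}\); the corresponding second inequality follows from the analogous expansion of \(y_{i,j}'(\xi)-1\).
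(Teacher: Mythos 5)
Your proposal is correct, and for the bulk cases it is essentially the paper's own argument: the paper likewise differentiates the piecewise formula \eqref{def:yij} to get $h_{i,j}'(x)=x^{1/r-1}\bigl(y_{i,j}^{1-1/r}(x)-y_{i,j-1}^{1-1/r}(x)\bigr)$, bounds the power difference by a mean-value-type estimate (there written discretely as $x_{j+1}^{1-1/r}-x_{j-2}^{1-1/r}\le C(r-1)Z_1x_j^{1-2/r}$ in \eqref{eq:yij1-1/r-j-1}, which is the same as your $(r-1)Z_1\zeta^{r-2}$ with $\zeta\simeq\delta(x_j)^{1/r}$), and proves the second inequality via exactly your elementary inequality $|w^p-1|\le|w-1|$, which is the paper's \Cref{ineq:a-b-theta}. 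The only genuine divergence is in the transition cases $j=N,N+1$ and $i=N$, which you correctly identify as the crux. There you propose a Taylor expansion in $h$ and verification that the $O(1)$ terms cancel, leaving $O((r-1)h)$; your sample computation $h_{N,N}'(x_N)=\tfrac{r-1}{2}h+O(h^2)$ is correct (and the remainder indeed carries the $(r-1)$ factor, since everything vanishes identically at $r=1$). The paper avoids this expansion with a slicker device: by the mean value theorem it writes the constant $\tfrac{h_N}{rZ_1}=T^{1-1/r}\tfrac{1-(1-h)^r}{rh}=\eta^{1-1/r}$ for some $\eta\in(x_{N-1},x_N)$, see \eqref{eq:hN/rZ1}, so that every transition case becomes again a difference of two values of $y\mapsto y^{1-1/r}$ at points in $[x_{N-2},x_N]$ and is absorbed into the same bulk estimate; the second inequality in these cases then also reduces to \Cref{ineq:a-b-theta} via \eqref{eq:hN/rZ1-xi<N}. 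What the paper's trick buys is uniformity (no second-order expansions, no case-by-case cancellation bookkeeping, automatic tracking of the $(r-1)$ factor); what your route buys is transparency about \emph{why} the leading terms cancel. Both are valid; to make yours fully rigorous you would need to carry out the expansion for each of $i<N,j=N$; $i<N,j=N+1$; $i=N,j\neq N$ and confirm the remainder is $O((r-1)h)$ uniformly, which is routine but lengthier than the paper's reduction.
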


\begin{lemma} \label{lmm:esitmate-of-MTFs-2}
  For any \(\xi \in (x_{i-1}, x_{i+1})\), \(2\le i \le N, 2\le j \le 2N-2\), there exist
  \begin{equation*}
    |y_{i,j}''(\xi)| \le C(r-1)
    \begin{cases}
      x_j^{-1/r} x_i^{1/r-2} |x_j - x_i|  ,& i < N, j < N,  \\
      x_N^{1-1/r} x_i^{1/r-2}             ,& i < N, j = N , \\
      \delta(x_j)^{1-2/r} x_i^{1/r-2} x_N^{1/r}     ,& i < N, j > N , \\
      \delta(x_j)^{1-2/r} x_N^{2/r-2}             ,& i = N, j \neq N , \\
      0   & i = N, j = N.
    \end{cases}
  \end{equation*}
  For \(2\le i \le N, 3\le j \le 2N-2\),  there exist 
  \begin{equation*}
    |h_{i,j}''(\xi)| \le C (r-1)
    \begin{cases}
      Z_1 x_i^{1/r-2} x_j^{-2/r} (|x_j - x_i| + x_j)   ,& i < N, j < N , \\
      x_i^{1/r-2} x_N^{1-1/r}                        ,& i < N, j=N, N+1, \\
      Z_1 x_i^{1/r-2} \delta(x_j)^{1-3/r}  x_N^{1/r}    ,& i < N, j>N+1 ,\\
      Z_1 x_N^{2/r-2} \delta(x_j)^{1-3/r}               ,& i = N, j \neq N, N+1 , \\
      x_N^{-1}                                       ,& i = N, j = N, N+1.
    \end{cases}
  \end{equation*}
\end{lemma}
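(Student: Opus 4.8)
The plan is to establish both families of bounds by differentiating the explicit branch formulas in \eqref{def:yij} twice and then inserting the comparisons from \Cref{lmm:gen-prop-of-MTFs}. Two algebraic facts drive everything: for $i,j\le N$ one has $Z_{j-i}=x_j^{1/r}-x_i^{1/r}$, and consecutive shifts differ by a constant, $Z_{j-i}-Z_{j-1-i}=Z_1$. I also record the elementary power inequality
\[
  x_j^{1-1/r}\bigl|x_j^{1/r}-x_i^{1/r}\bigr|\le |x_j-x_i|,
\]
valid for $r\ge1$: it reduces to $1-t\le 1-t^r$ for $t\in[0,1]$ when $x_i\le x_j$, and to $P^{r-1}(Q-P)\le Q^r-P^r$ with $P=x_j^{1/r}\le Q=x_i^{1/r}$ when $x_j\le x_i$. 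These are the only inputs beyond \Cref{lmm:gen-prop-of-MTFs}.

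For $y_{i,j}''$ I would run through the five branches. In a power branch such as $y_{i,j}(x)=(x^{1/r}+Z_{j-i})^r$, two differentiations and a cancellation of the $x^{1/r}$ terms yield the compact form
\[
  y_{i,j}''(x)=-\frac{r-1}{r}\,x^{1/r-2}\,(x^{1/r}+Z_{j-i})^{r-2}\,Z_{j-i},
\]
and the branches $j>N$ and $i=N$ collapse similarly (producing the factor $Z_{2N-(j-i)}$, respectively a factor $(Z_1/h_N)^2(\,\cdot\,)^{r-2}$), while $i=j=N$ gives $y_{i,j}''\equiv0$. Setting $\xi\simeq x_i$ and using $(\xi^{1/r}+Z_{j-i})^r=y_{i,j}(\xi)\simeq x_j$ (or $\simeq\delta(x_j)$ for $j>N$) from \Cref{lmm:gen-prop-of-MTFs}, I replace $(\xi^{1/r}+Z_{j-i})^{r-2}$ by $x_j^{1-2/r}$ (resp.\ $\delta(x_j)^{1-2/r}$). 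The claimed forms then follow from $|Z_{j-i}|=|x_j^{1/r}-x_i^{1/r}|$ and the power inequality, together with the elementary mesh bounds $Z_{2N-(j-i)}\lesssim x_N^{1/r}$ and $h_N/Z_1\simeq x_N^{1-1/r}$ read off directly from \eqref{def:xj}.

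For $h_{i,j}''=y_{i,j}''-y_{i,j-1}''$ the aim is to produce the extra factor $Z_1$ and one further negative power. When $y_{i,j}$ and $y_{i,j-1}$ lie in the same branch I factor out $-\frac{r-1}{r}x^{1/r-2}$ and apply the mean value theorem to $\psi(s)=(x^{1/r}+s)^{r-2}s$ over the increment $Z_{j-i}-Z_{j-1-i}=Z_1$, using $\psi'(s)=(x^{1/r}+s)^{r-3}[(r-1)s+x^{1/r}]$; this gives
\[
  h_{i,j}''(x)=-\frac{r-1}{r}\,x^{1/r-2}\,Z_1\,(x^{1/r}+s^\ast)^{r-3}\bigl[(r-1)s^\ast+x^{1/r}\bigr]
\]
for some intermediate $s^\ast$. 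Since $\xi^{1/r}+s^\ast\simeq x_j^{1/r}$ (both endpoints of the increment are $\simeq x_j^{1/r}$, using $\delta(x_{j-1})\simeq\delta(x_j)$ from \Cref{lmm:hilexi}), this is bounded by $(r-1)Z_1 x_i^{1/r-2}x_j^{1-3/r}\bigl(|x_j^{1/r}-x_i^{1/r}|+x_i^{1/r}\bigr)$; the power inequality absorbs the first inner term into $x_j^{-2/r}|x_j-x_i|$, while $x_j^{1-1/r}x_i^{1/r}\le |x_j-x_i|+x_j$ (splitting on $x_i\le x_j$ versus $x_i>x_j$) absorbs the second. The branches with $i=N$, and with $j>N+1$, are handled by the same mean value theorem applied to $s\mapsto(\,\cdot\,+s)^{r-2}$.

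The genuine difficulty, and the reason the statement isolates $j=N,N+1$, is the boundary-crossing increments, where $y_{i,j}$ and $y_{i,j-1}$ sit in \emph{different} branches of \eqref{def:yij}, so no single mean value theorem is available and the clean cancellation is lost. There I would drop the differencing and instead bound $y_{i,j}''$ and $y_{i,j-1}''$ separately by the $y''$-estimates already proved, then use the triangle inequality; because $x_{N-1}\simeq x_N=T$ and $\delta(x_{N\pm1})\simeq T$, each one-sided term already meets the claimed bound. A little additional care is needed at the smallest admissible indices (e.g.\ $j=i$, where $Z_{j-1-i}=-Z_1$ requires $\xi>x_1$ for the branch formula to stay real and positive), but this is guaranteed by $i\ge2$ and $\xi\in(x_{i-1},x_{i+1})$.
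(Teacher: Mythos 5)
Your proposal is correct and follows essentially the same route as the paper: differentiate the explicit branch formulas of \eqref{def:yij}, combine the power inequality \(x_j^{1-1/r}\,|x_j^{1/r}-x_i^{1/r}|\le|x_j-x_i|\) (the paper's \Cref{ineq:a-b-theta}) with the comparisons of \Cref{lmm:gen-prop-of-MTFs} and \(h_N/(rZ_1)\simeq x_N^{1-1/r}\), and treat the branch-crossing cases \(j=N,N+1\) (and \(i=N\)) by the triangle inequality \(|h_{i,j}''|\le|y_{i,j}''|+|y_{i,j-1}''|\), exactly as the paper does. The only cosmetic difference is in the same-branch cases, where you apply one mean value theorem to the product \(\psi(s)=(x^{1/r}+s)^{r-2}s\) over the increment \(Z_1\), while the paper splits the same difference as \(\bigl(y_{i,j}^{1-2/r}-y_{i,j-1}^{1-2/r}\bigr)Z_{j-i}+y_{i,j-1}^{1-2/r}Z_1\) and estimates the two pieces separately; the two computations are interchangeable and yield the same bounds.
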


  \begin{lemma} \label{lmm:d2Pj-itle}
    Let \({P_{i,j}^\theta}(x_i)\) be defined by \eqref{def:Pj-itheta-jlN} and the difference quotient operator $D_h^2$ be defined by \eqref{def:Dh2}. Then we have
    \paragraph{Case 1}
    For \(3\le i < N, \lceil\frac{i}{2}\rceil+1 \le j \le \min\{2i-1, N-1\}\), there exists
    \begin{equation*}
      |D_h^2 {P_{i,j}^\theta}(x_{i})| \le C h_j^3 |y_j^\theta - x_i|^{1-\alpha} x_i^{\alpha/2-4}.
    \end{equation*}
    \paragraph{Case 2}
    For \(N/2\le i \le N\), \(j=N, N+1\), there exists
    \begin{equation*}
      \begin{aligned}
        |D_h^2 P_{i,j}^\theta(\xi)| 
        \le C h_j^3 |y_j^\theta - x_i|^{1-\alpha}  + C(r-1) h_j^2 \Big(|y_j^\theta - x_i|^{1-\alpha} + h_j|y_j^\theta - x_i|^{-\alpha}
        \Big).
      \end{aligned}
    \end{equation*}
    \paragraph{Case 3}
    For \(N/2 \le i \le N\), \(N+2 \le j \le 2N-\lceil\frac{N}{2}\rceil\), there exists
    \begin{equation*}
      \begin{aligned}
        |D_h^2 P_{i,j}^\theta(\xi)| 
        \le C h_j^3 \Big(|y_j^\theta - x_i|^{1-\alpha}  + (r-1) |y_j^\theta - x_i|^{-\alpha}
        \Big).
      \end{aligned}
    \end{equation*}
  \end{lemma}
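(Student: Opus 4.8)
The plan is to reduce all three estimates to a single pointwise bound on the second derivative of $P_{i,j}^\theta$ and then expand that derivative by the Leibniz and chain rules. First I would observe that in each case the pair $(i,j)$ keeps the argument $x-y_{i,j}^\theta(x)$ of the kernel $K$ in \eqref{def:Pj-itheta-jlN} bounded away from zero on the whole stencil $[x_{i-1},x_{i+1}]$ (the shift map $y_{i,j}$ moves every point by $j-i$ positions, so $x-y_{i,j}^\theta(x)$ keeps a fixed sign whenever $\theta\in(0,1)$), whence $P_{i,j}^\theta\in C^2([x_{i-1},x_{i+1}])$. Since the operator $D_h^2$ of \eqref{def:Dh2} is exactly twice the second divided difference on $\{x_{i-1},x_i,x_{i+1}\}$, the mean value theorem for divided differences yields $D_h^2 P_{i,j}^\theta(x_i)=(P_{i,j}^\theta)''(\zeta)$ for some $\zeta\in(x_{i-1},x_{i+1})$ (alternatively one may use the exact remainder identity of \Cref{lmm:Dh2simd2}, as in \Cref{lmm:trunerror2}). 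It therefore suffices to bound $|(P_{i,j}^\theta)''(\xi)|$ uniformly for $\xi\in(x_{i-1},x_{i+1})$.

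Next I would write $P_{i,j}^\theta=A\cdot B\cdot C$ with $A(x)=(h_{i,j}(x))^3$, $B(x)=K(x-y_{i,j}^\theta(x))$ and $C(x)=u''(y_{i,j}^\theta(x))$, and apply the Leibniz rule to $(ABC)''$. Each differentiation of $B$ produces, by the chain rule, a factor $K^{(l)}(x-y_{i,j}^\theta)$ with $|K^{(l)}(z)|\le C|z|^{1-\alpha-l}$ times products of $(x-y_{i,j}^\theta)'=1-(y_{i,j}^\theta)'$ and $(x-y_{i,j}^\theta)''=-(y_{i,j}^\theta)''$; each differentiation of $C$ produces $u^{(2+l)}(y_{i,j}^\theta)$ times products of $(y_{i,j}^\theta)'$ and $(y_{i,j}^\theta)''$; and each differentiation of $A$ lowers a power of $h_{i,j}$ while producing $h_{i,j}'$ or $h_{i,j}''$. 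I would then substitute the uniform equivalences of \Cref{lmm:gen-prop-of-MTFs} ($\xi\simeq x_i$, $h_{i,j}(\xi)\simeq h_j$, $|x-y_{i,j}^\theta(\xi)|\simeq|y_j^\theta-x_i|$, $\delta(y_{i,j}^\theta(\xi))\simeq\delta(x_j)$), the regularity bound $|u^{(l)}(y_{i,j}^\theta)|\le C\delta(x_j)^{\alpha/2-l}$ from \Cref{lmm:regularity-u}, and the grid-mapping derivative estimates of \Cref{lmm:esitmate-of-MTFs-1,lmm:esitmate-of-MTFs-2}. The organizing observation is that the bound for $(x-y_{i,j}^\theta)'$ in \Cref{lmm:esitmate-of-MTFs-1} is $\le C\,x_i^{-1}|y_j^\theta-x_i|$ with \emph{no} explicit $(r-1)$, whereas $h_{i,j}'$, $h_{i,j}''$ and $(y_{i,j}^\theta)''$ all carry the factor $(r-1)$; hence every summand in which a grid-mapping function is differentiated is exactly the one supplying the $(r-1)$ pieces.

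With this classification the three cases fall out by inspecting which summand dominates. In Case 1, where $x_j\simeq x_i=\delta(x_i)$ lies in the boundary layer, every $x$-derivative contributes a factor $\simeq x_i^{-1}$; since $r$ is bounded the $(r-1)$ summands are of the same order as the leading one, and all terms are controlled by $h_j^3|y_j^\theta-x_i|^{1-\alpha}\delta(x_j)^{\alpha/2-2}x_i^{-2}\simeq h_j^3|y_j^\theta-x_i|^{1-\alpha}x_i^{\alpha/2-4}$. In Cases 2 and 3, where $i\simeq N$ and $\delta(x_i)\simeq T$ is of order one, the leading contribution comes from $A\cdot B''$ through the term $K''(x-y_{i,j}^\theta)\,((x-y_{i,j}^\theta)')^2$ (and $A\cdot B\cdot C''$), giving the clean $h_j^3|y_j^\theta-x_i|^{1-\alpha}$; the more singular $(r-1)|y_j^\theta-x_i|^{-\alpha}$ term arises precisely from the summand $K'(x-y_{i,j}^\theta)\,(y_{i,j}^\theta)''$ via the $(r-1)$-weighted bound on $(y_{i,j}^\theta)''$; and the reduced power $h_j^2$ in Case 2 comes from the second-derivative term $h_{i,j}''$ inside $A''$, whose estimate near $j=N$ in \Cref{lmm:esitmate-of-MTFs-2} supplies the $(r-1)$ and drops one power of $h_j$. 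Matching these to the ranges $j=N,N+1$ and $N+2\le j\le 2N-\lceil N/2\rceil$, using the piecewise-linear (respectively reflected) form of $y_{i,j}$ there, produces the stated bounds.

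The main obstacle is the bookkeeping of Steps 2–3: the full expansion of $(ABC)''$ has many mixed terms, and the delicate point is to verify that, after inserting the case-dependent estimates of \Cref{lmm:esitmate-of-MTFs-1,lmm:esitmate-of-MTFs-2} (whose own split over $i<N$ versus $i=N$ and $j<N$ versus $j=N$ versus $j>N$ mirrors the present case structure), the $(r-1)$-weighted terms collapse to exactly the powers of $h_j$ and $|y_j^\theta-x_i|$ claimed, with all remaining cross terms absorbed into the leading term. Obtaining the correct exponents in the boundary-to-center transition of Case 2, where the grid mapping changes character at $j=N$, is where the argument is most sensitive.
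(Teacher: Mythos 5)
Your proposal is correct and follows essentially the same route as the paper: reduce $D_h^2 P_{i,j}^\theta$ to $(P_{i,j}^\theta)''(\xi)$ via the mean value property of the difference quotient (justified by the fixed sign of $x-y_{i,j}^\theta(x)$ on the stencil, exactly the paper's $\mathrm{sign}(j-i-1+\theta)$ observation), then expand by Leibniz and chain rules and insert the bounds from \Cref{lmm:gen-prop-of-MTFs,lmm:esitmate-of-MTFs-1,lmm:esitmate-of-MTFs-2,lmm:regularity-u,lmm:hilexi}, with the case structure inherited from the case-dependent estimates of $h_{i,j}''$ and $(y_{i,j}^\theta)''$. Your accounting of where each term originates — the clean $h_j^3|y_j^\theta-x_i|^{1-\alpha}$ from $K''\,((x-y_{i,j}^\theta)')^2$ and $u''''$, the $(r-1)h_j^2$ term in Case 2 from $h_{i,j}''$, and the $(r-1)|y_j^\theta-x_i|^{-\alpha}$ term from $K'\,(y_{i,j}^\theta)''$ — is precisely the bookkeeping the paper's terse "Leibniz formula and chain rules" step carries out.
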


\begin{lemma} \label{lmm:dQj-itle}
  Let \({Q_{i,j, l}^\theta}(x_i)\) be defined by \eqref{def:Qj-itheta-jlN}. Then we have
  for \(2 \le i \le N\), \(2 \le j \le 2N-2\), \(l=3,4\), there exist
  \begin{equation*}
    \begin{aligned}
      & \left| \frac{{Q_{i,j,l}^\theta}(x_{i+1}) u^{(l-1)}(\eta_{j+1}^\theta) - {Q_{i,j,l}^\theta}(x_{i}) u^{(l-1)}(\eta_{j}^\theta)}{h_{i+1}}\right|  \\
      & \quad \le C h_j^l |y_j^\theta - x_i|^{1-\alpha} x_i^{-1} \delta(x_j)^{\alpha/2-l+1-1/r} (x_i^{1/r} + \delta(x_j)^{1/r}),
    \end{aligned}
  \end{equation*}
  and
  \begin{equation*}
    \begin{aligned}
      & \left| \frac{{Q_{i,j,l}^\theta}(x_{i}) u^{(l-1)}(\eta_{j}^\theta) - {Q_{i,j,l}^\theta}(x_{i-1}) u^{(l-1)}(\eta_{j-1}^\theta)}{h_{i}} \right|  \\
      & \quad \le C h_j^l |y_j^\theta - x_i|^{1-\alpha} x_i^{-1} \delta(x_j)^{\alpha/2-l+1-1/r} (x_i^{1/r} + \delta(x_j)^{1/r})
    \end{aligned}
  \end{equation*}
  with \(\eta_{j}^\theta \in (x_{j-1}, x_{j})\).
\end{lemma}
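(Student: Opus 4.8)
The plan is to turn each quotient into a discrete product rule and then estimate the two resulting pieces by the mean value theorem together with the grid‑mapping estimates already established. Writing $A(x)=Q_{i,j,l}^\theta(x)$ and $B_j=u^{(l-1)}(\eta_j^\theta)$, the numerator of the first quotient splits as
\begin{equation*}
  A(x_{i+1})B_{j+1}-A(x_i)B_j = A(x_{i+1})\,(B_{j+1}-B_j) + \bigl(A(x_{i+1})-A(x_i)\bigr)\,B_j ,
\end{equation*}
so that after dividing by $h_{i+1}$ it suffices to bound
\begin{equation*}
  \mathrm{I} = A(x_{i+1})\,\frac{B_{j+1}-B_j}{h_{i+1}} \quad\text{and}\quad \mathrm{II} = \frac{A(x_{i+1})-A(x_i)}{h_{i+1}}\,B_j .
\end{equation*}
Throughout I would keep in hand $K(z)\simeq|z|^{1-\alpha}$, $|K'(z)|\simeq|z|^{-\alpha}$, the equivalences $h_{i,j}(\xi)\simeq h_j$, $|y_{i,j}^\theta(\xi)-\xi|\simeq|y_j^\theta-x_i|$, $\delta(y_{i,j}(\xi))\simeq\delta(x_j)$ of \Cref{lmm:gen-prop-of-MTFs}, the mesh relations $h_{j\pm1}\simeq h_j$, $\delta(x_{j\pm1})\simeq\delta(x_j)$ and $h_j/h_{i+1}\simeq(\delta(x_j)/x_i)^{1-1/r}$ from \Cref{lmm:hilexi} (recall $\delta(x_i)=x_i$ for $i\le N$), and the regularity bound $|u^{(l)}(x)|\le C\delta(x)^{\alpha/2-l}$ from \Cref{lmm:regularity-u}.

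For $\mathrm{I}$, the nodal identities \eqref{eq:prop-of-GMFs} give $A(x_{i+1})=h_{j+1}^l K(x_{i+1}-y_{j+1}^\theta)$, so $|A(x_{i+1})|\simeq h_j^l|y_j^\theta-x_i|^{1-\alpha}$. Since $\eta_j^\theta\in(x_{j-1},x_j)$ and $\eta_{j+1}^\theta\in(x_j,x_{j+1})$, the mean value theorem gives $|B_{j+1}-B_j|\le|u^{(l)}(\zeta)|\,|\eta_{j+1}^\theta-\eta_j^\theta|\le C\delta(x_j)^{\alpha/2-l}h_j$ for some $\zeta\in(x_{j-1},x_{j+1})$. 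Dividing by $h_{i+1}$ and using $h_j/h_{i+1}\simeq(\delta(x_j)/x_i)^{1-1/r}$ yields
\begin{equation*}
  |\mathrm{I}|\le C\,h_j^l|y_j^\theta-x_i|^{1-\alpha}\,x_i^{-1}\,\delta(x_j)^{\alpha/2-l+1-1/r}\,x_i^{1/r},
\end{equation*}
i.e. precisely the $x_i^{1/r}$ contribution in the claimed bound.

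For $\mathrm{II}$, the mean value theorem converts the quotient into $(Q_{i,j,l}^\theta)'(\xi)$ at some $\xi\in(x_i,x_{i+1})$, and differentiating $Q_{i,j,l}^\theta(x)=(h_{i,j}(x))^l K(x-y_{i,j}^\theta(x))$ produces
\begin{equation*}
  (Q_{i,j,l}^\theta)'(\xi) = l\,h_{i,j}(\xi)^{l-1}h_{i,j}'(\xi)\,K(\xi-y_{i,j}^\theta(\xi)) + h_{i,j}(\xi)^l K'(\xi-y_{i,j}^\theta(\xi))\,\bigl(1-(y_{i,j}^\theta)'(\xi)\bigr).
\end{equation*}
Into this I would feed $|h_{i,j}'(\xi)|\le C(r-1)h_j x_i^{1/r-1}\delta(x_j)^{-1/r}$ and $|1-(y_{i,j}^\theta)'(\xi)|\le C x_i^{-1}|y_j^\theta-x_i|$, the latter obtained from the $y_{i,j}$ bound of \Cref{lmm:esitmate-of-MTFs-1} by taking the convex combination defining $y_{i,j}^\theta$, together with $|B_j|\le C\delta(x_j)^{\alpha/2-l+1}$; this gives
\begin{equation*}
  |\mathrm{II}|\le C\,h_j^l|y_j^\theta-x_i|^{1-\alpha}\,x_i^{-1}\,\delta(x_j)^{\alpha/2-l+1-1/r}\,\bigl((r-1)x_i^{1/r}+\delta(x_j)^{1/r}\bigr),
\end{equation*}
where the $h_{i,j}'$ term supplies the $x_i^{1/r}$ part and the $K'$ term the $\delta(x_j)^{1/r}$ part. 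Summing $|\mathrm{I}|$ and $|\mathrm{II}|$ and absorbing the bounded factor $r$ into $C$ produces the first inequality; the second inequality is identical after replacing $x_{i+1},h_{i+1}$ by $x_{i-1},h_i$ and using the corresponding backward equivalences. I expect the genuine obstacle to be the clean handling of $\mathrm{II}$: one must differentiate the grid‑mapping composition correctly and then match the two target summands $x_i^{1/r}$ and $\delta(x_j)^{1/r}$ against the $h_{i,j}'$ and $K'$ contributions, which is exactly where the sharp derivative estimates of \Cref{lmm:esitmate-of-MTFs-1,lmm:gen-prop-of-MTFs} are essential.
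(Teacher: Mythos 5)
Your proposal is correct and follows essentially the same route as the paper's proof: the same discrete product-rule splitting (the paper attaches $B_{j+1}$ to the difference of $Q$'s and $Q(x_i)$ to the difference of $B$'s, a mirror image of your split, which changes nothing in the estimates), the same mean value theorem reduction of the quotient to $(Q_{i,j,l}^\theta)'(\xi)$, and the same inputs from \Cref{lmm:gen-prop-of-MTFs,lmm:esitmate-of-MTFs-1,lmm:hilexi,lmm:regularity-u}. Your accounting of which term produces the $x_i^{1/r}$ factor and which produces the $\delta(x_j)^{1/r}$ factor matches the paper exactly.
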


\subsection{Error analysis of $R_i$}
\label{subsec:proofofRi}
In this subsection, we estimate the first term of the local truncation error $R_i$ in \eqref{eq:Ri} through \eqref{eq:departR1R2} and \eqref{eq:depart-Ri}.
We denote
\begin{equation} \label{def:Kyx}
    K_y(x) := K(x-y) = \frac{\kappa_\alpha}{\Gamma(2-\alpha)} |x - y|^{1-\alpha}, \quad 1<\alpha<2,
\end{equation}
where the kernel function $K(x)$ is given in \eqref{def:I2-a} and $\kappa_\alpha$ is given in \eqref{eq:aij}.

\begin{lemma} \label{lmm:Ri-I5-1}
Let \(I_5 = \sum_{j=m+1}^{2N} V_{ij}\) be defined by \eqref{eq:depart-Ri}. Then we have
\paragraph{Case 1}
For  \(1 \le i < N/2\) and \(m=\max\{2i, 3\}\), there exists
\begin{equation*}
  \begin{aligned}
    \sum_{j=m+1}^{2N} \left| V_{ij} \right| 
    \le C h^2 x_i^{-\alpha/2-2/r}
    + \begin{cases}
          C h^2,             & \alpha/2-2/r+1 > 0, \\
          C h^2 \ln(N) ,     & \alpha/2-2/r+1 = 0, \\
          C h^{r\alpha/2+r}, & \alpha/2-2/r+1 < 0.
        \end{cases}
  \end{aligned}
\end{equation*}
\paragraph{Case 2}
For \(N/2 \le i \le N\) and $m=2N-\lceil \frac{N}{2} \rceil +1$, there exists
\begin{equation*}
  \begin{aligned}
    \sum_{j=m+1}^{2N} |V_{ij}|
    \le \begin{cases}
          C h^2,             & \alpha/2-2/r+1 > 0, \\
          C h^2 \ln(N) ,     & \alpha/2-2/r+1 = 0, \\
          C h^{r\alpha/2+r}, & \alpha/2-2/r+1 < 0.
        \end{cases}
  \end{aligned}
\end{equation*}
\end{lemma}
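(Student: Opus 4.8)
The plan is to treat $I_5$ as a genuine \emph{far-field} sum: for $j>m$ the collocation point $x_i$ sits at distance $\simeq|x_i-x_j|$ from the support $(x_{j-1},x_j)$, bounded away from zero, so the kernel $K$ of \eqref{def:Kyx} is smooth there and each $V_{ij}$ is just a second difference of a smooth function. First I would note that in the representation \eqref{lmm:Tij-express-as-int-of-function} the only dependence of $T_{ij}$ on the row index $i$ is through the factor $K(x_i-y_j^\theta)$; the weight $h_j$, the node $y_j^\theta$, and the intermediate points $\eta_{j,\cdot}^\theta$ are all independent of $i$. Pulling the $D_h^2$ of \eqref{def:Vij} inside the $\theta$-integral therefore gives
\[
V_{ij} = \int_0^1 -\frac{\theta(1-\theta)}{2}\, h_j^3\, u''(y_j^\theta)\, D_h^2\big[K(\cdot-y_j^\theta)\big](x_i)\,d\theta + (\text{$Q_4$-remainder}),
\]
so it suffices to bound the non-uniform second difference of the kernel. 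Using the Taylor/remainder form of $D_h^2$ (the expansion behind \Cref{lmm:Dh2simd2}), and since $|x_i-y_j^\theta|\simeq|x_i-x_j|$ is large compared with $h_i,h_{i+1}$ in the far field, the second-derivative term dominates and $|D_h^2[K(\cdot-y_j^\theta)](x_i)|\le C|x_i-x_j|^{-1-\alpha}$. Combining this with $|u''|\le C\delta^{\alpha/2-2}$ from \Cref{lmm:regularity-u}, the comparability $\delta(y_j^\theta)\simeq\delta(x_j)$ of \Cref{lmm:gen-prop-of-MTFs}, and $h_j\simeq h\,\delta(x_j)^{1-1/r}$ of \Cref{lmm:hilexi}, I obtain the pointwise bound $|V_{ij}|\le C h^3\,\delta(x_j)^{1+\alpha/2-3/r}\,|x_i-x_j|^{-1-\alpha}$. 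The $Q_4$-remainder merely replaces $h_j^3 u''$ by $h_j^4 u'''$, i.e.\ multiplies this bound by $h_j\,\delta(x_j)^{-1}\simeq h\,\delta(x_j)^{-1/r}\le1$, so it contributes at most at the same order and is absorbed.

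Next I would convert the sum into an integral. Writing $h^3=h^2\cdot h$ and using $h\simeq h_j\,\delta(x_j)^{1/r-1}$ recasts the pointwise bound as $|V_{ij}|\le C h^2\,h_j\,\delta(x_j)^{\alpha/2-2/r}|x_i-x_j|^{-1-\alpha}$, so that $\sum_{j>m}|V_{ij}|$ is, up to constants, a Riemann sum for
\[
C h^2 \int_{x_m}^{2T} \delta(x)^{\alpha/2-2/r}\,|x_i-x|^{-1-\alpha}\,dx .
\]
Since the integrand is smooth on each far cell (it stays away from $x=x_i$), the sum and the integral are comparable on each subinterval, which makes the reduction rigorous.

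Finally I would evaluate this integral by splitting $(x_m,2T)$ into three pieces. Near the inner edge $x\simeq x_m\simeq x_{2i}$ one has $\delta(x)\simeq|x_i-x|\simeq x_i$, and integrating over a window of width $\simeq x_i$ produces the term $C h^2\,x_i^{-\alpha/2-2/r}$; this appears in Case~1 ($1\le i<N/2$) but is absent in Case~2 ($N/2\le i\le N$), where $x_i\le T$ while $x_j$ lies past the centre, so $|x_i-x_j|\gtrsim1$ throughout and no inner-edge singularity occurs. On the bulk $|x_i-x|\gtrsim1$ the integrand is bounded and contributes $O(h^2)$. The decisive piece is the boundary layer near $x=2T$: there $|x_i-x|\simeq 2T-x_i$ stays bounded while $\delta(x)=2T-x\to0$ down to the smallest scale $\delta\simeq h_{2N}\simeq h^{r}$ (from \eqref{def:xj}), and evaluating $\int \delta^{\alpha/2-2/r}\,d\delta$ with this cutoff yields exactly the trichotomy $h^2$, $h^2\ln N$, or $h^{r\alpha/2+r}$ according as $\alpha/2-2/r+1$ is positive, zero, or negative. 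The main obstacle I anticipate is this last bookkeeping: making the Riemann-sum comparison tight, pinning the boundary cutoff at $h^r$ to reproduce the three regimes, and checking that the $D_h^2$ remainder and the $Q_4$-term remain at or below the stated order uniformly in $i$ and $j$.
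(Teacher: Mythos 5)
Your overall route coincides with the paper's own proof for all cells except the last one: since the column index $j$ is held fixed in the vertical difference \eqref{def:Vij}, the operator $D_h^2$ lands only on the kernel, so $V_{ij}=\int_{x_{j-1}}^{x_j}(u-\Pi_hu)(y)\,D_h^2K_y(x_i)\,dy$; one then combines $D_h^2K_y(x_i)\simeq|x_i-y|^{-1-\alpha}$ (\Cref{lmm:Dh2Kyxi}) with the interpolation bound $|u-\Pi_hu|\le Ch^2\delta(y)^{\alpha/2-2/r}$ (\Cref{lmm:Dyjleh2ya/2m2/r}, which you re-derive from $|u''|\le C\delta^{\alpha/2-2}$ and \Cref{lmm:hilexi}), and sums or integrates exactly as you describe, splitting into the near edge, the bulk, and the far boundary layer. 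This reproduces the paper's estimates for $m+1\le j\le 2N-1$, including the trichotomy with cutoff $\delta\simeq h_{2N}\simeq h^{r}$.

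The genuine gap is the last cell $j=2N$, which belongs to $I_5=\sum_{j=m+1}^{2N}V_{ij}$ but which none of your tools can reach. Every ingredient you invoke --- the Taylor representation behind \Cref{lmm:Tij-normalized} and \Cref{lmm:Dyj}, the bound $|u-\Pi_hu|\le Ch^2\delta^{\alpha/2-2/r}$, and the comparabilities $h_j\simeq h\,\delta(x_j)^{1-1/r}$, $\delta(y_j^\theta)\simeq\delta(x_j)$ --- is stated, and is only true, for $j\le 2N-1$: on $(x_{2N-1},2T)$ one has $\delta(x_{2N})=0$, and $u''$, $u'''$ are unbounded there, so the Lagrange remainders you rely on are not controlled. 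Your integral surrogate also fails on this cell: $\int_{x_{2N-1}}^{2T}\delta(x)^{\alpha/2-2/r}dx$ diverges precisely in the regime $\alpha/2-2/r+1<0$, which is why you truncate at $\delta\simeq h^{r}$ --- but that truncation silently discards $V_{i,2N}$ without ever bounding it. The paper closes this cell by a separate, first-derivative argument: by \Cref{lmm:Dyj1}, $|u-\Pi_hu|\le Ch_{2N}^{\alpha/2}$ on $(x_{2N-1},2T)$, hence $|V_{i,2N}|\le CT^{-1-\alpha}h_{2N}^{\alpha/2+1}=CT^{-\alpha/2}h^{r\alpha/2+r}$, and this is admissible in all three regimes because $r\alpha/2+r>2$ is equivalent to $\alpha/2-2/r+1>0$. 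Adding this one step (and noting, as you correctly observed, that in Case 2 the near-edge term is absent since $|x_i-x_j|\gtrsim T$ throughout) would complete your proof.
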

\begin{proof}
  For $1\le i < N/2$, $m+1 \le j \le 2N$ with \(m=\max\{2i, 3\}\),
  using \eqref{def:Tij}, \eqref{def:Vij}, \eqref{def:Kyx}, \Cref{lmm:Dyjleh2ya/2m2/r,lmm:Dh2Kyxi}, we have
  \begin{equation*}
    \begin{aligned}
      | V_{ij}| 
      & =  \left| \int_{x_{j-1}}^{x_{j}}\left( u(y) - \Pi_hu(y)\right) D_h^2 K_y (x_i) dy \right|  
       \le C h^2 \int_{x_{j-1}}^{x_{j}} \delta(y)^{\alpha/2-2/r} |x_i-y|^{-1-\alpha} dy.
    \end{aligned}
  \end{equation*}
  Since $y\ge x_{j-1} \ge x_{2i}$, $ y - x_i \simeq y$, and \(x_{i} \simeq x_{2i} \), it yields
  \begin{equation*}
    \begin{aligned}
      \sum_{j=m+1}^{N} |V_{ij}|
        & \le C h^2 \int_{x_{2i}}^{x_{N}} y^{-\alpha/2-2/r-1} dy                     
         \le C h^2 x_i^{-\alpha/2-2/r}.
    \end{aligned}
  \end{equation*}
  On the other hand, since $y-x_i \simeq T$ if $y\ge x_N = T$, there exist
  \begin{equation*}
    \begin{aligned}
      \sum_{j=N+1}^{2N-1} |V_{ij}|
       & \le C T^{-1-\alpha} h^2 \int_{x_{N}}^{x_{2N-1}} (2T-y)^{\alpha/2-2/r}  dy                       \\
       & \le \begin{cases}
             \frac{C}{\alpha/2-2/r+1}T^{-\alpha/2-2/r} \; h^2,                & \alpha/2-2/r+1 > 0 ,\\
             CrT^{-1-\alpha} h^2 \ln(N),    & \alpha/2-2/r+1 = 0 ,\\
             \frac{C}{|\alpha/2-2/r+1|} T^{-\alpha/2-2/r} \; h^{r\alpha/2+r}, & \alpha/2-2/r+1 < 0  .
           \end{cases}
    \end{aligned}
  \end{equation*}
  Finally, by \Cref{lmm:Dyj1}, one has
  \begin{equation*}
    |V_{i,2N}| \le C T^{-1-\alpha} h_{2N}^{\alpha/2+1} = C T^{-\alpha/2} h^{r\alpha/2+r}.
  \end{equation*}
  Then, the desired result in Case 1 is obtained.
  We can similarly prove for Case 2, the details are omitted here.
\end{proof}

Immediately, we can calculate $R_1, R_2$ from \eqref{eq:departR1R2}.
\begin{lemma} \label{lmm:R1R2}
For \(i=1, 2\), we have
  \begin{equation*}
    \begin{aligned}
      |R_i| \le C h^2 x_i^{-\alpha/2-2/r} +
      \begin{cases}
        C h^2,             & \alpha/2-2/r+1 > 0 ,\\
        C h^2 \ln(N) ,     & \alpha/2-2/r+1 = 0 ,\\
        C h^{r\alpha/2+r}, & \alpha/2-2/r+1 < 0.
      \end{cases}
    \end{aligned}
  \end{equation*}
\end{lemma}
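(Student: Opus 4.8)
The plan is to split each $R_i$, $i=1,2$, via the decomposition \eqref{eq:departR1R2} into a \emph{far tail} and a \emph{near block} of only finitely many terms, and to control the two pieces by entirely different means. Observe that the tail $\sum_{j=4}^{2N}V_{1,j}$ for $R_1$ (resp.\ $\sum_{j=5}^{2N}V_{2,j}$ for $R_2$) is exactly the sum $I_5=\sum_{j=m+1}^{2N}V_{ij}$ analysed in \Cref{lmm:Ri-I5-1} with $m=\max\{2i,3\}$: indeed $m=3$ when $i=1$ and $m=4$ when $i=2$. Hence Case~1 of \Cref{lmm:Ri-I5-1} bounds the tail by precisely the right-hand side asserted in the present lemma, and it remains only to estimate the near block, i.e.\ the three terms $V_{1,1},V_{1,2},V_{1,3}$ for $R_1$ and the four terms $V_{2,1},\dots,V_{2,4}$ for $R_2$.

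For the near block I would \emph{not} try to exploit the cancellation built into the second difference quotient \eqref{def:Vij}; because only a bounded number of terms appears, the crude triangle inequality
\[
  |V_{ij}|\le\frac{2}{h_i+h_{i+1}}\left(\frac{1}{h_i}\,|T_{i-1,j}|+\Big(\frac{1}{h_i}+\frac{1}{h_{i+1}}\Big)|T_{i,j}|+\frac{1}{h_{i+1}}\,|T_{i+1,j}|\right)
\]
is already sharp enough, and it reduces the task to bounding each $|T_{i',j}|$ individually for $i'\in\{i-1,i,i+1\}$ and $1\le j\le 2i$. Each such $T_{i',j}$ I would estimate through the normalized representation of \Cref{lmm:Tij-normalized}: taking absolute values inside the two $\theta$-integrals, using $|u''|\le C\delta^{\alpha/2-2}$ and $|u'''|\le C\delta^{\alpha/2-3}$ from \Cref{lmm:regularity-u}, bounding the kernel by $K(x_{i'}-y_j^\theta)\le C\,|x_{i'}-y_j^\theta|^{1-\alpha}$, and invoking $\delta(y_j^\theta)\simeq\delta(x_j)$ (\Cref{lmm:hilexi}), one obtains
\[
  |T_{i',j}|\le C\Big(h_j^3\,\delta(x_j)^{\alpha/2-2}+h_j^4\,\delta(x_j)^{\alpha/2-3}\Big)\int_0^1|x_{i'}-y_j^\theta|^{1-\alpha}\,d\theta.
\]

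The crux, and the main obstacle, is the remaining $\theta$-integral for the diagonal and near-diagonal indices $j=i'$ and $j=i'+1$, where $x_{i'}$ sits at an endpoint of $[x_{j-1},x_j]$ so that the kernel passes through its own singularity; this is precisely the regime in which the far-field pointwise bound on $D_h^2K_y$ used for the tail breaks down. Here I would simply use integrability of the weakly singular kernel: since $1<\alpha<2$,
\[
  \int_0^1|x_{i'}-y_j^\theta|^{1-\alpha}\,d\theta=\frac{1}{h_j}\int_{x_{j-1}}^{x_j}|x_{i'}-y|^{1-\alpha}\,dy\le C\,h_j^{1-\alpha},
\]
so the singularity is absorbed by the $h_j^3$-smallness carried by the interpolation error. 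Substituting the mesh equivalences $h_j\simeq h\,\delta(x_j)^{1-1/r}$, $\delta(x_j)\simeq x_j\simeq x_i$ and, for the bounded indices at hand, $h\simeq x_i^{1/r}$ (all from \Cref{lmm:hilexi} together with \eqref{def:xj}), both contributions collapse to $|T_{i',j}|\le C\,h^{4-\alpha}\,x_i^{(1-1/r)(4-\alpha)+\alpha/2-2}$, whereupon the prefactor $(h_i+h_{i+1})^{-1}h_i^{-1}\simeq h_i^{-2}\simeq h^{-2}x_i^{-2(1-1/r)}$ yields $|V_{ij}|\le C\,h^2 x_i^{-\alpha/2-2/r}$ after one more use of $h\simeq x_i^{1/r}$. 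A final subtlety for $i=1$ is that $V_{1,j}$ involves $T_{0,j}$ with $x_0=0$; but then $K(x_0-y_j^\theta)\simeq(y_j^\theta)^{1-\alpha}$ and all derivatives of $u$ are evaluated at the interior point $y_j^\theta\simeq h^r>0$, so the estimates above apply verbatim. Summing the at most four near terms and adding the tail from \Cref{lmm:Ri-I5-1} completes the proof.
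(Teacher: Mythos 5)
Your overall architecture matches the paper exactly: split via \eqref{eq:departR1R2} into a near block plus the tail, identify the tail with Case~1 of \Cref{lmm:Ri-I5-1} (with $m=\max\{2i,3\}$), and bound the finitely many near terms by the crude triangle inequality without exploiting cancellation in $D_h^2$. Your treatment of the cells $2\le j\le m$ is also sound, and your arithmetic ($h\simeq x_i^{1/r}$, $h_j\simeq x_i$ for bounded indices) correctly collapses everything to $Ch^2x_i^{-\alpha/2-2/r}$.

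However, there is a genuine gap in the first cell $j=1$. You apply \Cref{lmm:Tij-normalized} together with $|u''|\le C\delta^{\alpha/2-2}$, $|u'''|\le C\delta^{\alpha/2-3}$ and the equivalence $\delta(y_j^\theta)\simeq\delta(x_j)$ uniformly for all $j$ in the near block, but all three ingredients fail when $j=1$. The representation in \Cref{lmm:Tij-normalized} rests on \Cref{lmm:Dyj}, which the paper states only for $2\le j\le 2N-1$ — and necessarily so, since on $[x_0,x_1]=[0,x_1]$ the solution is merely $C^{\alpha/2}$ up to the boundary and the Taylor expansions with second/third-derivative remainders are not available there. Concretely, $\delta(y_1^\theta)=\theta x_1\to 0$ as $\theta\to 0$, so $\delta(y_1^\theta)\not\simeq\delta(x_1)$ (\Cref{lmm:hilexi} gives $\delta(y_{j}^\theta)\simeq\delta(x_j)$ only for $j\ge 2$), and the intermediate point $\eta_{11}^\theta\in(0,y_1^\theta)$ in the third-derivative remainder can sit arbitrarily close to the boundary, where $|u'''(\eta_{11}^\theta)|\le C(\eta_{11}^\theta)^{\alpha/2-3}$ is useless because there is no lower bound on $\eta_{11}^\theta$. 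Your ``final subtlety'' remark addresses only the kernel being centered at $x_0=0$, not this loss of regularity of $u$ inside the first cell; the claim that all derivative evaluations occur at points $\simeq h^r>0$ is simply false for $j=1$. The paper's proof (\Cref{lmm:sumSij13}) avoids this by switching estimates on the first cell: it uses \Cref{lmm:Dyj1}, i.e.\ $|u-\Pi_hu|\le\int_0^{x_1}|u'(y)|\,dy\le C h_1^{\alpha/2}$, which requires only the integrable first derivative $|u'(y)|\le Cy^{\alpha/2-1}$; combined with $\int_0^{x_1}|x_{i'}-y|^{1-\alpha}dy\le Ch_1^{2-\alpha}$ this gives $|T_{i',1}|\le Cx_1^{2-\alpha/2}$, after which your argument for the remaining cells and the tail goes through. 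With this one repair your proof is correct and is essentially the paper's.
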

\begin{proof}
  According to \eqref{eq:departR1R2}, \Cref{lmm:sumSij13,lmm:Ri-I5-1}, the desired result is obtained.
\end{proof}

For \(R_i\) with \(3\le i\le N\), the terms \(\{I_1, I_2, I_3, I_4\}\) in \eqref{eq:depart-Ri} remain to be estimated.
  \begin{lemma} \label{lmm:Ri-I1}
    Let \(I_1 = \sum_{j=1}^{k-1} V_{ij}\) be defined by \eqref{eq:depart-Ri}.
    Then we have, 
    for \(3\le i \le N, k=\lceil\frac{i}{2}\rceil\),
    \begin{equation*}
      \sum_{j=1}^{k-1} |V_{ij}| \le \begin{cases}
        C h^2 x_i^{-\alpha/2-2/r} ,        & \alpha/2-2/r+1 > 0, \\
        C h^2 x_i^{-1-\alpha} \ln(i),      & \alpha/2-2/r+1 = 0 ,\\
        C h^{r\alpha/2+r} x_i^{-1-\alpha}, & \alpha/2-2/r+1 < 0.
      \end{cases}
    \end{equation*}
  \end{lemma}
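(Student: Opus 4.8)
The plan is to mirror the far-field argument used in the proof of \Cref{lmm:Ri-I5-1}, exploiting that every interval indexed by $1\le j\le k-1$ with $k=\lceil i/2\rceil$ lies well to the left of the stencil $\{x_{i-1},x_i,x_{i+1}\}$. First I would rewrite each term as $V_{ij}=\int_{x_{j-1}}^{x_j}\bigl(u-\Pi_hu\bigr)(y)\,D_h^2K_y(x_i)\,dy$, using \eqref{def:Tij}, \eqref{def:Vij} and \eqref{def:Kyx}, and record the crucial separation estimate: since $j\le k-1$ forces $y\le x_{k-1}\le x_{i/2}=2^{-r}x_i\le x_i/2$ by \eqref{def:xj} with $r\ge1$, one has $x_i-y\simeq x_i$ uniformly on the whole range $y\in(0,x_{k-1})$, and moreover $x_{k-1}\le x_{i-2}<x_{i-1}$ so the stencil never meets the integration domain. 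Combined with \Cref{lmm:Dyjleh2ya/2m2/r} and \Cref{lmm:Dh2Kyxi}, this yields for $2\le j\le k-1$ the pointwise bound $|V_{ij}|\le Ch^2\int_{x_{j-1}}^{x_j}\delta(y)^{\alpha/2-2/r}|x_i-y|^{-1-\alpha}\,dy\le Ch^2x_i^{-1-\alpha}\int_{x_{j-1}}^{x_j}y^{\alpha/2-2/r}\,dy$, where $\delta(y)=y$ on the left half of $\Omega$.

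Summing over $2\le j\le k-1$ reduces the estimate to the single integral $\int_{x_1}^{x_{k-1}}y^{\alpha/2-2/r}\,dy$, which I would evaluate according to the sign of $p+1$ with $p:=\alpha/2-2/r$. When $p+1>0$ the integral is controlled by its upper limit $x_{k-1}^{\,p+1}\simeq x_i^{\,p+1}$, producing $Ch^2x_i^{-\alpha/2-2/r}$; when $p+1=0$ it equals $\ln(x_{k-1}/x_1)=r\ln(k-1)\simeq\ln i$ by \eqref{def:xj}, producing $Ch^2x_i^{-1-\alpha}\ln i$; and when $p+1<0$ it is controlled by its lower limit $x_1^{\,p+1}$ with $x_1=Th^r$, so that $x_1^{\,p+1}\simeq h^{r\alpha/2+r-2}$ and the sum is $Ch^{r\alpha/2+r}x_i^{-1-\alpha}$. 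These are exactly the three branches of the claimed bound.

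It remains to absorb the boundary interval $j=1$, where $\delta(y)=y$ degenerates and the naive interpolation estimate is no longer integrable at the origin. Here I would instead invoke the boundary-interval estimate \Cref{lmm:Dyj1} (the left-endpoint counterpart of the $V_{i,2N}$ bound used in \Cref{lmm:Ri-I5-1}), which together with $x_i-y\simeq x_i$ gives $|V_{i,1}|\le Cx_i^{-1-\alpha}h_1^{\alpha/2+1}=Ch^{r\alpha/2+r}x_i^{-1-\alpha}$ since $h_1=x_1=Th^r$. This matches the third branch verbatim, and in the second branch the relation $p+1=0$ forces $r\alpha/2+r=2$, whence $|V_{i,1}|\le Ch^2x_i^{-1-\alpha}\le Ch^2x_i^{-1-\alpha}\ln i$.

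The delicate branch, and the main obstacle, is $p+1>0$, where I must absorb $h^{r\alpha/2+r}x_i^{-1-\alpha}$ into $Ch^2x_i^{-\alpha/2-2/r}$ even though $x_i^{-1-\alpha}\ge x_i^{-\alpha/2-2/r}$ for small $x_i$. The resolution is that $i\ge3$ supplies the lower bound $x_i=Ti^rh^r\ge c\,h^r$; writing the algebraic identity $h^{r\alpha/2+r}x_i^{-1-\alpha}=h^2x_i^{-\alpha/2-2/r}\bigl(h^r/x_i\bigr)^{p+1}$ and noting $\bigl(h^r/x_i\bigr)^{p+1}\le(1/c)^{p+1}=C$ (using $p+1>0$) then closes the estimate. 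Collecting the $j=1$ term with the $2\le j\le k-1$ sum produces the three cases of the lemma, noting finally that for $i=3,4$ one has $k-1=1$ and only the boundary term is present.
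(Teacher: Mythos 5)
Your proposal is correct and follows essentially the same route as the paper's proof: the same splitting into the boundary term $V_{i1}$ (handled via \Cref{lmm:Dyj1}) and the terms $2\le j\le k-1$ (handled via \Cref{lmm:Dyjleh2ya/2m2/r}, \Cref{lmm:Dh2Kyxi} and the separation $y\le x_{k-1}<2^{-r}x_i$), followed by the identical three-case evaluation of $\int_{x_1}^{x_{k-1}}y^{\alpha/2-2/r}\,dy$. Your final absorption argument, showing $h^{r\alpha/2+r}x_i^{-1-\alpha}\le Ch^2x_i^{-\alpha/2-2/r}$ when $\alpha/2-2/r+1>0$ via $x_i\ge cTh^r$, is a step the paper leaves implicit, so you have if anything made the argument slightly more complete.
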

\begin{proof}
  According to \eqref{def:Vij}, \Cref{lmm:Dyj1,lmm:Dh2Kyxi}, it yields
  \begin{equation*}
    |V_{i1}| \le C \int_{0}^{x_1} x_1^{\alpha/2} |x_i-y|^{-1-\alpha}dy \simeq x_1^{\alpha/2+1} x_i^{-1-\alpha} = T^{\alpha/2+1} h^{r\alpha/2+r} x_i^{-1-\alpha}.
  \end{equation*}
  Using \Cref{lmm:Dyjleh2ya/2m2/r}, \Cref{lmm:Dh2Kyxi} and $y\le x_{k-1} < 2^{-r}x_i $, \(x_i - y \simeq x_i\), we have
  \begin{equation*}
    \begin{aligned}
      |V_{ij}| 
             \le C h^2 \int_{x_{j-1}}^{x_{j}} y^{\alpha/2-2/r} x_i^{-1-\alpha} dy, \quad 2 \le j \le k-1,
    \end{aligned}
  \end{equation*}
  and
  \begin{equation*}
    \begin{aligned}
      \sum_{j=2}^{k-1} |V_{ij}|
        \le C h^{r\alpha/2+r} x_i^{-1-\alpha} + C h^2 x_i^{-1-\alpha} \int_{x_1}^{x_{\lceil\frac{i}{2}\rceil-1}} y^{\alpha/2-2/r} dy.
    \end{aligned}
  \end{equation*}
  Moreover we can check that
  \begin{equation*}
    \begin{aligned}
      \int_{x_1}^{x_{\lceil\frac{i}{2}\rceil-1}} y^{\alpha/2-2/r} dy
       & \le \begin{cases}
               \frac{1}{\alpha/2-2/r+1} (2^{-r} x_i)^{\alpha/2-2/r+1}  ,& \alpha/2-2/r+1 > 0, \\
               \ln(2^{-r} x_i) - \ln(x_1)                              ,& \alpha/2-2/r+1 = 0, \\
               \frac{1}{|\alpha/2-2/r+1|} x_1^{\alpha/2-2/r+1}         ,& \alpha/2-2/r+1 < 0.
             \end{cases}
    \end{aligned}
  \end{equation*}
  The proof is completed.
\end{proof}

Subsequently, we turn our attention to $I_3=\sum_{j=k+1}^{m-1} S_{ij}$ 
with $m=2i$ for $3\le i<N/2$ and $m=2N-\lceil N/2 \rceil +1$ for $N/2 \le i \le N$ in \eqref{def:m2c}.
  \begin{lemma} \label{lmm:estimate-Sij}
  Let \(I_3 = \sum_{j=k+1}^{m-1} S_{ij}\) be defined by \eqref{eq:depart-Ri}. Then we have
    \paragraph{Case 1}
    For \(N/2 \le i \le N\), $m=2N-\lceil N/2 \rceil +1$, there exist
    \begin{equation*}
      |S_{ij}| \le C (h^3 + (r-1)h^2) (T-x_i+h_N)^{1-\alpha}, \quad j=N, N+1,
    \end{equation*}
    and
    \begin{equation*}
      \sum_{j=N+2}^{m-1} \left|S_{ij} \right| 
      \le C h^2 + C (r-1) h^2 (T-x_i + h_N)^{1-\alpha}.
    \end{equation*}
    \paragraph{Case 2}
    For \(3\le i \le N-1\), $k=\lceil \frac{i}{2} \rceil$,  
    there exist
    \begin{equation*}
       \sum_{j=k+1}^{\min\{m-1, N-1\}} \left|S_{ij} \right| \le C h^2 x_i^{-\alpha/2-2/r}, \quad
      \sum_{j=\lceil\frac{N}{2}\rceil+1}^{N-1} \left|S_{Nj} \right| 
      \le C h^2 + C (r-1) h^2 h_N^{1-\alpha}.
    \end{equation*}
  \end{lemma}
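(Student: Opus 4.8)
The plan is to start from the integral representation \eqref{eq:Sij-int} of the skew difference quotient $S_{ij}$, which splits it into a leading piece governed by $D_h^2 P_{i,j}^\theta(x_i)$ and four remainder pieces built from the difference quotients of $Q_{i,j,4}^\theta$ against the third derivative of $u$. Because the $\theta$-weights $\theta(1-\theta)$, $\theta^3(1-\theta)$ and $\theta(1-\theta)^3$ are bounded and integrable on $(0,1)$, the $\theta$-integration contributes only harmless constants; the genuine work is to produce a pointwise-in-$\theta$ bound for each integrand and then sum over $j$. For the leading piece I would insert the appropriate case of \Cref{lmm:d2Pj-itle}, and for the four remainder pieces the estimate of \Cref{lmm:dQj-itle}, so that each $|S_{ij}|$ is controlled by a product of powers of $h_j$, $|y_j^\theta - x_i|$, $x_i$ and $\delta(x_j)$.

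The second step is to replace every mesh-dependent quantity by its explicit order via \Cref{lmm:hilexi}, namely $h_j \simeq h\,\delta(x_j)^{1-1/r}$ and $\delta(x_j)\simeq\delta(y_j^\theta)$, together with $|y_j^\theta - x_i|\simeq|x_j - x_i|$ and $\xi\simeq x_i$ from \Cref{lmm:gen-prop-of-MTFs}. After this substitution the bound on $|S_{ij}|$ becomes a node-evaluated function, and I would convert $\sum_j|S_{ij}|$ into an integral by reading $h_j$ as the local mesh width, $\sum_j h_j\,g(x_j)\simeq\int g(y)\,dy$. The weak singularity $|y-x_i|^{1-\alpha}$ with $1<\alpha<2$ is integrable, which is precisely what makes the near-diagonal band contribute a finite amount.

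For \textbf{Case 2} ($3\le i\le N-1$, with $j$ ranging over the diagonal band $\lceil i/2\rceil+1\le j\le\min\{m-1,N-1\}$) I would apply Case~1 of \Cref{lmm:d2Pj-itle}. In this band $x_j\simeq x_i$ and $\delta(x_j)\simeq x_i$, so the leading piece sums to $\simeq(h\,x_i^{1-1/r})^2\, x_i^{2-\alpha}\, x_i^{\alpha/2-4}=h^2 x_i^{-\alpha/2-2/r}$, exactly the stated bound, while the $Q$-remainders carry an extra factor $h_j\simeq h\,x_i^{1-1/r}$ and are therefore subdominant. For \textbf{Case 1} ($N/2\le i\le N$) the band crosses the mesh centre $x=T$, so I would use Case~2 of \Cref{lmm:d2Pj-itle} for $j=N,N+1$ and Case~3 for $N+2\le j\le m-1$; here $|y_j^\theta - x_i|\simeq T-x_i+h_N$ and $h_j\simeq h_N\simeq h\,T^{1-1/r}$ near the centre, which produces the $C(h^3+(r-1)h^2)(T-x_i+h_N)^{1-\alpha}$ term, with the subtlety that the stray factor $h_N|y_j^\theta-x_i|^{-\alpha}$ is absorbed using $h_N\le T-x_i+h_N$. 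The special value $i=N$ is handled the same way, noting that for $j<N$ one has $\delta(x_j)=x_j$ while $|x_N-x_j|=T-x_j$, so the boundary distance and the distance to the collocation point must be tracked separately.

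The main obstacle will be the bookkeeping near the diagonal and near the mesh centre: ensuring that after substituting the grading relations the exponents of $x_i$ (respectively of $T-x_i+h_N$) combine to exactly $-\alpha/2-2/r$ (respectively $1-\alpha$), and verifying that the integrals of $|y-x_i|^{1-\alpha}$ and of the genuinely singular $(r-1)|y-x_i|^{-\alpha}$ over the relevant band are dominated by their endpoint contributions rather than diverging. Keeping the $(r-1)$-factors attached to the correct pieces, so that the whole estimate degenerates to the quasi-uniform bound as $r\to 1$, is the part most likely to require care.
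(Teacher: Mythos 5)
Your plan follows the paper's proof almost line for line: both start from \eqref{eq:Sij-int}, control the leading piece with \Cref{lmm:d2Pj-itle} (Case 1 in the band $j\simeq i$, Cases 2--3 across the centre) and the four remainders with \Cref{lmm:dQj-itle}, substitute $h_j\simeq h\,\delta(x_j)^{1-1/r}$ and $|y_j^\theta-x_i|\simeq|x_j-x_i|$ via \Cref{lmm:hilexi,lmm:gen-prop-of-MTFs}, and convert the sum over $j$ into an integral in $y$. Your exponent count in Case 2, $h^2x_i^{2-2/r}\cdot x_i^{\alpha/2-4}\cdot x_i^{2-\alpha}=h^2x_i^{-\alpha/2-2/r}$, is exactly the paper's computation.

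There is, however, one step that fails as you describe it: the treatment of the singular $(r-1)$-term for $j=N,N+1$ in Case 1. You declare the weights $\theta(1-\theta)$, $\theta^3(1-\theta)$, $\theta(1-\theta)^3$ to be ``harmless constants'' and propose to absorb the stray factor $h_N|y_j^\theta-x_i|^{-\alpha}$ via $h_N\le T-x_i+h_N$. Neither device works at the diagonal. For $i=N,\,j=N$ (likewise $i=N,\,j=N+1$ and $i=N-1,\,j=N$) the interval $(x_{j-1},x_j)$ has $x_i$ as an endpoint, so $|y_j^\theta-x_i|=(1-\theta)h_N$ (resp.\ $\theta h_{N+1}$), and since $\alpha>1$ the unweighted integral $\int_0^1|y_j^\theta-x_i|^{-\alpha}\,d\theta$ diverges; multiplying by the bounded factor $T-x_i+h_N$ does not remove the non-integrable singularity, and your pointwise claim $|y_j^\theta-x_i|\simeq T-x_i+h_N$ is false for these adjacent intervals. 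The paper's fix is the elementary inequality $\theta(1-\theta)h_j\le|y_j^\theta-x_i|$, applied \emph{inside} the $\theta$-integral: it trades one factor of $h_j$ against the weight, converting $\theta(1-\theta)(r-1)h_j^3|y_j^\theta-x_i|^{-\alpha}$ into $(r-1)h_j^2|y_j^\theta-x_i|^{1-\alpha}$, whose $\theta$-integral is finite (as $1-\alpha>-1$) and $\simeq(|x_j-x_i|+h_N)^{1-\alpha}$. Equivalently, one must keep the weight and use $\int_0^1\theta(1-\theta)|y_j^\theta-x_i|^{-\alpha}\,d\theta\simeq(|x_j-x_i|+h_N)^{-\alpha}$. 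Without one of these two mechanisms the first bound of Case 1 --- and hence also the mirrored estimate for $\sum_{j=\lceil N/2\rceil+1}^{N-1}|S_{Nj}|$ that the paper derives ``similarly by Case 1'' --- cannot be closed.
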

\begin{proof}

  Case 1: From \eqref{eq:Sij-int}, using \(\theta(1-\theta) h_j \le |y_j^\theta - x_i|\), \Cref{lmm:hilexi,lmm:d2Pj-itle,lmm:dQj-itle}, it yields
  \begin{equation*}
    \begin{aligned}
      |S_{ij}| &\le C (h_j^3 + (r-1)h_j^2) \int_{0}^1 |y_j^\theta - x_i|^{1-\alpha} d\theta , \quad j=N, N+1
    \end{aligned}
  \end{equation*}
  with 
      $\int_{0}^{1} |y_j^\theta - x_i|^{1-\alpha} dy 
      \simeq (|x_j - x_i| + h_N)^{1-\alpha} $.

  On the other hand, for $j\ge N+2$, $x_i \simeq x_j \simeq T$, we have
  \begin{equation*}
    \begin{aligned}
      |S_{ij}| &\le C h_j^2 \int_{0}^1 \left(|y_j^\theta - x_i|^{1-\alpha}+ (r-1)|y_j^\theta - x_i|^{-\alpha}\right) h_j d\theta  \\
      &\le C h^2 \int_{x_{j-1}}^{x_{j}} |y - x_i|^{1-\alpha} + (r-1)|y - x_i|^{-\alpha} dy .
    \end{aligned}
  \end{equation*}
  It implies that
  \begin{equation*}
    \begin{aligned}
      \sum_{j=N+2}^{2N-\lceil\frac{N}{2}\rceil} |S_{ij}| 
      &= C h^2 \int_{x_{N+1}}^{x_{2N-\lceil\frac{N}{2}\rceil}} |y - x_i|^{1-\alpha} + (r-1)|y - x_i|^{-\alpha} dy \\
      & \le C h^2 \left( T^{2-\alpha} + (r-1) (T-x_i + h_N)^{1-\alpha} \right)  .
    \end{aligned}
  \end{equation*}
  
Case 2: for $3\le i \le N-1$, $k+1\le j\le \min\{m-1, N-1\}$, using \Cref{lmm:hilexi,lmm:d2Pj-itle,lmm:dQj-itle}, $x_i \simeq x_j$ and $h_i \simeq h_j$, we have
    \begin{equation*} 
      \begin{aligned}
        |S_{ij}| & \le C h_j^2 x_i^{\alpha/2-4} \int_{0}^{1}  |y_j^\theta - x_i|^{1-\alpha}  h_j d\theta 
         =  C h^2 x_i^{\alpha/2-2-2/r} \int_{x_{j-1}}^{x_{j}}  |y - x_i|^{1-\alpha}  dy,
      \end{aligned}
    \end{equation*}
  \begin{equation*}
    \begin{aligned}
      \sum_{k+1}^{\min\{2i-1, N-1\}} |S_{ij}|
      &\le C h^2 x_i^{\alpha/2-2-2/r} \int_{x_{k}}^{x_{\min\{2i-1, N-1\}}} |y - x_i|^{1-\alpha}  dy  
        \le  C h^2 x_i^{-\alpha/2-2/r} .
    \end{aligned}
  \end{equation*}
  We can similarly prove the last inequality by Case 1.
  The proof is completed.
\end{proof}


Finally, we focus our error analysis on the terms \(I_2\) and \(I_4\). 
\begin{lemma} \label{lmm:Ri-I2-I4-ilN/2}
Let \(I_2, I_4\) be defined by \eqref{eq:depart-Ri}. Then we have
\paragraph{Case 1}
For \(3\le i \le N\), \(k=\lceil\frac{i}{2}\rceil\), there exists
\begin{equation*}
  I_2 = \frac{2}{h_i + h_{i+1}}
  \left( \frac{1}{h_{i+1}} (T_{i+1, k} +  T_{i+1, k+1})
  - \left(\frac{1}{h_{i}}+\frac{1}{h_{i+1}}\right) T_{i,k} \right) \le C h^2 x_i^{-\alpha/2-2/r} .
\end{equation*}
\paragraph{Case 2}
For \(3\le i < N/2\), $m=2i$, there exists
\begin{equation*}
  I_4 = \frac{2}{h_i + h_{i+1}}
  \left( \frac{1}{h_{i}} (T_{i-1, 2i} +  T_{i-1, 2i-1})
  - \left(\frac{1}{h_{i}}+\frac{1}{h_{i+1}}\right) T_{i,2i} \right) \le C h^2 x_i^{-\alpha/2-2/r} .
\end{equation*}
\paragraph{Case 3}
For \(N/2 \le i \le N\), \(m=N-\lceil\frac{N}{2}\rceil+1\), there exists
\begin{equation*}
  I_4 = \frac{2}{h_i + h_{i+1}}
  \left( \frac{1}{h_{i}} (T_{i-1, m} +  T_{i-1, m-1})
  - \left(\frac{1}{h_{i}}+\frac{1}{h_{i+1}}\right) T_{i,m} \right) \le C h^2 .
\end{equation*}
\end{lemma}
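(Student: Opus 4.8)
The plan is to exploit that, with the transition column $k=\lceil i/2\rceil$, every $T$-term entering $I_2$ sits a fixed fraction away from the diagonal: the definition \eqref{def:xj} gives $x_k\simeq 2^{-r}x_i$, hence $|x_i-y|\simeq x_i$ and the kernel $K(x_i-y)\simeq x_i^{1-\alpha}$ is \emph{non-singular} on the relevant cells. The difficulty is that a naive term-by-term bound of the three summands of $I_2$ through \Cref{lmm:Tij-normalized,lmm:regularity-u}, after multiplication by the prefactor $\frac{2}{h_i+h_{i+1}}\frac{1}{h_{i+1}}\simeq h_i^{-2}$, gives each piece a size $\simeq h\,x_i^{-\alpha/2-1/r}$, which overshoots the target $h^2x_i^{-\alpha/2-2/r}$ by the factor $x_i^{1/r}/h\simeq i$. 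The whole estimate therefore hinges on exposing the second-order cancellation concealed in the stencil defining $I_2$.

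First I would linearise $I_2$ through the grid mapping $y_{i,k}$. Let $g$ denote the composite function obtained by inserting $y_{i,k}$ into the representation \eqref{lmm:Tij-express-as-int-of-function}; the matching identities \eqref{eq:prop-of-GMFs} yield $g(x_{i-1})=T_{i-1,k-1}$, $g(x_i)=T_{i,k}$ and $g(x_{i+1})=T_{i+1,k+1}$. The only summand off this diagonal is $T_{i+1,k}$, which I rewrite as $T_{i+1,k}=g(x_{i+1})-\Delta_{i+1}$ with the column first-difference $\Delta_{i+1}:=T_{i+1,k+1}-T_{i+1,k}$. Substituting into the definition of $I_2$ and completing the stencil produces the exact identity
\begin{equation*}
  I_2 = D_h^2 g(x_i) + \frac{2}{h_i+h_{i+1}}\left[\frac{1}{h_{i+1}}g(x_{i+1}) - \frac{1}{h_i}g(x_{i-1})\right] - \frac{2}{h_i+h_{i+1}}\frac{1}{h_{i+1}}\Delta_{i+1}.
\end{equation*}

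I would then bound the three pieces separately. For $D_h^2g(x_i)$ I pass $D_h^2$ inside the $\theta$-integral and apply the estimate of \Cref{lmm:d2Pj-itle} (Case 1), whose proof applies verbatim at the transition column $j=k$ since $x_k\simeq x_i$, for the $P$-part, and \Cref{lmm:dQj-itle} for the $Q$-remainder; the $\theta$-integration uses $\theta(1-\theta)h_k\le|y_k^\theta-x_i|$ together with $|y_k^\theta-x_i|\simeq x_i$ (\Cref{lmm:gen-prop-of-MTFs}) and $h_k\simeq h\,x_i^{1-1/r}$ (\Cref{lmm:hilexi}), and power counting gives $|D_h^2g(x_i)|\le C h^3 x_i^{-\alpha/2-3/r}\le C h^2 x_i^{-\alpha/2-2/r}$. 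The middle bracket is a weighted first-difference of $g$ plus a mesh-ratio defect: writing it as $\frac{1}{h_{i+1}}\big(g(x_{i+1})-g(x_{i-1})\big)+\big(\frac{1}{h_{i+1}}-\frac{1}{h_i}\big)g(x_{i-1})$ and using $|g'|\simeq |g|/x_i$ (from differentiating $P_{i,k}^\theta,Q_{i,k,4}^\theta$ and \Cref{lmm:esitmate-of-MTFs-1}) together with $\frac{|h_{i+1}-h_i|}{h_ih_{i+1}}\simeq (r-1)/x_i$ (\Cref{lmm:hi1-hi}), both contributions are bounded by $C h^2x_i^{-\alpha/2-2/r}$. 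Finally, $\Delta_{i+1}$ compares the integral of $u-\Pi_hu$ across the two adjacent non-singular cells $k,k+1$, so it gains a factor $\simeq h\,x_i^{-1/r}$ over a single $T$-term and the last piece is again bounded by $Ch^2x_i^{-\alpha/2-2/r}$.

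The term $I_4$ is handled symmetrically through the grid mapping $y_{i,m}$ (so $g_m(x_{i-1})=T_{i-1,m-1}$, $g_m(x_i)=T_{i,m}$), isolating the off-diagonal summand $T_{i-1,m}$: Case 2 ($m=2i$) is the exact mirror of $I_2$ because $x_m\simeq x_i$, while in Case 3 ($N/2\le i\le N$) one has $x_i\simeq\delta(x_i)\simeq T$ and $|x_i-x_m|\simeq T$, so every $x_i$-power collapses to a constant and the bound reduces to $Ch^2$. The main obstacle throughout is exactly the off-diagonal summand ($T_{i+1,k}$, resp.\ $T_{i-1,m}$): it obstructs reading $I_2,I_4$ as a clean $D_h^2$ of one grid-mapped function and forces the two correction terms, whose smallness --- one full order below the naive size --- is the delicate point and is where the slow variation of the mesh (\Cref{lmm:hi1-hi}) and of the grid mapping (\Cref{lmm:esitmate-of-MTFs-1}) is decisive.
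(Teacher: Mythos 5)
Your proposal is correct in substance, but it organizes the cancellation genuinely differently from the paper, so a comparison is worthwhile. The paper never forms a second difference at the transition column: starting from the same observation you make (term-by-term bounds overshoot by a factor $\simeq i$), it splits the stencil (see \eqref{eq:I24-depart}) into two \emph{first} differences plus a mesh defect,
\begin{equation*}
  \frac{1}{h_{i+1}}\left(T_{i+1,k}-T_{i,k}\right)
  +\frac{1}{h_{i+1}}\left(T_{i+1,k+1}-T_{i,k}\right)
  +\left(\frac{1}{h_{i+1}}-\frac{1}{h_{i}}\right)T_{i,k},
\end{equation*}
and every piece is covered by a lemma already on the shelf: the vertical difference by $|D_hK_y(x_i)|\simeq|x_i-y|^{-\alpha}$ (\Cref{lmm:Dh2Kyxi}, combined with \Cref{lmm:Dyjleh2ya/2m2/r}), the skew difference by \Cref{lmm:dQj-itle} with $l=3$, and the defect by \Cref{lmm:hi1-hi}. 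You instead complete the stencil to the full skew second difference $D_h^2g(x_i)=S_{i,k}$ by adding and subtracting $T_{i-1,k-1}$ --- a term that does not occur in $I_2$ at all --- and pay with two correction terms. This buys reuse of the $S_{ij}$ machinery of \Cref{lmm:estimate-Sij}, but at three costs the paper avoids. (i) \Cref{lmm:d2Pj-itle} (Case 1) is stated only for $j\ge\lceil i/2\rceil+1$, so invoking it at $j=k$ is an extension, not a citation; your justification ($x_k\simeq x_i$ still holds) is right and the extension does go through. (ii) The step ``$|g'|\simeq|g|/x_i$'' is not literally meaningful: the $Q$-remainder part of $g$ carries the cell-dependent Taylor points $\eta_{\cdot,1}^\theta,\eta_{\cdot,2}^\theta$ and is defined only at the nodes, so $g$ is not differentiable there; the claim must be recast as one-step difference-quotient bounds, which is exactly what \Cref{lmm:dQj-itle} supplies, so this is reparable but imprecise as written. (iii) Most substantively, the horizontal difference $\Delta_{i+1}=T_{i+1,k+1}-T_{i+1,k}$ is a genuinely new object --- no lemma in the paper bounds same-row, adjacent-column differences --- and you only assert its smallness. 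The assertion is true: telescoping $h_{k+1}^3-h_k^3$ (via \Cref{lmm:hi1-hi}), $u''(\eta_{k+1}^\theta)-u''(\eta_{k}^\theta)$ (via \Cref{lmm:regularity-u}), and $K(x_{i+1}-y_{k+1}^\theta)-K(x_{i+1}-y_{k}^\theta)$ (via the mean value theorem) gives $|\Delta_{i+1}|\le C\,h_k^4\,x_i^{-\alpha/2-2}$, which is exactly what your power count needs; but this is the one substantive estimate your plan leaves unexecuted. In sum, both arguments extract the same second-order cancellation; the paper's grouping is leaner because it needs only first differences and existing lemmas, while your route is workable but requires one lemma extension and one genuinely new estimate.
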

\begin{proof}
Since
\begin{equation} \label{eq:I24-depart}
  \begin{aligned}
     & \frac{1}{h_{i+1}} \left(T_{i+1, k} +  T_{i+1, k+1}\right)
    - \left(\frac{1}{h_{i}}+\frac{1}{h_{i+1}}\right) T_{i,k}   \\
     & = \frac{1}{h_{i+1}} \left(T_{i+1, k} -  T_{i, k}\right) + \frac{1}{h_{i+1}} \left(T_{i+1, k+1} -  T_{i, k}\right) + \left(\frac{1}{h_{i+1}} - \frac{1}{h_{i}}\right) T_{i,k} .
  \end{aligned}
\end{equation}
According to \(x_i - x_k \simeq x_i \simeq x_k\), \Cref{lmm:Dyjleh2ya/2m2/r,lmm:Dh2Kyxi,lmm:hilexi}, we have
\begin{equation*}
  \begin{aligned}
    \frac{1}{h_{i+1}} (T_{i+1, k} -  T_{i, k})
     & = \int_{x_{k-1}}^{x_k} (u(y)-\Pi_hu(y)) D_h K_y (x_i) dy 
     \le C  h^2 x_i^{-\alpha/2-2/r} h_k .
  \end{aligned}
\end{equation*}

From \Cref{lmm:Dyj,lmm:Tij-normalized} and \eqref{def:Qj-itheta-jlN}, 
we can obtain
\begin{equation*}
  \begin{aligned}
    \frac{1}{h_{i+1}} \left(T_{i+1, k+1} -  T_{i, k}\right) 
     & = \int_{0}^{1} \frac{\theta(\theta-1)}{2} \frac{Q_{i,k;3}^\theta(x_{i+1})u''(\eta_{k+1}^\theta) - Q_{i,k;3}^\theta(x_i) u''(\eta_{k}^\theta)}{h_{i+1}} d\theta 
  \end{aligned}
\end{equation*}
with \(\eta_{k}^\theta \in (x_{k-1}, x_k)\) and \(\eta_{k+1}^\theta \in (x_k, x_{k+1})\).
Using \Cref{lmm:dQj-itle,lmm:hilexi}, we have
\begin{equation*}
  \frac{1}{h_{i+1}} |T_{i+1, k+1} -  T_{i, k}| \le C h^2 x_i^{-\alpha/2-2/r} h_k.
\end{equation*}

For the third term in \eqref{eq:I24-depart}, using $h_i \simeq h_k$, \Cref{lmm:hilexi,lmm:hi1-hi,lmm:Dyjleh2ya/2m2/r}, it yields
\begin{equation*}
  \begin{aligned}
    \frac{h_{i+1}-h_{i}}{h_i h_{i+1}} T_{i,k}
      \le C(r-1) h^2 x_i^{-\alpha/2-2/r} h_k.
  \end{aligned}
\end{equation*}
Then, the desired result of Case 1 is obtained.
The Case 2 and Case 3 for \(I_4\) can be similarly proven as the way in Case 1; the details are omitted here.
\end{proof}


\begin{proof} [\bf Proof of \Cref{thm:Ri-ilessN/2}]
For \(1\le i < N/2\) 
with $m=2i$ in \eqref{eq:depart-Ri}, 
combining
\Cref{lmm:R1R2}, 
\Cref{lmm:Ri-I1}, 
Cases 1 and 2 of \Cref{lmm:Ri-I2-I4-ilN/2}, 
Case 2 of \Cref{lmm:estimate-Sij} and
Case 1 of \Cref{lmm:Ri-I5-1}, 
the proof is completed.
\end{proof}

\begin{proof} [\bf Proof of \Cref{thm:Ri-N/2le-i-leN}]
For \(N/2 \le i \le N \) 
with $m=2N-\lceil N/2 \rceil +1$ in \eqref{eq:depart-Ri},
we split \(I_3\) as
 \begin{equation} \label{eq:I3-depart}
  \begin{aligned}
    I_3 &= \sum_{j=k+1}^{m-1} S_{ij}
         = \sum_{j=k+1}^{N-1} S_{ij} + (S_{iN}+S_{i,N+1}) + \sum_{j=N+2}^{m-1}  S_{ij}.
  \end{aligned}
\end{equation}
According to
\Cref{lmm:Ri-I1}, 
Cases 1 and 3 of \Cref{lmm:Ri-I2-I4-ilN/2}, 
\Cref{lmm:estimate-Sij}
and 
Case 2 of \Cref{lmm:Ri-I5-1}, 
the desired result is obtained.
\end{proof}

\section{Convergence analysis}
\label{sec:proof-convergence}

We can now prove our main convergence result for \Cref{thm:convergence}.

\subsection{Some properties of the stiffness matrix}

In this subsection, we show some properties of the stiffness matrix \(A\) defined by \eqref{eq:equation_matrix} and construct an appropriate  right-preconditioner for the resulting matrix algebraic equation.

\begin{lemma} \label{lmm:AisM}
  The stiffness matrix \(A\) defined by \eqref{eq:equation_matrix} is strictly diagonally dominant, with positive entries on the main diagonal and negative off-diagonal entries.
  In particular, there exists a constant \(C_A\) such that
  \begin{equation*}
    \begin{aligned}
      \sum_{j=1}^{2N-1} a_{ij}
      \ge  C_A (x_i^{-\alpha} + (2T-x_i)^{-\alpha}) \quad \text{with} \quad C_A= \frac{\kappa_\alpha(\alpha-1)}{\Gamma(2-\alpha)} 2^{-r\alpha}.
    \end{aligned}
  \end{equation*}
\end{lemma}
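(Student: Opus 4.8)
The plan is to treat the row-sum estimate as the main quantitative content and to read off the sign pattern and strict diagonal dominance from it, together with a convexity argument for the off-diagonal entries. First I would record the elementary but crucial fact that on a nonuniform mesh the operator in \eqref{def:Dh2} is twice the second divided difference, so that $D_h^2 g(x_i)=g''(\xi_i)$ for some $\xi_i\in(x_{i-1},x_{i+1})$ whenever $g\in C^2[x_{i-1},x_{i+1}]$. Summing \eqref{eq:aij} over $j$ and using linearity of $I^{2-\alpha}$ and $D_h^2$ gives $\sum_{j=1}^{2N-1}a_{ij}=-D_h^2 I^{2-\alpha}\Phi(x_i)$, where $\Phi=\sum_{j=1}^{2N-1}\phi_j$ is the piecewise-linear function equal to $1$ at every interior node and vanishing at $x_0,x_{2N}$. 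On $\Omega$ one has $\Phi=1-\phi_0-\phi_{2N}$, so that, by \eqref{def:I2-a},
\[
  I^{2-\alpha}\Phi=\psi-I^{2-\alpha}\phi_0-I^{2-\alpha}\phi_{2N},\qquad \psi(x):=\frac{\kappa_\alpha}{\Gamma(3-\alpha)}\bigl(x^{2-\alpha}+(2T-x)^{2-\alpha}\bigr),
\]
the last expression being the explicit Riesz potential of the constant $1$. A direct differentiation shows $-\psi''(x)=\frac{\kappa_\alpha(\alpha-1)}{\Gamma(2-\alpha)}\bigl(x^{-\alpha}+(2T-x)^{-\alpha}\bigr)>0$, i.e.\ $\psi$ is concave, and this is exactly the target profile before the factor $2^{-r\alpha}$.

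For the off-diagonal signs I would write $a_{ij}=-\int_{x_{j-1}}^{x_{j+1}}D_h^2[K(\cdot-y)](x_i)\,\phi_j(y)\,dy$. When $|i-j|\ge 2$ the support $[x_{j-1},x_{j+1}]$ is disjoint from $(x_{i-1},x_{i+1})$, so $x\mapsto K(x-y)=\tfrac{\kappa_\alpha}{\Gamma(2-\alpha)}|x-y|^{1-\alpha}$ is smooth and strictly convex on the stencil; hence $D_h^2[K(\cdot-y)](x_i)>0$ and $a_{ij}<0$. The adjacent entries $j=i\pm 1$ are the delicate case, since then $y$ may lie inside $(x_{i-1},x_{i+1})$ and the three-point difference of the kernel need not be one-signed; here I would fall back on the closed form of $\tilde a_{ij}$ and verify $a_{i,i\pm1}<0$ by direct computation (on a uniform mesh this reduces to positivity of the fourth central difference of $|d|^{3-\alpha}$, which holds for $1<\alpha<2$).

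For the row-sum bound, for interior indices $2\le i\le 2N-2$ the two corrections $D_h^2 I^{2-\alpha}\phi_0(x_i)$ and $D_h^2 I^{2-\alpha}\phi_{2N}(x_i)$ are nonnegative, because the boundary hats are supported away from the stencil and $I^{2-\alpha}\phi_0,\,I^{2-\alpha}\phi_{2N}$ are convex there. Dropping them and applying the mean value form of the divided difference to the smooth, concave $\psi$ gives
\[
  \sum_{j=1}^{2N-1}a_{ij}\ \ge\ -\psi''(\xi_i)=\frac{\kappa_\alpha(\alpha-1)}{\Gamma(2-\alpha)}\bigl(\xi_i^{-\alpha}+(2T-\xi_i)^{-\alpha}\bigr),\qquad \xi_i\in(x_{i-1},x_{i+1}).
\]
It then remains to compare $\xi_i$ with $x_i$: since $\xi_i<x_{i+1}\le 2^r x_i$ and $2T-\xi_i<2T-x_{i-1}\le 2^r(2T-x_i)$ (both mesh ratios following from \eqref{def:xj} and \Cref{lmm:hilexi}), I obtain $\xi_i^{-\alpha}+(2T-\xi_i)^{-\alpha}\ge 2^{-r\alpha}\bigl(x_i^{-\alpha}+(2T-x_i)^{-\alpha}\bigr)$, which is precisely the claimed bound with $C_A=\frac{\kappa_\alpha(\alpha-1)}{\Gamma(2-\alpha)}2^{-r\alpha}$. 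The endpoints $i=1$ and $i=2N-1$ escape the mean value form because $\psi''$ blows up at $\partial\Omega$; for these I would substitute $x_0=0$, $x_1=Th^r$, $x_2=2^rTh^r$ from \eqref{def:xj} directly into $-D_h^2\psi(x_1)$ and estimate the boundary corrections by hand, the slack in $2^{-r\alpha}$ being available to absorb them.

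Finally, the sign pattern and the positive row-sum combine to give the matrix structure: since every off-diagonal entry is negative, $\sum_{j\ne i}|a_{ij}|=-\sum_{j\ne i}a_{ij}$, so $a_{ii}=\sum_j a_{ij}-\sum_{j\ne i}a_{ij}=(\text{row sum})+\sum_{j\ne i}|a_{ij}|>0$, and moreover $a_{ii}-\sum_{j\ne i}|a_{ij}|=\sum_j a_{ij}>0$; that is, $A$ is strictly diagonally dominant with positive diagonal and negative off-diagonal entries. I expect the main obstacle to be the two boundary nodes $i=1,2N-1$: there both the adjacent-entry sign and the row-sum lower bound lose their clean convexity/mean value justification and must be obtained by explicit computation with the graded mesh, with the constant $2^{-r\alpha}$ doing the work of controlling the boundary layer.
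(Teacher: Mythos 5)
Your route for the interior rows $2\le i\le 2N-2$ is sound and genuinely different from the paper's: the paper telescopes the quadrature weights into the explicit boundary functions $g_0+g_{2N}$ and applies the mean-value form of $D_h^2$ twice, whereas you split $I^{2-\alpha}\Phi=\psi-I^{2-\alpha}\phi_0-I^{2-\alpha}\phi_{2N}$ with $\psi$ the Riesz potential of the constant $1$, discard the two corrections by convexity of the boundary-hat potentials away from their supports, and apply the mean-value form only to $\psi$; together with the mesh ratios $x_{i+1}\le 2^r x_i$ and $2T-x_{i-1}\le 2^r(2T-x_i)$ this recovers the bound with the same constant $C_A$. The same goes for deducing $a_{ii}>0$ and diagonal dominance from the sign pattern plus positive row sums instead of the paper's explicit formula for $a_{ii}$. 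But that deduction, and the lemma as a whole, hinges on two steps you defer to ``direct computation,'' and those are exactly where the difficulty of this lemma sits.

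First, the adjacent entries $a_{i,i\pm1}<0$. On the graded mesh the three steps $h_{i-1},h_i,h_{i+1}$ are all different, and the paper needs a dedicated two-variable inequality (\Cref{eq:ineq_fxy}, proved via Jensen's inequality and a monotonicity argument) applied to the explicit expansion of $a_{i,i-1}$ in the ratios $s=h_{i-1}/h_i$, $t=h_{i+1}/h_i$. Your remark that on a uniform mesh this reduces to positivity of a fourth central difference of $|d|^{3-\alpha}$ is true but does not address the graded case, and nothing in your proposal supplies that argument. Second, the boundary rows $i=1,\,2N-1$. Your stated reason for excluding them (blow-up of $\psi''$) is not the actual obstruction: \Cref{lmm:Dh2simd2} only requires $\psi\in C(\bar\Omega)\cap C^2(\Omega)$, so $-D_h^2\psi(x_1)=-\psi''(\xi_1)$ with $\xi_1<x_2=2^rx_1$ already gives $-D_h^2\psi(x_1)\ge C_A x_1^{-\alpha}$, the blow-up only helping. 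The genuine obstruction is the correction $D_h^2 I^{2-\alpha}\phi_0(x_1)$: its stencil $[x_0,x_2]$ overlaps the support of $\phi_0$, so your convexity argument fails there, and by scaling ($I^{2-\alpha}\phi_0\sim h_1^{2-\alpha}$ on the stencil, divided by $\sim h_1^{2}$, with $x_1=h_1$) this term is of the \emph{same} order $x_1^{-\alpha}$ as the main term, not lower order. Hence ``the slack in $2^{-r\alpha}$'' cannot absorb it a priori; one must actually carry out the explicit boundary computation, which is what the paper does by proving monotonicity of the function $h(t)$ (with $t=2^r$, $h(1)=0$) to get $D_h^2 g_0(x_1)\ge C_A x_1^{-\alpha}$. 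Until these two computations are done, the sign pattern, the diagonal positivity, and the row-sum bound at the boundary all remain unproved.
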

\begin{proof}
From \eqref{eq:aij}, there exist
 \begin{equation*}
     a_{ii} = \frac{\kappa_\alpha}{\Gamma(4-\alpha)}\frac{4}{h_i h_{i+1}} \left( h_i^{2-\alpha} + h_{i+1}^{2-\alpha} - (h_i+h_{i+1})^{2-\alpha} \right) > 0,
 \end{equation*}
 where we use $ 1 + t^\theta >  (1+t)^\theta$ with $t=\frac{h_{i+1}}{h_{i}}$ for $\theta\in (0,1)$.
 
 Let \(s = \frac{h_{i-1}}{h_{i}}\) and \(t=\frac{h_{i+1}}{h_{i}}\), 
 by \Cref{eq:ineq_fxy},  we can check that
 \begin{equation*}
     \begin{aligned}
        a_{i,i-1}
         &=\frac{-\kappa_\alpha}{\Gamma(4-\alpha)} \frac{1}{h_{i-1} h_{i} h_{i+1}} \Big(
            h_{i+1} h_{i-1}^{3-\alpha} - (h_{i}+h_{i+1})(h_{i-1}+h_{i})^{3-\alpha}     \\
            & \quad + h_i(h_{i-1} + h_{i}  + h_{i-1})^{3-\alpha} + (h_{i-1}+h_{i}) (h_{i}+h_{i+1})h_i^{2-\alpha}  \\
            &\quad - (h_{i-1}+h_{i})(h_{i} + h_{i+1})^{3-\alpha}  + h_{i-1} h_{i+1} h_{i}^{2-\alpha} + h_{i-1} h_{i+1}^{3-\alpha}
         \Big) < 0.
     \end{aligned}
 \end{equation*}
 
 For $|i-j|\ge 2$, $x_{i+1} - y$, $x_{i} - y$ and $x_{i-1} - y$ have the same sign ($> 0$ or $< 0$) for $y\in (x_{j-1}, x_{j+1})$, it yields
    $ \frac{h_i}{h_{i}+h_{i+1}} |x_{i+1} - y| +  \frac{h_{i+1}}{h_{i}+h_{i+1}} |x_{i-1} - y| = |x_i - y|$.
 Since $x^{1-\alpha}$ is a convex function for $\alpha\in(1,2)$, by Jensen's inequality, we have
 \begin{equation*}
     \frac{h_i}{h_{i}+h_{i+1}} |x_{i+1} - y|^{1-\alpha} +  \frac{h_{i+1}}{h_{i}+h_{i+1}} |x_{i-1} - y|^{1-\alpha} > |x_i - y|^{1-\alpha},
 \end{equation*}
 which implies that \(D_h^2 K_y(x_i) > 0\) 
 by \eqref{def:Dh2} and \eqref{def:Kyx}.
 Thus, from \eqref{eq:aij}, we get
 \begin{equation*}
     a_{ij} = - D_h^2 I^{2-\alpha} \phi_j (x_i)
      = - \int_{x_{j-1}}^{x_{j+1}} \phi_j(y) D_h^2 K_y(x_i) dy < 0 .
 \end{equation*}

 We next prove that the stiffness matrix \(A\) defined by \eqref{eq:equation_matrix} is strictly diagonally dominant.
 For the quadrature coefficients $\tilde{a}_{ij}$ in \eqref{eq:aij}, we calculate that
  \begin{equation*}
    \begin{aligned}
      \sum_{j=1}^{2N-1} \tilde{a}_{ij}
      = g_0(x_i) + g_{2N}(x_i) 
    \end{aligned}
  \end{equation*}
  with
  $  g_{0}(x) = \frac{-\kappa_\alpha}{\Gamma(4-\alpha)} \frac{|x-x_0|^{3-\alpha} - |x-x_1|^{3-\alpha}}{h_1} $ and
  $  g_{2N}(x) = \frac{-\kappa_\alpha}{\Gamma(4-\alpha)} \frac{|x_{2N}-x|^{3-\alpha} - |x_{2N-1}-x|^{3-\alpha}}{h_{2N}} $.
  It implies that
    $  \sum_{j=1}^{2N-1}a_{ij}        
      =     D_h^2 g_0(x_i) + D_h^2 g_{2N}(x_i) $ by \eqref{eq:aij}.
  
  For \(i=1\), there exists
  \begin{equation*}
    \begin{aligned}
      D_h^2 g_0(x_1) 
        &= 2\kappa_\alpha \frac{1+(2^r-1)^{3-\alpha} + 2(2^r-1) - (2^r)^{3-\alpha} }{\Gamma(4-\alpha)2^r (2^r-1)} x_1^{-\alpha} 
        \! \ge  \frac{\kappa_\alpha(\alpha-1)}{\Gamma(2-\alpha)}2^{-r\alpha} x_1^{-\alpha} ,
    \end{aligned}
  \end{equation*}
  since
  $ h(t) = 2\left( 1+(t-1)^{3-\alpha} + 2(t-1) - t^{3-\alpha} \right) - (3-\alpha)(2-\alpha)(\alpha-1)( t^{2-\alpha} - t^{1-\alpha}) $ is a increasing function for $t=2^r\ge 1$ and $h(1)=0$.

  For \(i \ge 2\), by \Cref{lmm:Dh2simd2}, there exist $\xi \in (x_{i-1}, x_{i+1})$ and $\eta \in (x_0, x_1)$ such that
  \begin{equation*}
    \begin{aligned}
      D_h^2 g_0(x_i) &=  g_0''(\xi) 
                 =  -\kappa_\alpha \frac{|\xi-x_0|^{1-\alpha} - |\xi-x_1|^{1-\alpha}}{\Gamma(2-\alpha)h_1}                                               
                  = \frac{\kappa_\alpha(\alpha-1)}{\Gamma(2-\alpha)}  |\xi-\eta|^{-\alpha}      \\
                 & \ge \frac{\kappa_\alpha(\alpha-1)}{\Gamma(2-\alpha)} x_{i+1}^{-\alpha}  
                 \ge \frac{\kappa_\alpha(\alpha-1)}{\Gamma(2-\alpha)} 2^{-r\alpha} x_{i}^{-\alpha} .
    \end{aligned}
  \end{equation*}
  Then we have
  \(
    D_h^2 g_0(x_i) \ge C_A x_i^{-\alpha}
  \) with $C_A = \frac{\kappa_\alpha (\alpha-1)}{\Gamma(2-\alpha)}2^{-r\alpha} $ for $i\ge 1$.
  We can similarly prove
  \(
    D_h^2 g_{2N}(x_i) \ge C_A (2T-x_i)^{-\alpha} 
  \). The proof is completed.
\end{proof}

Let us first introduce the quasi-preconditioner
\begin{equation} \label{def:G}
  G = \text{diag}(\delta(x_1), ..., \delta(x_{2N-1})),
\end{equation}
where $\delta(x)$ is defined by \eqref{def:delta}.
Then we have
\begin{lemma} \label{lmm:AGhasSingularity}
  Let \(\tilde{B}:= AG\) and $A$ be defined by \eqref{eq:equation_matrix}. 
  Then the matrix $\tilde{B} = (\tilde{b}_{ij}) \in \mathbb{R}^{(2N-1) \times (2N-1)}$ has positive entries on the main diagonal and negative off-diagonal entries.
  In particular, there exist constants \(C_{\tilde{B}}, C_B\) such that
  \begin{equation*}
    \sum_{j=1}^{2N-1} \tilde{b}_{ij}
    \ge  C_B (T - \delta(x_{i}) + h_N)^{1-\alpha} -C_{\tilde{B}} (x_i^{1-\alpha} + (2T-x_i)^{1-\alpha}) .
  \end{equation*}
  with
  \(
    C_B = \frac{2\kappa_\alpha}{\Gamma(2-\alpha)}, \;
    C_{\tilde{B}} = \frac{\kappa_\alpha}{\Gamma(2-\alpha)} 2^{r(\alpha-1)} \). 
\end{lemma}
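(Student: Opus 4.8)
The plan is to reduce the whole statement to the action of the discrete operator on the tent function \(\delta\). Since \(G\) in \eqref{def:G} is diagonal, we have \(\tilde b_{ij}=a_{ij}\,\delta(x_j)\), and because \(\delta(x_j)=\min\{x_j,2T-x_j\}>0\) for every interior node \(1\le j\le 2N-1\), the sign pattern of \(\tilde B\) is inherited verbatim from that of \(A\) in \Cref{lmm:AisM}: positive diagonal and negative off-diagonal entries. For the row sums I would exploit that \(\delta\) is linear on \((0,T)\) and on \((T,2T)\) with its only kink at the node \(x_N=T\), so its piecewise linear interpolant satisfies \(\Pi_h\delta=\delta\) and \(\delta\in S_h\). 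Hence, exactly as in \eqref{eq:AUhat},
\[
  \sum_{j=1}^{2N-1}\tilde b_{ij}=\sum_{j=1}^{2N-1}a_{ij}\,\delta(x_j)=-D_h^2 I^{2-\alpha}\Pi_h\delta(x_i)=-D_h^2 I^{2-\alpha}\delta(x_i).
\]

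The key computational step is a closed form for the Riesz potential of the tent function. Integrating the kernel \(K\) of \eqref{def:I2-a} against \(\delta\) and integrating by parts twice — the boundary terms vanish since \(\delta(0)=\delta(2T)=0\), leaving only the jump of \(\delta'\) at \(x=T\) — yields
\[
  I^{2-\alpha}\delta(x)=\frac{\kappa_\alpha}{\Gamma(4-\alpha)}\Big(x^{3-\alpha}-2\,|x-T|^{3-\alpha}+(2T-x)^{3-\alpha}\Big).
\]
This mirrors the structure \(g_0+g_{2N}\) found in \Cref{lmm:AisM}, but now with a genuine central contribution produced by the kink of \(\delta\) at \(T\). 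It remains to bound \(-D_h^2\) applied at \(x_i\) to each of the three power terms \(x^{3-\alpha}\), \(|x-T|^{3-\alpha}\) and \((2T-x)^{3-\alpha}\), using \(\Gamma(4-\alpha)=(3-\alpha)(2-\alpha)\Gamma(2-\alpha)\).

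For the two boundary terms, \(x^{3-\alpha}\) is smooth and convex away from \(0\), so by the mean-value representation of \Cref{lmm:Dh2simd2} there is \(\xi\in(x_{i-1},x_{i+1})\) with \(-D_h^2 x^{3-\alpha}\big|_{x_i}=-(3-\alpha)(2-\alpha)\,\xi^{1-\alpha}\); since \(x^{1-\alpha}\) is decreasing and the grading gives \(x_{i-1}\ge 2^{-r}x_i\) for \(i\ge 2\), this is at least \(-(3-\alpha)(2-\alpha)\,2^{r(\alpha-1)}x_i^{1-\alpha}\), which after the factor \(\kappa_\alpha/\Gamma(4-\alpha)\) produces exactly the \(-C_{\tilde B}x_i^{1-\alpha}\) contribution, and symmetrically \(-C_{\tilde B}(2T-x_i)^{1-\alpha}\) from \((2T-x)^{3-\alpha}\). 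For the central term, convexity of \(|x-T|^{3-\alpha}\) makes \(D_h^2|x-T|^{3-\alpha}\big|_{x_i}\) positive, and I would establish \(D_h^2|x-T|^{3-\alpha}\big|_{x_i}\ge(3-\alpha)(2-\alpha)(|x_i-T|+h_N)^{1-\alpha}\); multiplied by \(2\kappa_\alpha/\Gamma(4-\alpha)\) this is the leading term \(C_B(T-\delta(x_i)+h_N)^{1-\alpha}\), recalling \(T-\delta(x_i)=|x_i-T|\) from \eqref{def:delta}. Summing the three contributions gives the claimed lower bound.

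The main obstacle is this last central estimate near the center. For nodes with \(x_{i+1}<T\) (or \(x_{i-1}>T\)) the function is \(C^2\) on \((x_{i-1},x_{i+1})\), so the mean-value representation together with the monotone spacing \(h_i\le h_N\) for \(i\le N\) delivers the bound cleanly. At the nodes \(i=N-1,N,N+1\) adjacent to the kink, however, \(|x-T|^{3-\alpha}\) is not \(C^2\) and the regularization \(+h_N\) genuinely appears, so one must compute \(D_h^2|x-T|^{3-\alpha}\) directly; for instance at \(i=N\) (where \(h_{N+1}=h_N\) by symmetry) one gets \(2h_N^{1-\alpha}\), and the required inequality reduces to the elementary \(2\ge(3-\alpha)(2-\alpha)\), valid for \(\alpha\in(1,2)\). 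The boundary nodes \(i=1\) and \(i=2N-1\), where \(x^{3-\alpha}\) or \((2T-x)^{3-\alpha}\) loses \(C^2\) regularity, likewise require a separate direct computation, precisely as the case \(i=1\) was handled in \Cref{lmm:AisM}; symmetry of the mesh about \(T\) then extends all estimates from \(1\le i\le N\) to \(N\le i\le 2N-1\).
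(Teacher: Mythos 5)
Your proposal is correct and follows essentially the same route as the paper: both exploit $\Pi_h\delta=\delta$ to reduce the row sums to $-D_h^2 I^{2-\alpha}\delta(x_i)$, split the resulting closed form into the kink term $2|x-T|^{3-\alpha}$ (yielding the $C_B$ term) and the boundary terms $x^{3-\alpha}+(2T-x)^{3-\alpha}$ (yielding the $-C_{\tilde B}$ term), and then combine the mean-value representation of $D_h^2$ away from the singular nodes with a direct computation at $i=N$, where the same elementary inequality $2\ge(3-\alpha)(2-\alpha)$ closes the argument. The only cosmetic difference is that you derive the closed form for $I^{2-\alpha}\delta$ by integrating by parts against the distributional second derivative of $\delta$, whereas the paper obtains it by summing the quadrature coefficients $\tilde a_{ij}$; your explicit flagging of the separate direct computations needed at $i=1$ and $i=2N-1$ for the upper bound on the boundary part is a point the paper passes over with ``we can similarly prove.''
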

\begin{proof}
  From \eqref{eq:aij} and \eqref{def:G}, it yields
  \begin{equation*}
    \tilde{b}_{ij} = a_{ij} \delta(x_j) = -  \frac{2}{h_i + h_{i+1}} \left( \frac{1}{h_{i+1}} \tilde{a}_{i+1,j} - (\frac{1}{h_{i}}+\frac{1}{h_{i+1}})\tilde{a}_{i,j} + \frac{1}{h_{i}} \tilde{a}_{i-1,j}\right) \delta(x_j).
  \end{equation*}

  Since
  \(
    \delta(x) \equiv \Pi_h \delta(x) = \sum_{j=1}^{2N-1} \phi_j(x) \delta(x_j)
  \) by \eqref{def:interp} and \eqref{def:delta},
  from the definition of the quadrature coefficients $\tilde{a}_{ij}$ in \eqref{eq:aij}, we have
  \begin{equation*}
    \begin{aligned}
      \sum_{j=1}^{2N-1} \tilde{a}_{ij} \delta(x_j)
                  & = \sum_{j=1}^{2N-1} I^{2-\alpha} \phi_j(x_i) \delta(x_j) 
                    = I^{2-\alpha} \delta(x_i)
                  =  -p(x_i) + q(x_i)
    \end{aligned}
  \end{equation*}
  with
    $  p(x) = \frac{2 \kappa_\alpha}{\Gamma(4-\alpha)}|T-x|^{3-\alpha}$ and
    $  q(x) = \frac{\kappa_\alpha}{\Gamma(4-\alpha)}\left( x^{3-\alpha} + (2T-x)^{3-\alpha} \right) $.
  It implies that
      $\sum_{j=1}^{2N-1} \tilde{b}_{ij} =\sum_{j=1}^{2N-1} a_{ij} \delta(x_j)     
       =  D_h^2  p (x_i) -  D_h^2 q(x_i)$ .
  
  For \(i \neq N\), by \Cref{lmm:Dh2simd2}, there exists $\xi \in (x_{i-1}, x_{i+1})$ such that
  \begin{equation*}
    \begin{aligned}
      D_h^2 p(x_i) 
            & = \frac{2 \kappa_\alpha}{\Gamma(2-\alpha)} |T - \xi|^{1-\alpha}
            \ge \frac{2 \kappa_\alpha}{\Gamma(2-\alpha)} (T - \delta(x_{i}) + h_N)^{1-\alpha} 
    \end{aligned}
  \end{equation*}
  and for $i=N$, it yields
  \begin{equation*}
    \begin{aligned}
      D_h^2 p(x_N) 
          & = \frac{4 \kappa_\alpha}{\Gamma(4-\alpha) h_N^2} h_N^{3-\alpha}
          \ge C_B (T - \delta(x_N)+h_N)^{1-\alpha} .
    \end{aligned}
  \end{equation*}


  We can similarly prove the following inequality
  \begin{equation*}
    \begin{aligned}
      D_h^2 q(x_i) 
          & \le C_{\tilde{B}}  (x_{i}^{1-\alpha} + (2T-x_{i})^{1-\alpha}), \quad i=1,\cdots, 2N-1 .
    \end{aligned}
  \end{equation*}
  The proof is completed.
\end{proof}

Noted that $\tilde{B}=AG$ in \Cref{lmm:AGhasSingularity} is not diagonally dominant, e.g., 
$\sum_{j=1}^{2N-1} \tilde{b}_{ij} < 0 $ when $x_i$ is near the boundary.
We introduce the preconditioner $\lambda I + \mu G$ as following.
\begin{lemma}\label{thm:ALGisM}
  Let \(B := A(\lambda I+\mu G)\) 
    with $\lambda= 1+ 2^{r(\alpha-1)} T $, $\mu = (\alpha-1)2^{-r\alpha-1}$. 
  Then $B = (b_{ij})\in \mathbb{R}^{(2N-1) \times (2N-1)}$ is strictly diagonally dominant, with positive entries on the main diagonal and negative off-diagonal entries. Moreover, we have
  \begin{equation*}
    \sum_{j=1}^{2N-1} b_{ij} \ge C_A \left( (x_i^{-\alpha} + (2T-x_i)^{-\alpha}) + (T - \delta(x_{i}) + h_N)^{1-\alpha} \right).
  \end{equation*}
\end{lemma}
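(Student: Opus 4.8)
The plan is to exploit the linearity of the construction. Since $G$ is diagonal and $\tilde B = AG$, I would write $B = A(\lambda I + \mu G) = \lambda A + \mu \tilde B$, so that entrywise $b_{ij} = \lambda a_{ij} + \mu \tilde b_{ij}$. Because $\alpha \in (1,2)$ and $r \ge 1$, both parameters $\lambda = 1 + 2^{r(\alpha-1)}T$ and $\mu = (\alpha-1)2^{-r\alpha-1}$ are strictly positive. The sign pattern is then immediate from \Cref{lmm:AisM,lmm:AGhasSingularity}: on the diagonal $b_{ii} = \lambda a_{ii} + \mu \tilde b_{ii} > 0$ since $a_{ii}, \tilde b_{ii} > 0$, and off the diagonal $b_{ij} = \lambda a_{ij} + \mu \tilde b_{ij} < 0$ since $a_{ij}, \tilde b_{ij} < 0$ for $i \ne j$. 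Thus the claimed sign structure requires no new computation.

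For the row-sum bound I would add the two estimates already available. By \Cref{lmm:AisM,lmm:AGhasSingularity},
\[
\sum_{j=1}^{2N-1} b_{ij} \ge \lambda C_A(x_i^{-\alpha} + (2T-x_i)^{-\alpha}) + \mu C_B(T - \delta(x_i) + h_N)^{1-\alpha} - \mu C_{\tilde B}(x_i^{1-\alpha} + (2T-x_i)^{1-\alpha}).
\]
The one genuine idea is to absorb the negative term into the first positive term using $0 < x_i < 2T$, which gives $x_i^{1-\alpha} = x_i\, x_i^{-\alpha} \le 2T\, x_i^{-\alpha}$ and likewise $(2T-x_i)^{1-\alpha} \le 2T(2T-x_i)^{-\alpha}$. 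Consequently
\[
\sum_{j=1}^{2N-1} b_{ij} \ge (\lambda C_A - 2T\mu C_{\tilde B})(x_i^{-\alpha} + (2T-x_i)^{-\alpha}) + \mu C_B(T - \delta(x_i) + h_N)^{1-\alpha}.
\]

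It then remains to check that the prescribed $\lambda, \mu$ are calibrated so that both surviving coefficients are exactly $C_A$. Substituting the explicit constants from \Cref{lmm:AisM,lmm:AGhasSingularity}, one verifies $\mu C_B = (\alpha-1)2^{-r\alpha-1}\cdot \frac{2\kappa_\alpha}{\Gamma(2-\alpha)} = \frac{\kappa_\alpha(\alpha-1)}{\Gamma(2-\alpha)}2^{-r\alpha} = C_A$, and that $(\lambda-1)C_A = 2^{r(\alpha-1)}T\, C_A$ equals $2T\mu C_{\tilde B}$, so that $\lambda C_A - 2T\mu C_{\tilde B} = C_A$. This yields exactly
\[
\sum_{j=1}^{2N-1} b_{ij} \ge C_A\big((x_i^{-\alpha} + (2T-x_i)^{-\alpha}) + (T - \delta(x_i) + h_N)^{1-\alpha}\big),
\]
the asserted lower bound. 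Finally, strict diagonal dominance is a corollary rather than a separate argument: for a matrix with positive diagonal and negative off-diagonal entries, the condition $b_{ii} > \sum_{j\ne i}|b_{ij}| = -\sum_{j\ne i} b_{ij}$ is equivalent to $\sum_{j} b_{ij} > 0$, and the displayed bound is strictly positive because each term on its right-hand side is positive (note $\delta(x_i) \le T$, so $T - \delta(x_i) + h_N > 0$).

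I do not anticipate a serious obstacle: the whole lemma is essentially bookkeeping once the decomposition $B = \lambda A + \mu \tilde B$ is in place. The only point demanding care is the arithmetic verification that the given values of $\lambda$ and $\mu$ make the two coefficients collapse to precisely $C_A$, with the $\tilde B$ contribution exactly cancelling the deficit coming from the $A$ term; this is where a slip in the constants $C_A, C_B, C_{\tilde B}$ inherited from the earlier lemmas would propagate, so I would double-check those identities most carefully.
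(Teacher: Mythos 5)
Your proposal is correct and follows essentially the same route as the paper: the decomposition $B=\lambda A+\mu\tilde B$, the lower bounds from \Cref{lmm:AisM,lmm:AGhasSingularity}, the absorption $x_i^{1-\alpha}\le 2T\,x_i^{-\alpha}$ (and likewise for $2T-x_i$), and the calibration $\mu=C_A/C_B$, $\lambda=1+2TC_{\tilde B}/C_B$ so that both coefficients collapse to $C_A$. If anything, your write-up is slightly more complete than the paper's, which leaves the sign pattern and the reduction of strict diagonal dominance to positivity of the row sums implicit.
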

\begin{proof}
  From \Cref{lmm:AisM,lmm:AGhasSingularity}, we have 
  \begin{equation*}
      \begin{aligned}
          \sum_{j=1}^{2N-1} b_{ij} &= \sum_{j=1}^{2N-1} \left(\lambda a_{ij} + \mu \tilde{b}_{ij} \right) \\
           & \ge \lambda C_A \left( x_i^{-\alpha} + (2T-x_i)^{-\alpha} \right)
            - \mu C_{\tilde{B}} 2T \left( x_i^{-\alpha} + (2T-x_i)^{-\alpha} \right) \\
           & \;\; + \mu C_B \left( T-\delta(x_i) + h_N \right)^{1-\alpha},
      \end{aligned}
  \end{equation*}
  with \(\lambda = 1+2T C_{\tilde{B}} / C_B = 1+ 2^{r(\alpha-1)} T\) and \(\mu= C_A/C_B = (\alpha-1)2^{-r\alpha-1} \).
  The proof is completed.
\end{proof}

\subsection{Proof of \Cref{thm:convergence}}
\label{subsec:proof-convergence}
Let \(\epsilon_i = u(x_i) - u_i\) with $\epsilon_0 = \epsilon_{2N} = 0$.
Subtracting \eqref{eq:AU} from \eqref{eq:AUhat}, we get 
\begin{equation} \label{eq:Ae=t}
    A \epsilon = \tau,
\end{equation}
where $\epsilon = [\epsilon_1, \epsilon_2, ... , \epsilon_{2N-1}]^T$ and $\tau=[\tau_1, \tau_2, ... , \tau_{2N-1}]^T$ with $\tau_i$ in \eqref{def:truncation_error}.

Let $\lambda I + \mu G$ be the right-preconditioner and $B=A(\lambda I + \mu G)$ defined in \Cref{thm:ALGisM}.
Then we can rewrite \eqref{eq:Ae=t} as 
\begin{equation*}
    B (\lambda I+ \mu G)^{-1} \epsilon = \tau,
    \quad \text{i.e.} \quad
  \sum_{j=1}^{2N-1} b_{ij} \frac{\epsilon_j}{\lambda + \mu \delta(x_j)} = \tau_i .
\end{equation*}
Assume that 
 $ \left|\frac{\epsilon_{i_0}}{\lambda + \mu \delta(x_{i_0})} \right| 
  = \max\limits_{1\le j\le 2N-1} \left|\frac{\epsilon_j}{\lambda + \mu \delta(x_j)} \right|$. 
From \Cref{thm:ALGisM} with $b_{ii} > 0$ and $b_{ij} < 0, i\neq j$, it yields
\begin{equation*}
  \begin{aligned}
    |\tau_{i_0}| &= \left|\sum_{j=1}^{2N-1} b_{i_0, j} \frac{\epsilon_j}{\lambda + \mu \delta(x_j)}\right|
    \ge b_{i_0, i_0} \left| \frac{\epsilon_{i_0}}{\lambda + \mu \delta(x_{i_0})} \right| - \sum_{j\ne i_0} |b_{i_0, j}| \left| \frac{\epsilon_j}{\lambda + \mu \delta(x_j)} \right| \\
    &\ge b_{i_0, i_0} \left| \frac{\epsilon_{i_0}}{\lambda + \mu \delta(x_{i_0})} \right| - \sum_{j\ne i_0} |b_{i_0, j}| \left|\frac{\epsilon_{i_0}}{\lambda + \mu \delta(x_{i_0})} \right|
    = \sum_{j=1}^{2N-1} b_{i_0, j} \left|\frac{\epsilon_{i_0}}{\lambda + \mu \delta(x_{i_0})}\right|  .
  \end{aligned}
\end{equation*}

According to the above inequality, \Cref{thm:truncation-error} and \Cref{thm:ALGisM}, we have
\begin{equation*}
  \left|\frac{\epsilon_{i}}{\lambda + \mu \delta(x_{i})}\right| \le \left|\frac{\epsilon_{i_0}}{\lambda + \mu \delta(x_{i_0})}\right| 
  \le \frac{|\tau_{i_0}|}{\sum_{j=1}^{2N-1} b_{i_0, j}}
  \le C h^{\min\{\frac{r\alpha}{2}, 2\}} + C (r-1) h^2 .
\end{equation*}
Since \(\lambda + \mu\delta(x_{i}) \le \lambda + \mu T \), we can derive
\begin{equation*}
  |\epsilon_i| \le C(\lambda + \mu T) h^{\min\{\frac{r\alpha}{2}, 2\}}
  \le C \left(1+ (2^{r(\alpha-1)} + (\alpha-1) 2^{-r\alpha-1}) T \right) h^{\min\{\frac{r\alpha}{2}, 2\}}.
\end{equation*}
The proof is completed.

\begin{remark}
    Let $B=A (\lambda I + \mu G)$ with $\mu=0$ in the proof of \Cref{thm:convergence}, which means that there is no preconditioning.
    Thus, from \Cref{thm:truncation-error} and \Cref{lmm:AisM},
    we can only prove
    $$ |\epsilon_i| \le C h^{\min\{\frac{r\alpha}{2}, 2\}} + C(r-1) h^{3-\alpha} \le C h^{\min\{\frac{r\alpha}{2}, 3-\alpha\}}, \quad 1<\alpha <2, $$
    which may suffer from a severe order reduction.
\end{remark}

From \Cref{lmm:regularity-u}, the source term  $|f(x)| \le C \delta(x)^{-\alpha/2}$ may potentially be singular.
If the bound of \Cref{lmm:regularity-u} is replaced by the more general weaker regularity bound, we have the following result.
\begin{theorem}[Global Error with singualr source term]\label{thm:Singualrconvergence}
Let $\alpha \in (1,2)$ and
 \begin{equation*}
  \begin{split}
    &|u^{(l)}(x)| \le C 
    \delta(x)^{\sigma-l}, l=0, 1, 2, 3, 4,    \\
    &|f^{(l)}(x)| \le C
    \delta(x)^{\sigma-\alpha-l}, l=0, 1, 2
  \end{split}
  \end{equation*}
  with  \(\sigma \in (0, \frac{\alpha}{2}]\).
Let \(u_i\) be the approximate solution of \(u(x_i )\) computed by the discretization scheme \eqref{def:discrete_equation}. Then,
\begin{equation*} 
  \max_{1\le i \le 2N-1} |u_i - u(x_i)| \le C h^{\min\{r\sigma, 2\}}
\end{equation*}
for some positive constant \(C=C(\Omega, \alpha, \sigma, r, f)\).  
\end{theorem}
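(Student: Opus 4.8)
The plan is to re-run the two-step program behind \Cref{thm:truncation-error} and \Cref{thm:convergence}, making the single structural change that the regularity exponent $\alpha/2$ is replaced throughout by the weaker exponent $\sigma$. The key observation that makes this cheap is that the stiffness-matrix analysis in \Cref{lmm:AisM,lmm:AGhasSingularity,thm:ALGisM} depends only on the graded mesh \eqref{def:xj} and the quadrature coefficients $a_{ij}$, and not at all on the regularity of $u$ or $f$. Consequently the strict diagonal dominance of $B=A(\lambda I+\mu G)$ and the row-sum lower bound $\sum_{j} b_{ij}\ge C_A\big((x_i^{-\alpha}+(2T-x_i)^{-\alpha})+(T-\delta(x_i)+h_N)^{1-\alpha}\big)$ remain valid verbatim, with the same $\lambda,\mu$.

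First I would establish the generalized local truncation error
\begin{equation*}
  |\tau_i| \le C h^{\min\{r\sigma, 2\}} \delta(x_i)^{-\alpha} + C(r-1) h^2 (T - \delta(x_i) + h_N)^{1-\alpha}, \quad 1\le i \le 2N-1.
\end{equation*}
For this I would revisit the proofs of \Cref{lmm:trunerror2,thm:Ri-ilessN/2,thm:Ri-N/2le-i-leN} together with their supporting estimates \Cref{lmm:d2Pj-itle,lmm:dQj-itle}. In all of these the regularity hypotheses enter only through the pointwise bounds $|u^{(l)}(x)|\le C\delta(x)^{\alpha/2-l}$ and $|f^{(l)}(x)|\le C\delta(x)^{-\alpha/2-l}$ supplied by \Cref{lmm:regularity-u}; replacing $\alpha/2$ by $\sigma$ in these bounds while leaving every mesh-induced exponent ($1/r$, $2/r$) untouched carries the entire bookkeeping through. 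This upgrades the intermediate estimates of \Cref{thm:Ri-ilessN/2} to $h^2 x_i^{-\sigma-2/r}$, $h^2(x_i^{-1-\alpha}\ln i+\ln N)$ and $h^{r\sigma+r}x_i^{-1-\alpha}$ in the three regimes determined by the sign of $\sigma-2/r+1$, and that of \Cref{lmm:trunerror2} to $Ch^2\delta(x_i)^{\sigma-\alpha-2/r}$.

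The one genuinely new point is to verify that these intermediate bounds still collapse to the single form $h^{\min\{r\sigma,2\}}\delta(x_i)^{-\alpha}$. Writing $x_i=T(ih)^r$ for $i\le N$ and splitting on the sign of the resulting exponent, the bound $h^2\delta(x_i)^{\sigma-\alpha-2/r}\le Ch^{\min\{r\sigma,2\}}\delta(x_i)^{-\alpha}$ reduces to $h^2(ih)^{r\sigma-2}\le Ch^{\min\{r\sigma,2\}}$ and holds for every $\sigma>0$, while $h^{r\sigma+r}x_i^{-1-\alpha}\le Ch^{r\sigma}x_i^{-\alpha}\le Ch^{\min\{r\sigma,2\}}x_i^{-\alpha}$ is immediate. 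The binding inequality is $h^2 x_i^{-\sigma-2/r}\le Ch^{\min\{r\sigma,2\}}x_i^{-\alpha}$, which after substitution reduces to requiring $r(\alpha-\sigma)\ge\min\{r\sigma,2\}$; this is exactly where the hypothesis $\sigma\le\alpha/2$ is indispensable, since it forces $\alpha-\sigma\ge\sigma$ and hence $r(\alpha-\sigma)\ge r\sigma\ge\min\{r\sigma,2\}$. I expect this exponent-matching step, rather than the routine re-derivation of the lemmas, to be the main obstacle, as it is the only place that pins down the admissible range of $\sigma$.

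Finally I would feed the generalized truncation error into the preconditioner argument of \Cref{subsec:proof-convergence} without change. Rewriting \eqref{eq:Ae=t} as $B(\lambda I+\mu G)^{-1}\epsilon=\tau$ and selecting the index $i_0$ that maximizes $|\epsilon_j/(\lambda+\mu\delta(x_j))|$, the diagonal dominance and row-sum bound of \Cref{thm:ALGisM} give $|\epsilon_i/(\lambda+\mu\delta(x_i))|\le |\tau_{i_0}|/\sum_{j}b_{i_0,j}\le Ch^{\min\{r\sigma,2\}}+C(r-1)h^2$. Since $h^2\le h^{\min\{r\sigma,2\}}$ and $\lambda+\mu\delta(x_i)\le\lambda+\mu T$, multiplying back through yields $\max_{1\le i\le 2N-1}|\epsilon_i|\le Ch^{\min\{r\sigma,2\}}$, which is the claimed estimate.
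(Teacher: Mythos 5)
Your overall strategy is exactly the paper's: its proof of \Cref{thm:Singualrconvergence} is a one-liner that reruns \Cref{thm:truncation-error,thm:convergence} with $\sigma$ in place of $\alpha/2$, and your structural observations --- that \Cref{lmm:AisM,lmm:AGhasSingularity,thm:ALGisM} are regularity-free, that the truncation analysis uses regularity only through the pointwise bounds of \Cref{lmm:regularity-u}, and that the preconditioner argument then applies verbatim --- are all correct, so the final estimate is reached.

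However, one intermediate claim is mis-stated, and it distorts your account of where $\sigma\le\alpha/2$ matters. The $u$-based terms in the truncation analysis pair the interpolation error $|u-\Pi_h u|\le Ch^2\delta(y)^{\sigma-2/r}$ (the generalized \Cref{lmm:Dyjleh2ya/2m2/r}) with the kernel decay $|D_h^2K_y(x_i)|\simeq|x_i-y|^{-1-\alpha}$; for instance, in \Cref{lmm:Ri-I5-1},
\begin{equation*}
  \sum_{j=2i+1}^{N}|V_{ij}|\le Ch^2\int_{x_{2i}}^{x_N}y^{\sigma-2/r}\,y^{-1-\alpha}\,dy
  \le Ch^2\,x_i^{\sigma-\alpha-2/r},
\end{equation*}
and the estimates of $I_1,I_2,I_3,I_4$ behave the same way. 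So the correct generalization of \Cref{thm:Ri-ilessN/2} is $Ch^2x_i^{\sigma-\alpha-2/r}$, not $Ch^2x_i^{-\sigma-2/r}$ as you wrote; the two exponents agree only at $\sigma=\alpha/2$, and for $\sigma<\alpha/2$ your version is strictly stronger (the ratio $x_i^{2\sigma-\alpha}$ blows up at $x_1=Th^r$ as $h\to0$), so it does not follow from the bookkeeping you invoke. Fortunately the slip is self-repairing: with the correct exponent, the collapse step is exactly the computation you already carried out for the \Cref{lmm:trunerror2} term, namely $h^2x_i^{\sigma-\alpha-2/r}\le Ch^{\min\{r\sigma,2\}}x_i^{-\alpha}$ reduces to $h^2(ih)^{r\sigma-2}\le Ch^{\min\{r\sigma,2\}}$, which holds for every $\sigma>0$. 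In particular, your assertion that the inequality $r(\alpha-\sigma)\ge\min\{r\sigma,2\}$ is ``exactly where $\sigma\le\alpha/2$ is indispensable'' is an artifact of the overstated bound: in the corrected argument that hypothesis is not pinned down at the exponent-matching step (only $\sigma<\alpha$ is used, so that integrals such as the one above are dominated by their lower endpoints); it is rather the natural range in which the assumed regularity bounds can hold. Replacing $-\sigma-2/r$ by $\sigma-\alpha-2/r$ throughout repairs the proof, which then coincides with the paper's intended argument.
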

\begin{proof}
Similar to the performer in \Cref{thm:truncation-error,thm:convergence},
  it is easy to check the local truncation error
    \begin{equation*} 
      \begin{aligned}
        |\tau_i| &= | -D_h^{\alpha} \Pi_hu(x_i) - f(x_i) | \\
        &\le  C  h^{\min\{r\sigma, 2\}} \delta(x_i)^{-\alpha}
        + C(r-1) h^2 (T-\delta(x_{i}) + h_N)^{1-\alpha},
      \end{aligned}
    \end{equation*}
 and the global error
     $ \max\limits_{1\le i \le 2N-1} |u_i - u(x_i)| \le C h^{\min\{r\sigma, 2\}}$.
The proof is completed. 
    \end{proof}
\section{Numerical experiments}
\label{sec:experiments}

We use the difference-quadrature scheme \eqref{eq:equation_matrix} to solve the fractional Laplacian \eqref{eq:equation}  with both regular and singular source terms .
\subsection{Regular source term}

If \(f\equiv 1\), the exact (Getoor) solution \cite{Getoor1961,HuangO:14,RosOtonSerra:14} of the problem \eqref{eq:equation} is
\begin{equation*}
    u(x) = \frac{2^{-\alpha} \Gamma(\frac{1}{2})}{\Gamma(1+\frac{\alpha}{2})\Gamma(\frac{1+\alpha}{2}) } \left[ x(1-x) \right]^{\frac{\alpha}{2}}, \quad x\in \Omega=(0,1).
\end{equation*}

In the numerical experiments of this example, we measure the numerical errors by using the maximum nodal error (i.e., the discrete \(L^\infty\) norm):
\begin{equation*}
    E^N := \max_{0\le i\le 2N} |u(x_i) - u_i|.
\end{equation*}
The rate of convergence of \(E^N\) is computed in the usual way, viz.,
\begin{equation*}
    Rate^N = \log_2 \left( \frac{E^{N/2}}{E^{N}} \right).
\end{equation*}
\Cref{example:1} shows that the difference-quadrature method \eqref{eq:equation_matrix} has convergence order
$\mathcal{O}(h^{\min\{\frac{r\alpha}{2}, 2\}})$, which agrees exactly with \Cref{thm:convergence}.

\begin{table}[h]\fontsize{9.5pt}{12pt}\selectfont
  \begin{center}
   \caption {Maximum nodal errors showing convergence rate $\mathcal{O}(h^{\frac{\alpha}{2}})$ with $r=1$ and $\mathcal{O}(h^{2})$ with $r=4/\alpha$.} \vspace{5pt}
 \begin{tabular*}{\linewidth}{@{\extracolsep{\fill}}*{10}{c}}                                    \hline
 \multicolumn{8}{c}{ $r=1$}~~~~~~~~~~~~~~~~~~~~~~~~~~~~~~~~~~~~~~~~~~~~~~~~$r=4/\alpha$  \\ \hline
 $N$  &$\alpha=1.2$ &  Rate     & $\alpha=1.8$  & Rate       & $\alpha=1.2$ &   Rate   & $\alpha=1.8$ &   Rate   \\\hline
 100 &  1.1269e-3   &           & 2.7320e-5    &            & 4.1583e-5   &         & 7.6424e-6  &  \\
 200 &  7.4281e-4  & 0.6013     & 1.4829e-5    & 0.8815     & 1.0628e-5   & 1.9682  & 2.0649e-6  & 1.8880  \\
 400 &  4.8986e-4  &  0.6006    & 7.9970e-6    & 0.8909    & 2.6919e-6   & 1.9811  & 5.5008e-7  & 1.9083  \\
 800 &  3.2311e-4  &  0.6003     & 4.2989e-6    & 0.8955     & 6.7824e-7   & 1.9888  & 1.4495e-7  & 1.9240  \\
 \hline
 \end{tabular*}\label{example:1}
 \end{center}
 \end{table}

\subsection{Singular source term}

We take the singular source term \(f(x)=x^{\sigma-\alpha}\), \(\sigma\in(0, \frac{\alpha}{2}]\) with $\sigma=0.4$ in \eqref{eq:equation}.
Since the analytic solution is unknown, the convergence rate of the numerical results is computed by
\begin{equation*}
  Rate^N = \log_2 \left( \frac{E^{N/2}}{E^{N}} \right)
  \quad \text{with} \quad
  E^N = \max_{0\le i\le N} |u^{N/2}_i - u^{N}_{2i}|.
\end{equation*}


\begin{table}[h]\fontsize{9.5pt}{12pt}\selectfont
  \begin{center}
   \caption {Maximum nodal errors showing convergence rate $\mathcal{O}(h^{\sigma})$ with $r=1$ and $\mathcal{O}(h^{2})$ with $r=2/\sigma$.} \vspace{5pt}
 \begin{tabular*}{\linewidth}{@{\extracolsep{\fill}}*{10}{c}}                                    \hline
 \multicolumn{8}{c}{ $r=1$}~~~~~~~~~~~~~~~~~~~~~~~~~~~~~~~~~~~~~~~~~~~~~~~~$r=2/\sigma$  \\ \hline
 $N$  &$\alpha=1.2$ &  Rate     & $\alpha=1.8$  & Rate       & $\alpha=1.2$ &   Rate   & $\alpha=1.8$ &   Rate   \\\hline
 100 &  2.9193e-2   &           & 5.6776e-2    &            & 2.7820e-3   &         & 5.0311e-3  &  \\
 200 &  2.2619e-2  & 0.3681     & 4.3468e-2    & 0.3853     & 6.9631e-4   & 1.9983  & 1.3190e-3  & 1.9315  \\
 400 &  1.7435e-2  &  0.3755    & 3.3112e-2    & 0.3926    & 1.7418e-4   & 1.9992  & 3.4157e-4  & 1.9492  \\
 800 &  1.3395e-2  &  0.3804     & 2.5161e-2    & 0.3962     & 4.3557e-5   & 1.9996  & 8.7691e-5  & 1.9617  \\
 \hline
 \end{tabular*}\label{example:2}
 \end{center}
 \end{table}

\Cref{example:2} shows that the difference-quadrature method \eqref{eq:equation_matrix} has convergence order
$\mathcal{O}(h^{\min \{r\sigma, 2\}})$, which agrees with \Cref{thm:Singualrconvergence}.

\section*{Acknowledgments}
This work was supported by the National Natural Science Foundation of China under Grant No. 12471381 and Science Fund for Distinguished Young Scholars of Gansu Province under Grant No. 23JRRA1020.

\appendix

\section{Approximations of difference and interpolation}
In this appendix, we provide some approximations for the second-order difference quotients $D_h^2$ and the interpolation error $u(x)-\Pi_h u(x)$.
\begin{lemma} \label{lmm:Dh2simd2}
  Let $D_h^2$ be the difference quotient operator defined by \eqref{def:Dh2}.
  If \(g(x)\in C(\bar{\Omega}) \cap C^2(\Omega)\),
  there exists \(\xi\in (x_{i-1}, x_{i+1})\), $i=1, 2, ..., 2N-1$, such that
  \begin{equation*} \label{eq:Dh2simd2}
    \begin{aligned}
      D_h^2 g(x_i) & = g''(\xi), \quad \xi \in (x_{i-1}, x_{i+1}).
    \end{aligned}
  \end{equation*}
  Moreover, if \(g(x) \in C(\bar{\Omega}) \cap C^4(\Omega)\), then we have
  \begin{equation*} \label{eq:Dh2simd4}
    \begin{aligned}
      & D_h^2 g(x_i) = g''(x_{i}) + \frac{h_{i+1}-h_{i}}{3} g'''(x_{i}) \\
      &  +  \frac{2}{h_i + h_{i+1}}\left( \frac{1}{h_{i}}\int_{x_{i-1}}^{x_i} g''''(y) \frac{(y-x_{i-1})^3}{3!} dy + \frac{1}{h_{i+1}} \int_{x_i}^{x_{i+1}} g''''(y) \frac{(x_{i+1}-y)^3}{3!} dy \right) .
    \end{aligned}
  \end{equation*}
\end{lemma}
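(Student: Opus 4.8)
The plan is to treat the two assertions separately, both resting on the observation that $D_h^2 g(x_i)$ is exactly twice the second-order divided difference $g[x_{i-1},x_i,x_{i+1}]$. Indeed, expanding $g[x_{i-1},x_i,x_{i+1}] = \frac{1}{h_i+h_{i+1}}\bigl(\frac{g(x_{i+1})-g(x_i)}{h_{i+1}}-\frac{g(x_i)-g(x_{i-1})}{h_i}\bigr)$ and collecting nodal values reproduces precisely the coefficients $\frac{1}{h_i}$, $-(\frac{1}{h_i}+\frac{1}{h_{i+1}})$, $\frac{1}{h_{i+1}}$ appearing in \eqref{def:Dh2}, up to the prefactor $\frac{2}{h_i+h_{i+1}}$. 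This identity is the common backbone of both parts.

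For the first assertion (assuming only $g\in C(\bar\Omega)\cap C^2(\Omega)$), I would argue via Rolle's theorem. Let $p$ denote the quadratic interpolant of $g$ at the three nodes $x_{i-1},x_i,x_{i+1}$. Then $g-p$ has three zeros in $[x_{i-1},x_{i+1}]$, so applying Rolle's theorem twice produces a point $\xi\in(x_{i-1},x_{i+1})$ with $(g-p)''(\xi)=0$. Since $p$ is quadratic, its second derivative is the constant $p''\equiv 2\,g[x_{i-1},x_i,x_{i+1}]=D_h^2 g(x_i)$, whence $g''(\xi)=D_h^2 g(x_i)$, as claimed. (Equivalently, one may simply invoke the mean-value theorem for divided differences.)

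For the second assertion (now with $g\in C(\bar\Omega)\cap C^4(\Omega)$), I would expand $g(x_{i-1})$ and $g(x_{i+1})$ about $x_i$ to third order using Taylor's formula with integral remainder, substituting $x_{i-1}-x_i=-h_i$ and $x_{i+1}-x_i=h_{i+1}$. Inserting these into \eqref{def:Dh2} and grouping term by term, the $g(x_i)$ contributions cancel because their coefficients sum to zero, the first-derivative contributions cancel ($-g'(x_i)$ against $+g'(x_i)$), the second-derivative contributions combine to $\frac{h_i+h_{i+1}}{2}g''(x_i)$, and the third-derivative contributions combine to $\frac{h_{i+1}^2-h_i^2}{6}g'''(x_i)$. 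Multiplying through by $\frac{2}{h_i+h_{i+1}}$ then yields the leading term $g''(x_i)$ together with the correction $\frac{h_{i+1}-h_i}{3}g'''(x_i)$, while the two integral remainders survive in exactly the stated form.

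The computation is routine once the divided-difference identity is established; the only point demanding care is the sign bookkeeping in the left remainder, where the reversal of the integration limits and the factor $(x_{i-1}-y)^3=-(y-x_{i-1})^3$ each contribute a sign change that must cancel, so that $\int_{x_i}^{x_{i-1}} g''''(y)\frac{(x_{i-1}-y)^3}{3!}\,dy$ is correctly rewritten as $\int_{x_{i-1}}^{x_i} g''''(y)\frac{(y-x_{i-1})^3}{3!}\,dy$. I anticipate no genuine obstacle here, only the need to verify these cancellations precisely.
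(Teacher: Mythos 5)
Your proposal is correct, and its second half coincides with the paper's own argument: the paper likewise obtains the exact fourth-order identity by expanding $g(x_{i\pm 1})$ about $x_i$ with integral remainders and letting the zeroth- and first-order terms cancel (its proof simply says this identity ``can be derived in a similar manner''), and your sign bookkeeping for the left remainder is exactly right. For the first identity, however, you take a genuinely different route. The paper writes two second-order Taylor expansions with Lagrange remainders $g''(\xi_1)$, $\xi_1\in(x_{i-1},x_i)$, and $g''(\xi_2)$, $\xi_2\in(x_i,x_{i+1})$, notes that $D_h^2 g(x_i)$ equals the convex combination $\frac{h_i}{h_i+h_{i+1}}g''(\xi_1)+\frac{h_{i+1}}{h_i+h_{i+1}}g''(\xi_2)$, and concludes by the intermediate value theorem applied to $g''$ on $[\xi_1,\xi_2]$. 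You instead identify $D_h^2 g(x_i)=2\,g[x_{i-1},x_i,x_{i+1}]$ and apply Rolle's theorem twice to $g-p$, i.e.\ the mean value theorem for divided differences. The two arguments buy slightly different things: yours needs only continuity of $g$ at the cell endpoints plus $C^2$ in the open interval, so it is the cleaner argument in the boundary cells $i=1$ and $i=2N-1$, where $x_{i\mp 1}\in\partial\Omega$ and the hypothesis $g\in C(\bar{\Omega})\cap C^2(\Omega)$ provides no differentiability at that endpoint (the paper's Lagrange-remainder expansion there formally presupposes $g'$ continuous up to the boundary node); the paper's version, in exchange, stays entirely inside Taylor machinery, so the very same two expansions upgrade directly to the fourth-order formula. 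One shared loose end, which is not a defect of your write-up relative to the paper's: in the boundary cells the integrals of $g''''$ are improper under the stated hypothesis and should be interpreted as limits, a point neither you nor the paper makes explicit.
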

\begin{proof}
  By Taylor series expansion, we obtain
  \begin{gather*}
    g(x_{i-1}) = g(x_{i}) - (x_{i}-x_{i-1}) g'(x_{i}) + \frac{(x_{i}-x_{i-1})^2}{2} g''(\xi_1), \quad \xi_1 \in (x_{i-1}, x_{i}),        \\
    g(x_{i+1}) = g(x_{i}) + (x_{i+1}-x_{i}) g'(x_{i}) + \frac{(x_{i+1}-x_{i})^2}{2} g''(\xi_2), \quad \xi_2 \in (x_{i}, x_{i+1}).
  \end{gather*}
  From \eqref{def:Dh2} and intermediate value theorem, it yields
  \begin{equation*}
    D_h^2 g(x_i) = \frac{h_i}{h_i + h_{i+1}} g''(\xi_1) + \frac{h_{i+1}}{h_i + h_{i+1}} g''(\xi_2) = g''(\xi) , \quad \xi \in [\xi_{1}, \xi_{2}].
  \end{equation*}
  The second equation can be derived in a similar manner. 
  The proof is completed.
\end{proof}

\begin{lemma} \label{lmm:Dyj}
  Let \(y_j^\theta = (1-\theta) x_{j-1} + \theta x_j, \theta\in (0,1)\) with $2\le j\le 2N-1$. Then,
  \begin{equation*}
    \begin{aligned}
      u(y_j^\theta) - \Pi_hu(y_j^\theta) & = -\frac{\theta (1-\theta)}{2} h_j^2 u''(\xi), \quad \xi \in (x_{j-1}, x_j),
    \end{aligned}
  \end{equation*}
  \begin{equation*}
    \begin{aligned}
      u(y_j^\theta) - \Pi_hu(y_j^\theta) = & -\frac{\theta (1-\theta)}{2} h_j^2 u''(y_j^\theta)
      + \frac{\theta (1-\theta)}{3!} h_j^3 \left(\theta^2 u'''(\eta_1) - (1-\theta)^2 u'''(\eta_2) \right)
    \end{aligned}
  \end{equation*}
  with \(\eta_1 \in (x_{j-1}, y_j^\theta), \eta_2 \in (y_j^\theta, x_j)\).
\end{lemma}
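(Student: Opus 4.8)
The plan is to reduce both identities to elementary remainder arguments on the single subinterval $[x_{j-1}, x_j]$, on which $\Pi_h u$ is affine. First I would record the nodal values explicitly: since $y_j^\theta - x_{j-1} = \theta h_j$, linearity of the interpolant gives the convex combination
\[
\Pi_h u(y_j^\theta) = (1-\theta)\,u(x_{j-1}) + \theta\, u(x_j),
\]
so the quantity to be analysed is $u(y_j^\theta) - (1-\theta)u(x_{j-1}) - \theta u(x_j)$.

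For the first identity I would invoke the classical Lagrange error formula for linear interpolation, $u(y) - \Pi_h u(y) = \frac{1}{2} u''(\xi_y)\,\omega(y)$, where $\omega(y) = (y - x_{j-1})(y - x_j)$ is the nodal polynomial and $\xi_y \in (x_{j-1}, x_j)$. Evaluating the two factors at $y = y_j^\theta$ gives $y_j^\theta - x_{j-1} = \theta h_j$ and $y_j^\theta - x_j = -(1-\theta)h_j$, hence $\omega(y_j^\theta) = -\theta(1-\theta)h_j^2$, which produces the stated formula with $\xi := \xi_{y_j^\theta}$.

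For the second, sharper identity I would Taylor expand $u(x_{j-1})$ and $u(x_j)$ about the interior point $y_j^\theta$ itself, using the signed increments $x_{j-1} - y_j^\theta = -\theta h_j$ and $x_j - y_j^\theta = (1-\theta)h_j$ and keeping cubic Lagrange remainders at $\eta_1 \in (x_{j-1}, y_j^\theta)$ and $\eta_2 \in (y_j^\theta, x_j)$. Forming $(1-\theta)u(x_{j-1}) + \theta u(x_j)$, the constant terms reassemble $u(y_j^\theta)$; the decisive point is that the first-order terms cancel, their coefficients being $-\theta(1-\theta)h_j$ and $+\theta(1-\theta)h_j$. The second-order terms collapse to $\frac{1}{2}\theta(1-\theta)h_j^2\,u''(y_j^\theta)$ after using $\theta + (1-\theta) = 1$, and the two cubic remainders combine into $\frac{\theta(1-\theta)}{3!}h_j^3\big(\theta^2 u'''(\eta_1) - (1-\theta)^2 u'''(\eta_2)\big)$, with the signs inherited from $(x_{j-1}-y_j^\theta)^3 < 0$ and $(x_j - y_j^\theta)^3 > 0$; transposing to the left-hand side yields the claim.

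I do not expect a serious obstacle, as this merely refines the textbook interpolation bound; the step that needs care is the sign bookkeeping in the second identity. It is precisely the first-order cancellation that pins $u''$ at the evaluation point $y_j^\theta$ (rather than at an unlocated $\xi$) and exposes the $u'''$ correction, and this explicit, evaluation-point form is exactly what the later grid-mapping estimates feeding \Cref{lmm:Tij-normalized} require.
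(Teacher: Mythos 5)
Your proposal is correct and takes essentially the same approach as the paper: the second (sharper) identity is obtained exactly as in the paper's proof, by Taylor expanding $u(x_{j-1})$ and $u(x_j)$ about $y_j^\theta$ with cubic Lagrange remainders, using the cancellation of the first-order terms, and transposing; your sign bookkeeping checks out. The only immaterial difference is in the first identity, where you invoke the classical Lagrange interpolation error formula $u(y)-\Pi_h u(y)=\tfrac12 u''(\xi_y)(y-x_{j-1})(y-x_j)$ directly, while the paper Taylor expands with second-order remainders at $\xi_1,\xi_2$ and then merges $\theta u''(\xi_1)+(1-\theta)u''(\xi_2)$ into a single $u''(\xi)$ via the intermediate value theorem--two standard routes to the same conclusion.
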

\begin{proof}
  Using Taylor series expansion, we get
  \begin{gather*}
    u(x_{j-1}) = u(y_j^\theta) - \theta h_{j} u'(y_j^\theta) + \frac{\theta^2 h_{j}^2}{2!} u''(\xi_1), \quad \xi_1 \in (x_{j-1}, y_j^\theta) ,\\
    u(x_{j}) = u(y_j^\theta) + (1-\theta) h_{j} u'(y_j^\theta) + \frac{(1-\theta)^2 h_{j}^2}{2!} u''(\xi_2) , \quad \xi_2 \in (y_j^\theta, x_j) ,
  \end{gather*}
  which implies that
  \begin{equation*}
    \begin{aligned}
      u(y_j^\theta) - \Pi_hu(y_j^\theta) 
      & = u(y_j^\theta) - (1-\theta) u(x_{j-1}) - \theta u(x_{j})      \\
      & = -\frac{\theta (1-\theta)}{2} h_j^2 ( \theta u''(\xi_1) + (1-\theta) u''(\xi_2) ) \\
      & = -\frac{\theta (1-\theta)}{2} h_j^2 u''(\xi), \quad \xi \in [\xi_1, \xi_2].
    \end{aligned}
  \end{equation*}
  The second equation can be derived in a similar manner. 
  The proof is completed.
\end{proof}

\begin{lemma} \label{lmm:Dyjleh2ya/2m2/r}
  For any $y\in (x_{j-1}, x_{j})$, \(2\le j \le 2N-1\), there exists 
  \begin{equation*}
    |u(y)-\Pi_hu(y)| \le h_j^2 \max_{\xi\in [x_{j-1}, x_j]}|u''(\xi)| \le C h^2 \delta(y)^{\alpha/2-2/r}.
  \end{equation*}
\end{lemma}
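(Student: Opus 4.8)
The plan is to derive both inequalities directly from the pointwise interpolation identity in \Cref{lmm:Dyj}, combined with the nodal regularity estimate of \Cref{lmm:regularity-u} and the mesh comparabilities of \Cref{lmm:hilexi}. No new machinery is needed; the whole argument is a conversion of the two mesh-dependent factors $h_j^2$ and $\max|u''|$ into powers of $\delta(y)$, followed by adding exponents.

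For the first inequality, I would fix $y\in(x_{j-1},x_j)$ and write $y=y_j^\theta$ for the unique $\theta\in(0,1)$. Then \Cref{lmm:Dyj} gives $u(y)-\Pi_h u(y) = -\frac{\theta(1-\theta)}{2}h_j^2 u''(\xi)$ for some $\xi\in(x_{j-1},x_j)$. Taking absolute values and using $\theta(1-\theta)\le\frac14$, so that $\frac{\theta(1-\theta)}{2}\le\frac18\le 1$, yields at once $|u(y)-\Pi_h u(y)|\le h_j^2\max_{\xi\in[x_{j-1},x_j]}|u''(\xi)|$.

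For the second inequality, I would first apply \Cref{lmm:regularity-u} with $l=2$ to get $|u''(\xi)|\le C\delta(\xi)^{\alpha/2-2}$. Next I would invoke \Cref{lmm:hilexi} at the shifted index $j-1$ (admissible since $j\ge 2$), which gives $\delta(x_{j-1})\simeq\delta(x_j)\simeq\delta(y_j^\theta)$; as every $\xi\in[x_{j-1},x_j]$ has the form $y_j^\theta$, all the $\delta$ values on the cell are comparable to $\delta(y)$. Because $\alpha/2-2<0$, this comparability lets me replace $\max_\xi\delta(\xi)^{\alpha/2-2}$ by $C\delta(y)^{\alpha/2-2}$. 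The same lemma supplies $h_j\simeq h\,\delta(x_j)^{1-1/r}\simeq h\,\delta(y)^{1-1/r}$, hence $h_j^2\le C h^2\delta(y)^{2-2/r}$. Multiplying the two bounds and collecting exponents via $2-2/r+\alpha/2-2=\alpha/2-2/r$ produces the claimed $C h^2\delta(y)^{\alpha/2-2/r}$.

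I do not anticipate any genuine obstacle here, since the statement is deliberately a lemma assembled from already-proved ingredients. The only point requiring mild care is verifying that $\delta$ is comparable across the entire cell $[x_{j-1},x_j]$, and not merely at its nodes; this is exactly what the node--interior comparabilities in \Cref{lmm:hilexi} furnish once that lemma is read at index $j-1$, which is why the hypothesis is restricted to $2\le j\le 2N-1$.
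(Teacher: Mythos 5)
Your proof is correct and follows essentially the same route as the paper, whose own proof is a one-line citation of \Cref{lmm:Dyj,lmm:regularity-u,lmm:hilexi} combined exactly as you do: interpolation identity, then $|u''(\xi)|\le C\delta(\xi)^{\alpha/2-2}$, then $h_j\simeq h\,\delta(x_j)^{1-1/r}$ with $\delta$ comparable across the cell. Your remark that the cell-wide comparability comes from reading \Cref{lmm:hilexi} at index $j-1$, which is precisely what forces the restriction $2\le j\le 2N-1$, is the one detail the paper leaves implicit.
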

\begin{proof}
  According to \Cref{lmm:Dyj,lmm:regularity-u,lmm:hilexi}, the desired result is obtained.
\end{proof}

\begin{lemma} \label{lmm:Dyj1}
  For any \(x\in [x_{j-1}, x_j]\), $1\le j \le 2N$, there exists
  \begin{equation*}
    \begin{aligned}
      |u(x) - \Pi_hu(x)| & =\! \left| \frac{x_{j}-x}{h_j} \int_{x_{j-1}}^x u'(y) dy - \frac{x-x_{j-1}}{h_j} \int_{x}^{x_{j}} u'(y) dy \right| 
                       \le \!\int_{x_{j-1}}^{x_{j}} |u'(y)| dy.
    \end{aligned}
  \end{equation*}
  In particular, we have
    $|u(x) - \Pi_hu(x)| \le 
    C\frac{2}{\alpha} h_1^{\alpha/2}$ for $x\in (0, x_1) \cup (x_{2N-1}, 2T)$.
\end{lemma}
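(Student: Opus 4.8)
The plan is to prove the displayed identity directly from the definition of the piecewise linear interpolant and then read off the two bounds by elementary estimates. First I would fix $x\in[x_{j-1},x_j]$ and write out $\Pi_h u$ on this single cell. Because the basis consists of the standard hat functions and the nodal values at the endpoints of $\Omega$ vanish, $u(x_0)=u(x_{2N})=0$, on every cell the interpolant reduces to the linear function through the endpoint data,
\[
  \Pi_h u(x) = \frac{x_j-x}{h_j}\,u(x_{j-1}) + \frac{x-x_{j-1}}{h_j}\,u(x_j).
\]
Using that the two barycentric weights sum to one, I rewrite $u(x)=\frac{x_j-x}{h_j}u(x)+\frac{x-x_{j-1}}{h_j}u(x)$ and subtract, which gives
\[
  u(x)-\Pi_h u(x) = \frac{x_j-x}{h_j}\bigl(u(x)-u(x_{j-1})\bigr) + \frac{x-x_{j-1}}{h_j}\bigl(u(x)-u(x_j)\bigr).
\]
Applying the fundamental theorem of calculus to each difference, $u(x)-u(x_{j-1})=\int_{x_{j-1}}^{x}u'(y)\,dy$ and $u(x)-u(x_j)=-\int_{x}^{x_j}u'(y)\,dy$, yields exactly the claimed identity.

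For the first inequality I would bound each weight by $1$, since both $\frac{x_j-x}{h_j}$ and $\frac{x-x_{j-1}}{h_j}$ lie in $[0,1]$, apply the triangle inequality, and observe that $[x_{j-1},x]$ and $[x,x_j]$ are disjoint and together cover $[x_{j-1},x_j]$; the two integrals then combine into $\int_{x_{j-1}}^{x_j}|u'(y)|\,dy$. For the boundary estimate I would specialize to $j=1$, the case $x\in(x_{2N-1},2T)$ being symmetric about $x=T$. Since $x_0=0$, the first bound gives $|u(x)-\Pi_h u(x)|\le\int_0^{x_1}|u'(y)|\,dy$, and I invoke the regularity bound $|u'(y)|\le C\,\delta(y)^{\alpha/2-1}$ from \Cref{lmm:regularity-u}, which near the left endpoint reads $|u'(y)|\le C y^{\alpha/2-1}$. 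As $\alpha\in(1,2)$ forces $\alpha/2-1>-1$, this singularity is integrable and $\int_0^{x_1}y^{\alpha/2-1}\,dy=\frac{2}{\alpha}x_1^{\alpha/2}$; finally $x_1=h_1$, and by the mesh symmetry $h_{2N}=h_1$, which produces the stated bound $C\frac{2}{\alpha}h_1^{\alpha/2}$.

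The only point requiring genuine care — and the closest thing to an obstacle in an otherwise routine argument — is justifying the cell-wise linear-interpolation formula on the two extreme cells $[x_0,x_1]$ and $[x_{2N-1},x_{2N}]$, where one of the two relevant hat functions is absent from the basis. Here I must use the homogeneous boundary condition $u(x_0)=u(x_{2N})=0$ to see that the missing nodal term contributes zero, so the formula for $\Pi_h u$ and hence the identity remain valid verbatim with $u(x_{j-1})=0$ (respectively $u(x_j)=0$). Everything else is a direct application of the fundamental theorem of calculus and the singular-derivative estimate already available from \Cref{lmm:regularity-u}.
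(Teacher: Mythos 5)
Your proposal is correct and follows essentially the same route as the paper: expand $\Pi_h u$ cell-wise from the definition \eqref{def:interp}, represent $u$ via the fundamental theorem of calculus to obtain the displayed identity, bound the barycentric weights by one for the first inequality, and invoke the derivative bound $|u'(y)|\le C\,\delta(y)^{\alpha/2-1}$ from \Cref{lmm:regularity-u} together with $h_1=x_1=T h^{r}=h_{2N}$ for the boundary estimate. Your explicit treatment of the extreme cells via $u(x_0)=u(x_{2N})=0$ is a detail the paper leaves implicit, but it is the same argument.
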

\begin{proof}
    From the definition of $\Pi_h u(x)$ in \eqref{def:interp} and using $u(x) = u(x_i) + \int_{x_i}^{x} u'(y) dy$, the proof is completed.
\end{proof}

\section{Bound estimates}
Set
\(
  h_i = x_{i} - x_{i-1}
\) for $j = 1, 2, ... ,  2N$ 
and define $h := \frac{1}{N}$. 
The following bounds are needed in several places.
\begin{lemma} \label{lmm:hi1-hi}
For \(i=1,2,\cdots,2N-1\), there exists a constant \(C\) such that 
\begin{equation*}
  |h_{i+1} - h_{i}| \le C(r-1) h^2 \delta(x_i)^{1-2/r} , \quad r\ge 1.
\end{equation*}
\end{lemma}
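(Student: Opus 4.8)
The plan is to read the left-half nodes as samples of the smooth generating function $\psi(t) = Tt^r$ on $[0,1]$: by \eqref{def:xj} we have $x_j = \psi(j/N) = \psi(jh)$ for $0 \le j \le N$, so that $h_{i+1} - h_i = \psi((i+1)h) - 2\psi(ih) + \psi((i-1)h)$ is exactly a centered second difference of $\psi$ at $ih$ with step $h$. This is the natural grid-mapping viewpoint, and it makes the $(r-1)$ factor appear automatically through $\psi''$. By symmetry of the mesh about $x=T$ (one checks directly that $h_{2N+1-j}=h_j$, and from \eqref{def:delta} that $\delta(x_{2N-i})=\delta(x_i)$), it suffices to treat $1 \le i \le N$; the range $N+1\le i\le 2N-1$ then follows from the substitution $i\mapsto 2N-i$, since $|h_{i+1}-h_i| = |h_{k+1}-h_k|$ with $k=2N-i$.

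First I would record the integral form of the second difference: for $\psi \in C^2$ and $x=ih$,
\[
  \psi(x+h) - 2\psi(x) + \psi(x-h) = \int_{-h}^{h}\bigl(h-|s|\bigr)\,\psi''(x+s)\,ds .
\]
Because the weight $h-|s|$ is nonnegative and integrates to $h^2$, the mean value theorem for integrals gives $h_{i+1}-h_i = h^2\psi''(\xi)$ for some $\xi\in\bigl((i-1)h,(i+1)h\bigr)$, valid for $1\le i\le N-1$ (all three nodes lie on the left branch). With $\psi''(t)=Tr(r-1)t^{r-2}$ this yields $h_{i+1}-h_i = Tr(r-1)h^2\,\xi^{r-2}$, so the factor $(r-1)$ is already extracted.

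Next I would compare $\xi^{r-2}$ with $\delta(x_i)^{1-2/r}$. For $i\le N$ we have $\delta(x_i)=x_i=T(ih)^r$, hence $\delta(x_i)^{1-2/r}=T^{1-2/r}(ih)^{r-2}$, so it is enough to show $\xi^{r-2}\simeq (ih)^{r-2}$ with constants depending only on $r$ and $T$. For $2\le i\le N-1$ this is immediate, since $\xi\in((i-1)h,(i+1)h)$ with $(i-1)h\ge \tfrac12 ih$ and $(i+1)h\le 2ih$, so $\xi\simeq ih$ and the powers are comparable for either sign of $r-2$. Absorbing the bounded quantities $r$, $T$ and $T^{2/r-1}$ into $C$ delivers the claim on this range. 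The case $i=N$ is trivial because a direct computation gives $h_{N+1}=h_N$, so the difference vanishes.

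The genuinely delicate point is $i=1$ (especially when $1\le r<2$): there $\xi$ can approach $0$ and $\xi^{r-2}$ blows up, so the mean value argument is too crude and I would compute directly. Here $h_1 = Th^r$ and $h_2 = Th^r(2^r-1)$, whence $h_2-h_1 = Th^r(2^r-2)$, while $\delta(x_1)^{1-2/r}=T^{1-2/r}h^{r-2}$; the desired inequality reduces to $2^r-2 \le C(r-1)T^{-2/r}$. This holds because $2^r-2 = 2(2^{r-1}-1)\le C(r-1)$ by the mean value theorem (using that $r$ is bounded), while $T^{-2/r}$ is bounded below for fixed $T$ and $r$ in a bounded range. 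Collecting the three cases and invoking the symmetry reduction completes the proof, with a constant $C$ that remains bounded as $r\to 1$, consistent with $h_{i+1}=h_i$ on the uniform mesh.
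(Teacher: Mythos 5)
Your proposal is correct and takes essentially the same approach as the paper: the paper likewise writes $h_{i+1}-h_{i}$ as the centered second difference of $t\mapsto Tt^{r}$, namely $T\bigl((\frac{i+1}{N})^{r}-2(\frac{i}{N})^{r}+(\frac{i-1}{N})^{r}\bigr)$ for $1\le i\le N-1$, zero at $i=N$, and the mirrored expression for $i>N$, and then concludes from the asymptotic $(i+1)^{r}-2i^{r}+(i-1)^{r}\simeq r(r-1)i^{r-2}$ for $i\ge 1$. Your integral-remainder/mean-value argument, the direct computation at $i=1$, and the symmetry reduction simply supply rigorous justification for that asymptotic, which the paper asserts without proof.
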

\begin{proof}
According to the definition of $h_i=x_i-x_{i-1}$ as defined in \eqref{def:xj}, we obtain
\begin{equation*}
  \begin{aligned}
    h_{i+1} - h_{i} =
    \begin{cases}
      T \left( \left(\frac{i+1}{N}\right)^r - 2\left(\frac{i}{N}\right)^r + \left(\frac{i-1}{N}\right)^r  \right) ,           & 1\le i\le N-1,    \\
      0,    & i=N,    \\
      -T \left( \left(\frac{2N-i-1}{N}\right)^r - 2\left(\frac{2N-i}{N}\right)^r + \left(\frac{2N-i+1}{N}\right)^r  \right) , & N+1\le i\le 2N-1 .    \\
    \end{cases}
  \end{aligned}
\end{equation*}
Since $(i+1)^r - 2i^r + (i-1)^r \simeq r(r-1)i^{r-2}$ for $i\ge 1$,
the desired result is obtained.
\end{proof}

\begin{lemma} \label{lmm:trucerr2d2f}
For $1\le i \le 2N-1$,
  there exists a constant \(C\) such that
    \begin{equation*}
      \begin{aligned}
        & \frac{2}{h_i + h_{i+1}} \left| \frac{1}{h_i} \int_{x_{i-1}}^{x_{i}} f''(y) \frac{(y-x_{i-1})^3}{3!} dy + \frac{1}{h_{i+1}} \int_{x_{i}}^{x_{i+1}} f''(y) \frac{(y-x_{i+1})^3}{3!} dy \right| \\
         & \quad \le C h^2 \delta(x_i)^{-\alpha/2-2/r} .
      \end{aligned}
    \end{equation*}
\end{lemma}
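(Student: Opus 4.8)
The plan is to estimate the two integrals separately and to exploit the third-order vanishing of the polynomial weights $(y-x_{i-1})^3$ and $(y-x_{i+1})^3$ at their respective endpoints. Throughout I would use the regularity bound $|f''(y)|\le C\,\delta(y)^{-\alpha/2-2}$ supplied by \Cref{lmm:regularity-u}, together with the elementary estimates $0\le (y-x_{i-1})^3\le h_i^3$ on $(x_{i-1},x_i)$ and $|y-x_{i+1}|^3\le h_{i+1}^3$ on $(x_i,x_{i+1})$.

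First I would dispose of the interior indices $2\le i\le 2N-2$. Here \Cref{lmm:hilexi} (applied with $j=i-1$ and $j=i$) yields $\delta(y)\simeq\delta(x_i)$ uniformly for $y$ in either subinterval, so the singular factor can be pulled out of the integral. Evaluating $\int_{x_{i-1}}^{x_i}(y-x_{i-1})^3\,dy=h_i^4/4!$ (and similarly on the right), each of the two terms $\tfrac{1}{h_i}\int\cdots$ and $\tfrac{1}{h_{i+1}}\int\cdots$ is bounded by $C\,h_i^{3}\delta(x_i)^{-\alpha/2-2}$. Substituting $h_i\simeq h_{i+1}\simeq h\,\delta(x_i)^{1-1/r}$ and $\tfrac{2}{h_i+h_{i+1}}\simeq h^{-1}\delta(x_i)^{1/r-1}$ from \Cref{lmm:hilexi}, the powers of $h$ collapse to $2$ and the exponents of $\delta(x_i)$ to $(1/r-1)+3(1-1/r)+(-\alpha/2-2)=-\alpha/2-2/r$, which is exactly the claimed bound.

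The remaining obstacle, and the only delicate point, is the pair of boundary indices $i=1$ and $i=2N-1$, where one subinterval touches $\partial\Omega$ and the comparability $\delta(y)\simeq\delta(x_i)$ fails because $\delta(y)\to 0$ at the endpoint. For $i=1$ the troublesome term is $\tfrac{1}{h_1}\int_0^{x_1}f''(y)\tfrac{y^3}{3!}\,dy$, whose weight is $(y-x_0)^3=y^3$. The key observation is that the cubic weight precisely cancels the worst of the singularity: since $\delta(y)=y$ near $0$, the integrand obeys $|f''(y)|\,y^3\le C\,y^{1-\alpha/2}$, which is integrable because $1-\alpha/2>0$, giving $\int_0^{x_1}y^{1-\alpha/2}\,dy\simeq x_1^{2-\alpha/2}$. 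Using $h_1=x_1\simeq T h^{r}$, $h_2\simeq x_1$, and $\delta(x_1)=x_1$, one checks that $\tfrac{2}{h_1+h_2}\,\tfrac{1}{h_1}\,x_1^{2-\alpha/2}\simeq x_1^{-\alpha/2}\simeq h^2 x_1^{-\alpha/2-2/r}$, matching the target; the companion term on $(x_1,x_2)$ is covered by the interior argument, since \Cref{lmm:hilexi} (with $j=1$) still gives $\delta(y)\simeq\delta(x_1)$ there. The index $i=2N-1$ is handled identically, the roles of the two integrals being interchanged, by the symmetry of the mesh \eqref{def:xj} about $x=T$. Collecting the interior and boundary estimates then completes the proof.
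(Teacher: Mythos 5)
Your proof is correct and follows essentially the same route as the paper's: bound $|f''(y)|\le C\delta(y)^{-\alpha/2-2}$ via \Cref{lmm:regularity-u}, use $\delta(y)\simeq\delta(x_i)$ from \Cref{lmm:hilexi} on interior subintervals, and for the boundary subinterval exploit that the cubic weight cancels the singularity so that $\int_0^{x_1}y^{1-\alpha/2}\,dy\simeq x_1^{2-\alpha/2}$, then collapse the powers of $h$ and $\delta(x_i)$ using the mesh relations. Your treatment is in fact slightly more explicit than the paper's (which leaves the $i=2N-1$ case and the companion interior term implicit by symmetry); the only blemish is the inconsequential constant $\int_{x_{i-1}}^{x_i}(y-x_{i-1})^3\,dy=h_i^4/4$, not $h_i^4/4!$.
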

\begin{proof} \label{prf:trucerr2d2f}
  By \Cref{lmm:regularity-u}, for \(1 \le i \le 2N-1\), we have
  \begin{equation*}
    \begin{aligned}
      \left|\int_{x_{i-1}}^{x_{i}} f''(y)\frac{(y-x_{i-1})^3}{3!} dy \right|  \le C \int_{x_{i-1}}^{x_{i}} \delta(y)^{-\alpha/2-2} (y-x_{i-1})^3 dy .
    \end{aligned}
  \end{equation*}
  In particular, for \(i=1\), there exists
  \begin{equation*}
    \int_{x_{i-1}}^{x_{i}} \delta(y)^{-\alpha/2-2} (y-x_{i-1})^3 dy
    = \int_{0}^{x_{1}} y^{1-\alpha/2} dy
    = \frac{1}{2-\alpha/2} x_1^{2-\alpha/2} \simeq x_1^{-\alpha/2-2} h_1^4 .
  \end{equation*}
  For \(2\le i \le 2N-1\), by \Cref{lmm:hilexi}, it yields
  \begin{equation*}
    \begin{aligned}
      \int_{x_{i-1}}^{x_{i}} \delta(y)^{-\alpha/2-2} (y-x_{i-1})^3 dy 
        \simeq \int_{x_{i-1}}^{x_{i}} \delta(x_i)^{-\alpha/2-2} (y-x_{i-1})^3 dy
        \simeq \delta(x_i)^{-\alpha/2-2} h_i^4
    \end{aligned}
  \end{equation*}
  The desired result is obtained.
\end{proof}

\begin{lemma} \label{lmm:Dh2Kyxi}
  For all \(1 \le i \le 2N-1\), \(1\le j \le 2N\), $y\in (x_{j-1}, x_{j})$, there exist
  \begin{gather*}
    |D_h K_y (x_i)| \simeq | x_i - y|^{-\alpha} \quad\text{if}\quad [x_{j-1}, x_{j}] \cap [x_i, x_{i+1}] = \varnothing , \\
    D_h^2 K_y (x_i) \simeq | x_i - y |^{-1-\alpha} \quad\text{if}\quad [x_{j-1}, x_{j}] \cap [x_{i-1}, x_{i+1}] = \varnothing.
  \end{gather*}
\end{lemma}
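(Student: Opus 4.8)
The plan is to use the separation hypotheses to place $y$ strictly outside the difference stencil, so that $K_y$ is smooth there; this lets me collapse each difference quotient to a pointwise derivative of $K_y$ and then compare $|\xi-y|$ with $|x_i-y|$. Throughout write $c=\kappa_\alpha/\Gamma(2-\alpha)>0$, so that $K_y(x)=c\,|x-y|^{1-\alpha}$. Whenever $y\in(x_{j-1},x_j)$ and $(x_{j-1},x_j)$ is disjoint from the stencil, $x-y$ keeps a fixed sign over the stencil and $K_y\in C^\infty$ there, with
\[
  K_y'(x)=c(1-\alpha)\,\mathrm{sgn}(x-y)\,|x-y|^{-\alpha},
  \qquad
  K_y''(x)=c\,\alpha(\alpha-1)\,|x-y|^{-1-\alpha}>0 ,
\]
the strict positivity of $K_y''$ (using $c>0$ and $\alpha\in(1,2)$) accounting for the absence of an absolute value in the second estimate.

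First I would reduce both difference quotients to derivatives. For the forward difference $D_hK_y(x_i)=\bigl(K_y(x_{i+1})-K_y(x_i)\bigr)/h_{i+1}$, the mean value theorem gives $D_hK_y(x_i)=K_y'(\xi)$ with $\xi\in(x_i,x_{i+1})$, so $|D_hK_y(x_i)|=c(\alpha-1)|\xi-y|^{-\alpha}$. For the second difference I would apply \Cref{lmm:Dh2simd2} (whose hypotheses hold since $K_y\in C^2$ on $[x_{i-1},x_{i+1}]$) to get $D_h^2K_y(x_i)=K_y''(\xi)$ with $\xi\in(x_{i-1},x_{i+1})$, whence $D_h^2K_y(x_i)=c\,\alpha(\alpha-1)|\xi-y|^{-1-\alpha}>0$. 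After these reductions the two claimed estimates are equivalent to the single comparability statement $|\xi-y|\simeq|x_i-y|$.

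It therefore remains to establish $|\xi-y|\simeq|x_i-y|$, which is the heart of the matter. The disjointness hypothesis forces at least one full mesh cell of separation: for the second difference either $j\le i-2$ or $j\ge i+3$, so that $y$ lies below $x_{i-2}$ or above $x_{i+2}$ and hence $|x_i-y|\ge\min\{h_i+h_{i-1},\,h_{i+1}+h_{i+2}\}$. Treating the left case (the right one being symmetric), I would write $x_i-y=h_i+(x_{i-1}-y)$ and $x_{i+1}-y=(x_i-y)+h_{i+1}$, and use \Cref{lmm:hilexi} — that adjacent spacings satisfy $h_{i-1}\simeq h_i\simeq h_{i+1}\simeq h_{i+2}$ — to dominate each neighbouring spacing by a fixed multiple of $|x_i-y|$. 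This yields both a lower bound $x_{i-1}-y\ge c|x_i-y|$ and an upper bound $x_{i+1}-y\le C|x_i-y|$; since $\xi$ lies between $x_{i-1}$ and $x_{i+1}$, its distance to $y$ is squeezed between these, giving $|\xi-y|\simeq|x_i-y|$. The first-difference case is identical with the one-cell-shorter stencil $(x_i,x_{i+1})$ and the separation $j\le i-1$ or $j\ge i+3$.

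The main obstacle is precisely this last comparability: it converts the combinatorial index separation into a genuine geometric comparison of distances on a graded, nonuniform mesh, and it succeeds only because the disjointness of the \emph{closed} intervals guarantees a separation of at least one cell, so that the relevant neighbouring spacings are controlled by $|x_i-y|$ through the quasi-uniformity of \Cref{lmm:hilexi}. If the stencil were allowed to touch the cell containing $y$, then $|x_i-y|$ could be arbitrarily small relative to $h_i$, the difference quotient would blow up faster than $|x_i-y|^{-\alpha}$, and the estimate would fail — which is exactly why the hypotheses insist on disjoint closed intervals.
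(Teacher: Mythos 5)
Your proposal is correct and takes essentially the same route as the paper: both reduce the difference quotients to $K_y'(\xi)$ and $K_y''(\xi)$ via the mean value theorem and \Cref{lmm:Dh2simd2} (possible because the disjointness keeps $x-y$ of one sign on the stencil), and then conclude from $|\xi-y|\simeq|x_i-y|$. The only difference is that you justify the comparability $|\xi-y|\simeq|x_i-y|$ in detail (one-cell separation plus the adjacent-spacing comparability of \Cref{lmm:hilexi}), a step the paper simply asserts; your justification is correct.
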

\begin{proof}
  Since \(x_{i-1}-y\), \(x_{i}-y\) and \(x_{i+1}-y\) have the same sign, using \Cref{lmm:Dh2simd2} and $K_y(x) = \frac{\kappa_\alpha}{\Gamma(2-\alpha)}|x-y|^{1-\alpha}$ in \eqref{def:Kyx}, it yields
  \begin{equation*}
    \begin{aligned}
      |D_h K_y (x_i)| &= \frac{\kappa_\alpha(\alpha-1)}{\Gamma(2-\alpha)}|\xi-y|^{-\alpha}, \quad \xi\in (x_{i}, x_{i+1})  ,\\
      D_h^2 K_y (x_i) & = \frac{\kappa_\alpha \alpha(\alpha-1)}{\Gamma(2-\alpha)}|\xi-y|^{-1-\alpha}, \quad \xi\in (x_{i-1}, x_{i+1})  .
    \end{aligned}
  \end{equation*}
  Moreover, from \(|\xi-y| \simeq |x_i-y|\), the desired result is obtained.
\end{proof}

\begin{lemma} \label{lmm:sumSij13}
  There exists a constant $C$ such that
  \begin{gather*}
    \sum_{j=1}^{3} V_{1j} \le C h^2 x_1^{-\alpha/2-2/r}  \quad \text{and} \quad
    \sum_{j=1}^{4} V_{2j} \le C h^2 x_2^{-\alpha/2-2/r}  .
  \end{gather*}
\end{lemma}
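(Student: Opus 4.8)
The plan is to collapse each sum into a single weighted integral of the interpolation error against a second difference of the kernel, and then split the integration domain into a \emph{far} part, where the second difference of the kernel is regular, and a \emph{near-diagonal} part, where the kernel is singular but the interpolation error vanishes enough to keep everything integrable. First, by linearity of the second-difference operator in \eqref{def:Vij} across the three quadrature coefficients $T_{i-1,j},T_{i,j},T_{i+1,j}$ and the definition \eqref{def:Tij}, each vertical quotient can be written as $V_{ij}=\int_{x_{j-1}}^{x_j}(u-\Pi_h u)(y)\,D_h^2 K_y(x_i)\,dy$ with $K_y$ as in \eqref{def:Kyx}. Summing gives $\sum_{j=1}^{3}V_{1j}=\int_{x_0}^{x_3}(u-\Pi_h u)(y)\,D_h^2 K_y(x_1)\,dy$ and $\sum_{j=1}^{4}V_{2j}=\int_{x_0}^{x_4}(u-\Pi_h u)(y)\,D_h^2 K_y(x_2)\,dy$. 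For $i=1$ the kernel stencil is $\{x_0,x_1,x_2\}$ with unique singular node $y=x_1$, so I split off the far interval $j=3$ (where $x_0,x_1,x_2$ all lie strictly to the left of $y$) from the near intervals $j=1,2$; for $i=2$ the stencil is $\{x_1,x_2,x_3\}$ with singular node $y=x_2$, the far intervals being $j=1$ and $j=4$ and the near ones $j=2,3$.

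On each far interval the three stencil values $x_{i-1}-y$, $x_i-y$, $x_{i+1}-y$ share a common sign, so \Cref{lmm:Dh2Kyxi} yields $D_h^2 K_y(x_i)\simeq |x_i-y|^{-1-\alpha}$. Combining this with the interpolation bound $|u-\Pi_h u|(y)\le C h^2\delta(y)^{\alpha/2-2/r}$ of \Cref{lmm:Dyjleh2ya/2m2/r}, and using that on the graded mesh \eqref{def:xj} the neighbouring nodes satisfy $x_i\simeq x_{i\pm1}$, $\delta(y)\simeq x_i$, $|x_i-y|\simeq x_i$ and $h_j\simeq h\,x_i^{1-1/r}$ (\Cref{lmm:hilexi}), a one-line integration over the single far interval produces a contribution bounded by $C h^2 x_i^{-\alpha/2-2/r}$, already of the claimed form.

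The near-diagonal intervals are the main obstacle, because there $D_h^2 K_y(x_i)$ carries the non-integrable middle term $(\tfrac1{h_i}+\tfrac1{h_{i+1}})|x_i-y|^{1-\alpha}$, which blows up at the stencil node $y=x_i$, so \Cref{lmm:Dh2Kyxi} is unavailable. Rather than differencing the kernel, I bound $|D_h^2 K_y(x_i)|$ termwise by the triangle inequality into the three pieces $\tfrac1{h_i}|x_{i-1}-y|^{1-\alpha}$, $(\tfrac1{h_i}+\tfrac1{h_{i+1}})|x_i-y|^{1-\alpha}$ and $\tfrac1{h_{i+1}}|x_{i+1}-y|^{1-\alpha}$, and integrate each against the interpolation error. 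The saving grace is that $1-\alpha>-1$, so the genuinely singular middle integral $\int_{x_{j-1}}^{x_j}|x_i-y|^{1-\alpha}\,dy$ converges and contributes a factor $\simeq h_j^{2-\alpha}$, the other two terms being no larger. On the interior near-interval I use $|u-\Pi_h u|(y)\le C h^2\delta(y)^{\alpha/2-2/r}\simeq C h^2 x_i^{\alpha/2-2/r}$, while on the degenerate first interval $(x_0,x_1)$, where $\delta(y)=y\to 0$ and \Cref{lmm:hilexi} fails, I replace this by the uniform bound $|u-\Pi_h u|(y)\le C h_1^{\alpha/2}$ of \Cref{lmm:Dyj1}.

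Finally I collect the powers: each near-interval term has the shape $h_j^{-2}\cdot(\text{interpolation bound})\cdot h_j^{2-\alpha}$, where the prefactor $h_j^{-2}$ comes from $\tfrac{2}{h_i+h_{i+1}}\cdot\tfrac1{h_{(\cdot)}}$. Substituting $h_j\simeq h\,x_i^{1-1/r}$ together with the mesh identity $x_1=T h^r$ shows that the apparent negative powers of $h$ recombine into exactly $h^2 x_i^{-\alpha/2-2/r}$ up to a $T$-dependent constant — consistent with the boundary-layer growth permitted by \Cref{thm:truncation-error}. Summing the far and near contributions gives the stated estimate for $\sum_{j=1}^3 V_{1j}$; the computation for $\sum_{j=1}^4 V_{2j}$ is identical in structure, with the single far interval $j=3$ replaced by the pair $j=1,4$ and $x_1$ replaced by $x_2$.
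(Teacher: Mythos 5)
Your reduction of each $V_{ij}$ to $\int_{x_{j-1}}^{x_j}(u-\Pi_hu)(y)\,D_h^2K_y(x_i)\,dy$ and your treatment of the near-diagonal intervals are sound, but the ``far interval'' step fails. For $i=1$, $j=3$ (and likewise for $i=2$, $j\in\{1,4\}$) the intervals $[x_{j-1},x_j]$ and $[x_{i-1},x_{i+1}]$ are \emph{not} disjoint --- they share an endpoint --- so the hypothesis of \Cref{lmm:Dh2Kyxi} is violated, and its conclusion is genuinely false there. Concretely, for $y\in(x_2,x_3)$ the quantity $D_h^2K_y(x_1)$ contains the term $\frac{2}{h_2(h_1+h_2)}K_y(x_2)\simeq h_1^{-2}|x_2-y|^{1-\alpha}$, which diverges as $y\downarrow x_2$, whereas your claimed pointwise bound $|x_1-y|^{-1-\alpha}\le h_2^{-1-\alpha}$ stays bounded on that interval; no constant makes $D_h^2K_y(x_1)\lesssim|x_1-y|^{-1-\alpha}$ hold uniformly. (The proof of \Cref{lmm:Dh2Kyxi} writes $D_h^2K_y(x_i)=c\,|\xi-y|^{-1-\alpha}$ with $\xi\in(x_{i-1},x_{i+1})$ depending on $y$; when the two intervals merely touch, $\xi$ and $y$ can both approach the shared node, so $|\xi-y|\not\simeq|x_i-y|$.)

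The gap is repairable with tools you already use: treat the adjacent ``far'' intervals exactly as you treat the near ones, bounding $|D_h^2K_y(x_i)|$ termwise by the three kernel values divided by mesh widths and integrating each, since $|x_k-y|^{1-\alpha}$ is integrable; the offending term then contributes $h_1^{-2}\cdot h_1^{\alpha/2}\cdot h_1^{2-\alpha}\simeq h^2x_1^{-\alpha/2-2/r}$, the same order as everything else, so your final bound survives. This is in fact what the paper does, globally and more crudely: it never differences the kernel at all, but simply bounds every $T_{kj}$, $0\le k\le 3$, $1\le j\le 4$, by $Cx_1^{2-\alpha/2}$ (the interpolation bounds of \Cref{lmm:Dyj1,lmm:Dyjleh2ya/2m2/r} times a single integration of the integrable kernel), then observes $x_1^{2-\alpha/2}\simeq h_1^2\,h^2x_1^{-\alpha/2-2/r}\simeq h_1^2\,h^2x_2^{-\alpha/2-2/r}$ and that the coefficients in \eqref{def:Vij} are of size $h_1^{-2}$ for $i=1,2$, so each $|V_{ij}|\le Ch^2x_i^{-\alpha/2-2/r}$ and summing over at most four values of $j$ preserves that bound.
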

\begin{proof}
  According \Cref{lmm:Dyj1}, \Cref{lmm:Dyjleh2ya/2m2/r}, \eqref{def:Tij}, \eqref{def:Vij}, it implies, 
  \begin{equation*}
    T_{ij} \le C x_1^{2-\alpha/2} \simeq h_1^2 \; h^2 x_1^{-\alpha/2-2/r} \simeq h_1^2 \; h^2 x_2^{-\alpha/2-2/r} \quad \text{for}\quad 0\le i \le 3, 1\le j \le 4.
  \end{equation*}
  The proof is completed.
\end{proof}

\begin{lemma} \label{ineq:a-b-theta}
Let $a,b> 0,\; \theta \in [0,1]$. Then we have
   $ b^{1-\theta}|a^{\theta}-b^{\theta}| \le |a-b|$. 
\end{lemma}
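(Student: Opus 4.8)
The plan is to eliminate one variable by exploiting scaling. Both sides of $b^{1-\theta}|a^{\theta}-b^{\theta}|\le|a-b|$ are homogeneous of degree one in $(a,b)$, so dividing through by $b>0$ and setting $t:=a/b>0$ reduces the claim to the equivalent one-variable inequality
\begin{equation*}
  |t^{\theta}-1|\le|t-1|,\qquad t>0,\ \theta\in[0,1].
\end{equation*}
No information is lost in this step, and recovering the original statement is just a matter of multiplying back through by $b$.

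I would then settle the reduced inequality by a case split on the position of $t$ relative to $1$, which removes every absolute value. If $t\ge 1$, then $t^{\theta}\ge 1$, so the claim becomes $t^{\theta}\le t$; this is immediate since $\theta\le 1$ and $t\ge 1$ force $t^{\theta}\le t^{1}=t$. If $0<t<1$, then $t^{\theta}\le 1$, so the claim becomes $t\le t^{\theta}$; this follows because $\theta\le 1$ and $0<t\le 1$ force $t^{\theta}\ge t^{1}=t$. The boundary values $\theta=0$ and $\theta=1$ make both sides equal or trivially ordered, so they are covered as well.

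The argument is entirely elementary, and I expect no real obstacle. The only point deserving attention is that the auxiliary comparison of $s^{\theta}$ with $s$ reverses direction as $s$ passes through $1$ --- below $1$ one has $s^{\theta}\ge s$, above $1$ one has $s^{\theta}\le s$ --- and the case split is exactly what accommodates this sign change. A one-shot concavity or mean value argument is tempting, but as one checks it delivers the inequality cleanly on only one of the two ranges of $t$, so the two-case proof is the most economical route.
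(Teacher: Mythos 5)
Your proposal is correct and follows exactly the paper's route: the paper's entire proof is the same homogeneity reduction, ``Since $|t^{\theta}-1|\le|t-1|$ with $t=\frac{a}{b}>0$, the proof is completed.'' You merely spell out the case split ($t\ge 1$ versus $0<t<1$) that the paper leaves as an elementary fact.
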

\begin{proof}
    Since $|t^\theta - 1| \le |t-1|$ with $t=\frac{a}{b} > 0$, the proof is completed.
\end{proof}

\begin{lemma} \label{eq:ineq_fxy}
    Let $x > 0, y\ge 1$ with $\alpha \in (1,2)$. Then we have
    \begin{equation*}
        \begin{aligned}
    f(x,y) =& xy(1+x^{2-\alpha} +y^{2-\alpha}) + (1+x+y)^{3-\alpha} \\
         &- (1+x)(1+y)\left( (1+x)^{2-\alpha} + (1+y)^{2-\alpha} - 1 \right) > 0.
        \end{aligned}
    \end{equation*}
\end{lemma}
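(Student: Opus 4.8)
The plan is to exploit the boundary structure of $f$ together with convexity. First I would set $\beta = 2-\alpha \in (0,1)$, so that $3-\alpha = 1+\beta$ and
\[
 f(x,y) = xy(1+x^\beta+y^\beta) + (1+x+y)^{1+\beta} - (1+x)(1+y)\bigl((1+x)^\beta + (1+y)^\beta - 1\bigr).
\]
Two observations organize everything: $f$ is symmetric, $f(x,y)=f(y,x)$, and it vanishes on the axes, $f(x,0)=f(0,y)=0$ (by direct substitution). Since $f$ is zero at $x=0$, it suffices to show that for fixed $y\ge 1$ the map $x\mapsto f(x,y)$ is convex and has nonnegative slope at $x=0$; then $f(x,y)\ge f(0,y)+x\,\partial_x f(0,y)=x\,\partial_x f(0,y)$, and strict positivity of the slope finishes the argument.

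For the convexity in $x$, I would compute
\[
 \partial_{xx} f = \beta(1+\beta)\,g(x,y), \qquad g(x,y):= x^{\beta-1}y + (1+x+y)^{\beta-1} - (1+x)^{\beta-1}(1+y),
\]
and prove $g\ge 0$. Writing the differences of $t^{\beta-1}$ as integrals of $(1-\beta)t^{\beta-2}$ gives
\[
 g = (1-\beta)\Bigl[ y\!\int_x^{x+1}\! t^{\beta-2}\,dt - \int_{1+x}^{1+x+y}\! t^{\beta-2}\,dt \Bigr],
\]
and since $t^{\beta-2}$ is decreasing, the first integral is $\ge (1+x)^{\beta-2}$ while the second is $\le y(1+x)^{\beta-2}$; the bracket is therefore nonnegative. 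This is the easy half.

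The genuine obstacle is the slope $P(y):=\partial_x f(0,y) = -\beta+(1-\beta)y + y^{1+\beta} + (\beta-y)(1+y)^\beta$, which must be shown positive for $y\ge 1$; this is delicate because $P$ is small near $y=1$ and tends to $0$ as $\alpha\to 1$ (i.e. $\beta\to1$). I would again use convexity: a computation shows $P''(y)=\beta(1+\beta)\bigl[y^{\beta-1}-(1+y)^{\beta-1}-(1-\beta)(1+y)^{\beta-2}\bigr]\ge 0$, because $y^{\beta-1}-(1+y)^{\beta-1}=(1-\beta)\int_y^{y+1}t^{\beta-2}\,dt\ge(1-\beta)(1+y)^{\beta-2}$. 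Hence it is enough to check $P(1)>0$ and $P'(1)\ge 0$. Direct evaluation gives $P(1)=(1-\beta)(2-2^\beta)>0$ and $P'(1)=2^{\beta-1}\bigl(4\cdot 2^{-\beta}-(1+\beta)(2-\beta)\bigr)$; the remaining one-variable inequality $4\cdot2^{-\beta}\ge(1+\beta)(2-\beta)$ on $[0,1]$ I would settle by noting both sides agree at $\beta=1$ and that their difference is convex with negative derivative at $\beta=1$, hence strictly decreasing and nonnegative on $[0,1]$. Combining, $P(y)\ge P(1)+P'(1)(y-1)\ge P(1)>0$ for $y\ge 1$, and therefore $f(x,y)\ge x\,P(y)>0$. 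I expect the verification that $P'(1)\ge 0$ uniformly in $\beta$ to be the most technical point, everything else reducing to monotonicity of $t\mapsto t^{\beta-2}$.
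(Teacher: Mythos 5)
Your proposal is correct, and its skeleton is the same as the paper's: both prove $\partial_x^2 f(x,y)>0$, reduce the problem to the sign of the slope $\partial_x f(0,y)$, and conclude from $f(0,y)=0$. For the convexity step the paper uses Jensen's inequality applied to $t\mapsto t^{1-\alpha}$ with the identity $\frac{y}{1+y}x+\frac{1}{1+y}(1+x+y)=1+x$, while you use the integral representation of the differences of $t^{\beta-1}$ and monotonicity of $t^{\beta-2}$ — these are the same convexity fact in two guises. The substantive difference is at the other step: the paper simply asserts ``$\partial_x f(0,y)>0$ for $y\ge 1$'' with no justification, whereas you actually prove it, by showing $P(y)=\partial_x f(0,y)$ is convex on $y\ge 1$ (again via the monotone-integrand bound $y^{\beta-1}-(1+y)^{\beta-1}\ge(1-\beta)(1+y)^{\beta-2}$), and then verifying $P(1)=(1-\beta)(2-2^\beta)>0$ and $P'(1)=2-2^{\beta-1}(2+\beta-\beta^2)\ge 0$, the latter reduced to the one-variable inequality $2^{2-\beta}\ge 2+\beta-\beta^2$ settled by convexity of the difference and $D'(1)=1-2\ln 2<0$. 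I checked these computations and they are right ($P''(y)=\beta(1+\beta)\bigl[y^{\beta-1}-(1+y)^{\beta-1}-(1-\beta)(1+y)^{\beta-2}\bigr]$, and your $P$ agrees with the paper's $\partial_x f(0,y)$). So your write-up is not just a match for the paper's proof; it closes a gap the paper leaves open, and you correctly identified that assertion as the genuinely delicate point — it degenerates as $\beta\to 1$ (i.e.\ $\alpha\to 1$), where both $P(1)$ and $P'(1)$ tend to limits at or near zero.
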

\begin{proof}
    The first and second derivatives of $f(x,y)$ with respect to $x$ are
    \begin{gather*}
        \begin{aligned}
        \partial_x f(x,y) =&  
            (3-\alpha)\left[ x^{2-\alpha} y + (1+x+y)^{2-\alpha} - (1+x)^{2-\alpha}(1+y) \right] \\
            &+ 1+2y+y^{3-\alpha}-(1+y)^{3-\alpha} ,
        \end{aligned}  \\
        \partial_x^2 f(x, y) = (3-\alpha)(2-\alpha) \left( y x^{1-\alpha} + (1+x+y)^{1-\alpha} - (1+y) (1+x)^{1-\alpha} \right).
    \end{gather*}
    Since $\frac{y}{1+y}x + \frac{1}{1+y}(1+x+y) = 1+x$
    and $x^{1-\alpha}$ is convex for $x>0$,
    using Jensen's inequality, it yields
    $
        \frac{y}{1+y} x^{1-\alpha} + \frac{1}{1+y} (1+x+y)^{1-\alpha} > (1+x)^{1-\alpha} $,
    which implies $\partial_x^2 f(x, y) > 0$ and $\partial_x f(x, y)>\partial_x f(0, y)$. 
    Since $\partial_x f(0, y)>0$ for $y\ge 1$, we have $f(x,y)>f(0,y)=0$.
    The proof is completed.
\end{proof}

\section{Proofs for grid mapping functions}\label{sec:prfs-of-GMFs}
In this appendix, we provide the proofs of Lemmas \ref{lmm:gen-prop-of-MTFs}-\ref{lmm:dQj-itle} in \Cref{subsec:mesh-transport-functions}.
\begin{proof} [\bf Proof of \Cref{lmm:gen-prop-of-MTFs}]
  \label{prf:gen-prop-of-MTFs}
  The first two approximations can be derived from \eqref{def:xj} and \eqref{eq:prop-of-GMFs} with $2\le i, j \le 2N-2$.    \par
  We next prove $|y_{i,j}(\xi) - \xi| \simeq |x_j - x_i|$.
  From \eqref{def:yij}, we have $y_{i,j}(\xi)-\xi = 0$ if $i=j$. 
  Without loss of generality, if $i< j$, then $ y_{i,j}(\xi) - \xi \le x_{j+1} - x_{i-1} \simeq x_j - x_i$.
  Since the second derivatives of $|y_{i,j}(\xi) - \xi|$ is less than zero by \eqref{def:yij}, which implies it is concave. Thus, $|y_{i,j}(\xi) - \xi| \ge \min\{x_{j-1}-x_{i-1}, x_{j+1}-x_{i+1}\} \simeq |x_{j} - x_{i}| $.  \par
  From \eqref{def:yij}, \eqref{def:hij}, \eqref{eq:prop-of-GMFs}, using the approximation above, there exists
  $ h_{i,j}(\xi) = y_{i,j}(\xi) - y_{i,j-1}(\xi) = y_{j-1, j}(y_{i,j-1}(\xi)) - y_{i,j-1}(\xi) \simeq x_{j} - x_{j-1}=h_j. $
  
  The final estimate can be obtained since \(y_{i,j-1}(\xi) - \xi\), \(y_{i,j}(\xi) - \xi\) have the same sign \((\ge 0\) or \(\le 0)\).
\end{proof}

\begin{proof} [\bf Proof of \Cref{lmm:esitmate-of-MTFs-1}]
  From \eqref{def:hij} and \eqref{def:yij}, we can see that
  \begin{equation*}
    \begin{aligned}
      h_{i,j}'(x) 
      &=\begin{cases}
        x^{1/r-1} \left( y_{i,j}^{1-1/r}(x) - y_{i,j-1}^{1-1/r}(x) \right) ,& i< N, j< N, \\
        x^{1/r-1} \left( \dfrac{h_N}{r Z_1} - y_{i,N-1}^{1-1/r}(x) \right)  ,& i< N, j=N, \\
        x^{1/r-1} \left( (2T-y_{i,N+1}(x))^{1-1/r} - \dfrac{h_N}{r Z_1}  \right)   ,& i< N, j=N+1, \\
      \end{cases}
    \end{aligned}
  \end{equation*}
  and
  \begin{equation*}
    \begin{aligned}
      h_{i,j}'(x) 
      &=\begin{cases}
        x^{1/r-1} \left( (2T-y_{i,j}(x))^{1-1/r} - (2T-y_{i,j-1}(x))^{1-1/r} \right)   ,& i< N, j>N+1, \\
        \dfrac{rZ_1}{h_N} \left( y_{N,j}^{1-1/r}(x) - y_{N,j-1}^{1-1/r}(x) \right)  ,& i=N, j< N ,\\
        \dfrac{rZ_1}{h_N} \left( \dfrac{h_N}{rZ_1} - y_{N,N-1}^{1-1/r}(x) \right)   ,& i=N, j=N .
      \end{cases}
    \end{aligned}
  \end{equation*}
  
  For \(2\le i\le N\), \(2\le j < N\), it yields
  \begin{equation} \label{eq:yij1-1/r-j-1}
    \begin{aligned}
      y_{i,j}^{1-1/r}(\xi) - y_{i,j-1}^{1-1/r}(\xi)
      &\le x_{j+1}^{1-1/r} - x_{j-2}^{1-1/r}  \\
      & \le C T^{1-1/r} (r-1) N^{1-r} j^{r-2} = C  (r-1) Z_1 x_j^{1-2/r}.
    \end{aligned}
  \end{equation}
  
  For $2\le i\le N$, \(j=N\), since 
  \begin{equation} \label{eq:hN/rZ1}
    \dfrac{h_N}{r Z_1} = T^{1-1/r} \dfrac{1 - (1-h)^r}{rh} = \eta^{1-1/r} \simeq x_N^{1-1/r}, \quad \eta \in (x_{N-1}, x_{N}),
  \end{equation}
  we have
   $ |\frac{h_N}{r Z_1} - y_{i,N-1}^{1-1/r}(\xi)| \le x_N^{1-1/r} - x_{N-2}^{1-1/r} \simeq (r-1) Z_1 x_N^{1-2/r}$.
  
  For $2\le i\le N$, \(N+1\le j \le 2N-2\), it can be checked 
  $$|h_{i,j}'(\xi)| \le C (r-1)Z_1 \xi^{1/r-1} (2T-x_j)^{1-2/r}.$$
  Combine with \Cref{lmm:hilexi,lmm:gen-prop-of-MTFs}, the first inequality is obtained.

  On the other hand, from \eqref{def:yij}, we have $|y_{i,j}(x)-x| = \text{sign}(j-i) (y_{i,j}(x) - x)$ and $(y_{i,j}(x) - x)' = y_{i,j}'(x) - 1$.
  
  For \(2\le i < N\), \(2\le j<N\), by \Cref{ineq:a-b-theta,lmm:gen-prop-of-MTFs}, we have
  \begin{equation*}
    \xi^{1/r} |y_{i,j}^{1-1/r}(\xi) - \xi^{1-1/r}| \le |y_{i,j}(\xi) - \xi| \simeq |x_{j}-x_{i}|  .
  \end{equation*}
  
  For \(2\le i < N\), \(j=N\), using \eqref{eq:hN/rZ1} and \Cref{ineq:a-b-theta}, it yields
  \begin{equation} \label{eq:hN/rZ1-xi<N}
    \begin{aligned}
      \eta^{1/r} \left|\frac{h_N}{r Z_1} - \xi^{1-1/r}\right| &\le |\eta - \xi|, \quad \eta \in (x_{N-1}, x_{N})  \\
      & \le |x_N - x_i| + |h_N| + |h_{i+1}| \le 3 |x_N - x_i|.
    \end{aligned}
  \end{equation}
  
  For \(2\le i < N\), \(N<j\le 2N-2\), from \Cref{ineq:a-b-theta}, one has
  \begin{equation*} \label{eq:2T-yij1-1/r}
    \begin{aligned}
      &\xi^{1/r} |(2T-y_{i,j}(\xi))^{1-1/r} - \xi^{1-1/r}|
              \le |2T - y_{i,j}(\xi) - \xi|  \\
      & \quad \le |2T - x_j - x_i| + |y_{i,j}(\xi) - x_j| + |\xi - x_i|
              \le |2T - x_j - x_i| + 2 h_N   \\
      & \quad \le |x_j - T| + |T-x_i| + 2h_N \le 2 |x_j - x_i|.
    \end{aligned}
  \end{equation*}

  Similar to proof of \eqref{eq:hN/rZ1-xi<N}, we have \(y_{N,N}(x) - x \equiv 0\) and
  \begin{equation*}
  \begin{gathered}
    \eta^{1/r} |y_{N, j}^{1-1/r}(\xi) - \dfrac{h_N}{r Z_1}| \le C |x_j - x_N|, \quad i=N, j<N, \\
    \eta^{1/r} |(2T-y_{N, j}(\xi))^{1-1/r} - \frac{h_N}{r Z_1}| \le C |x_j - x_N|, \quad i=N, j>N.
  \end{gathered}
  \end{equation*}
  Thus, using \eqref{def:yij} and \Cref{lmm:gen-prop-of-MTFs}, the second inequality is obtained.
\end{proof}

\begin{proof} [\bf Proof of \Cref{lmm:esitmate-of-MTFs-2}]
  By \Cref{def:gridmapfunc}, \Cref{ineq:a-b-theta} and \eqref{eq:hN/rZ1}, there exist
  \begin{equation} \label{eq:Zj-i}
  \begin{split}
    &x_j^{1-1/r} |Z_{j-i}| = x_j^{1-1/r} |x_j^{1/r} - x_i^{1/r}| \le |x_j - x_i|, \quad i<N, j < N, \\
    &Z_{2N-j+i} \le Z_{2N} =  2 T^{1/r},                    \qquad i<N, j>N,    \\
    &\dfrac{h_N}{r Z_1} \simeq x_N^{1-1/r},      \qquad i=N, 2\le j \le 2N-2. 
  \end{split}
  \end{equation}
  Combine with \eqref{def:yij} and \Cref{lmm:gen-prop-of-MTFs}, the first inequality is obtained.

  From \eqref{def:hij}, it yields $h_{i,j}''(x) = y_{i,j}''(x) - y_{i,j-1}''(x)$.
  For $3\le j \le 2N-2$, we have
  \begin{equation}   \label{eq:y1-2/r}
  \begin{gathered}
  |y_{i,j}^{1-2/r}(\xi) - y_{i,j-1}^{1-2/r}(\xi)| \simeq (r-2) Z_1 x_j^{1-3/r},  \\
  |(2T-y_{i,j}(\xi))^{1-2/r} - (2T-y_{i,j-1}(\xi))^{1-2/r}| \simeq (r-2) Z_1 (2T-x_j)^{1-3/r},  \\
  \end{gathered}
  \end{equation}
  which can be similarly proven as \eqref{eq:yij1-1/r-j-1}. 
    
  For \(2\le i<N\), \(3\le j<N\), it yields
  \begin{equation*} \label{eq:yij1-2/rZj-i-depart}
    \begin{aligned}
      y_{i,j}^{1-2/r}(\xi) Z_{j-i} - y_{i,j-1}^{1-2/r}(\xi) Z_{j-i-1}
      = \left( y_{i,j}^{1-2/r}(\xi) - y_{i,j-1}^{1-2/r}(\xi) \right) Z_{j-i} + y_{i, j-1}^{1-2/r}(\xi) Z_1.
    \end{aligned}
  \end{equation*}
  Combine with \eqref{eq:Zj-i} and \eqref{eq:y1-2/r}, we get
  \begin{equation*} \label{eq:yij1-2/rZj-i-}
    |y_{i,j}^{1-2/r}(\xi) Z_{j-i} - y_{i,j-1}^{1-2/r}(\xi) Z_{j-i-1}| \le C Z_1 \left( |r-2| x_j^{-2/r}|x_j - x_i| + x_j^{1-2/r}  \right).
  \end{equation*}
  
  For $2\le i < N$, \(j=N, N+1\), it leads to
      $$|h_{i,j}''(x)| \le |y_{i,j}''(x)| + |y_{i,j-1}''(x)| \le C (r-1) x_i^{1/r-2} x_N^{1-1/r}.$$

  For $2\le i<N$, \(j > N+1\), from \Cref{lmm:hilexi} and \eqref{eq:y1-2/r}, we have
  \begin{equation*}
    \begin{aligned}
      & \left|  \delta(y_{i,j}(\xi))^{1-2/r} Z_{2N-(j-i)} - \delta(y_{i,j-1}(\xi))^{1-2/r} Z_{2N-(j-i-1)}  \right| \\
      &\le C Z_1 \left( |r-2| \delta(x_j)^{1-3/r} x_N^{1/r} + \delta(x_j)^{1-2/r}  \right) \le C Z_1 \delta(x_j)^{1-3/r} x_N^{1/r}.
    \end{aligned}
  \end{equation*}
  
  For \(i=N, j=N,N+1\), one has
  $  |h_{N,j}''(\xi)| 
    \le C x_N^{-1}$.
  For \(i=N\), $j\neq N, N+1$, using \eqref{eq:y1-2/r}, \eqref{def:yij} and \Cref{lmm:gen-prop-of-MTFs}, the second inequality is obtained.
\end{proof}

\begin{proof} [\bf Proof of \Cref{lmm:d2Pj-itle}]
  According to \(|y_{i,j}^\theta(\xi) - \xi| = \text{sign}(j-i-1+\theta) (y_{i,j}^\theta(\xi) - \xi)\) with $\theta\in(0,1)$, \Cref{lmm:Dh2simd2}, \eqref{def:I2-a} and \eqref{def:Pj-itheta-jlN}, we have
   $ D_h^2 {P_{i,j}^\theta}(x_i) = {P_{i,j}^\theta}''(\xi), \quad \xi\in (x_{i-1}, x_{i+1})$.
  From \Cref{lmm:gen-prop-of-MTFs,def:yij,lmm:esitmate-of-MTFs-1,lmm:esitmate-of-MTFs-2,lmm:regularity-u,lmm:hilexi}, and 
  regarding the selection process of $i,j$ within Case 1-3, it turns out that
  \begin{gather*}
    h_{i,j}(\xi) \le C h_j, \quad |h_{i,j}'(\xi)| \le C(r-1) h_j x_i^{-1}, \\
    |y_{i,j}^\theta(\xi) - \xi| \le C |y_j^\theta - x_i|, \quad 
    \left|(y_{i.j}^\theta(\xi) - x_i)'\right| \le C |y_j^\theta - x_i| x_i^{-1}, \\
    \left|u''(y_{i,j}^\theta(\xi))\right| \le C x_i^{\alpha/2-2}, \;
    \left|\left(u''(y_{i,j}^\theta(\xi))\right)'\right| \le C x_i^{\alpha/2-3}, \; 
    \left|\left(u''(y_{i,j}^\theta(\xi))\right)''\right| \le C x_i^{\alpha/2-4}.
  \end{gather*}
  By \Cref{lmm:esitmate-of-MTFs-2}, we have
  \begin{alignat*}{3}
    |h_{i,j}''(\xi)| &\le C(r-1) h_j x_i^{-2}, \quad
    &&\left|(y_{i,j}^\theta(\xi) - x_i)''\right| \le C(r-1) |y_j^\theta - x_i| x_i^{-2}, \quad&\text{for Case 1}, \\
    |h_{i,j}''(\xi)| &\le C(r-1), \quad    
    &&\left|(y_{i,j}^\theta(\xi) - x_i)''\right| \le C(r-1), \quad&\text{for Case 2},   \\
    |h_{i,j}''(\xi)| &\le C(r-1) h_j, \quad
    &&\left|(y_{i,j}^\theta(\xi) - x_i)''\right| \le C(r-1), \quad&\text{for Case 3}.
  \end{alignat*}
  Using Leibniz formula and chain rules, the desired results are obtained.
\end{proof}

\begin{proof} [\bf Proof of \Cref{lmm:dQj-itle}]
  \label{prf:dQj-itle}
  Since
  \begin{equation*}
    \begin{aligned}
       & \frac{{Q_{i,j,l}^\theta}(x_{i+1}) u'''(\eta_{j+1}^\theta) - {Q_{i,j,l}^\theta}(x_{i}) u'''(\eta_{j}^\theta)}{h_{i+1}}  \\
       & \quad = \frac{Q_{i,j,l}^\theta(x_{i+1}) - Q_{i,j,l}^\theta(x_{i})}{h_{i+1}} u'''(\eta_{j+1}^\theta)
      + Q_{i,j,l}^\theta(x_i) \frac{u'''(\eta_{j+1}^\theta)-u'''(\eta_{j}^\theta)}{h_{i+1}}    .
    \end{aligned}
  \end{equation*}
  There exists $\xi \in (x_{i}, x_{i+1})$ such that
    $\frac{Q_{i,j,l}^\theta(x_{i+1}) - Q_{i,j,l}^\theta(x_{i})}{h_{i+1}} = {Q_{i,j,l}^\theta}'(\xi)$.
  From \eqref{def:Qj-itheta-jlN}, \Cref{lmm:gen-prop-of-MTFs,lmm:esitmate-of-MTFs-1,lmm:hilexi}, Leibniz formula and chain rule, we have
  \begin{equation*}
    \begin{aligned}
      |{Q_{i,j,l}^\theta}'(\xi)| \le C h_j^l |y_{j}^\theta - x_{i}|^{1-\alpha} (x_i^{-1} + x_i^{1/r-1} \delta(x_j)^{-1/r}),\;\;
      Q_{i,j,l}^\theta(x_i) = C h_{j}^l |y_j^\theta-x_i|^{1-\alpha}.
    \end{aligned}
  \end{equation*}
  According to \Cref{lmm:hilexi,lmm:regularity-u}, it implies
   $ |u^{(l-1)}(\eta_{j+1}^\theta)| 
    \le C\delta(x_j)^{\alpha/2-l+1} $,
  and
  \begin{equation*}
    \begin{aligned}
      \frac{|u^{(l-1)}(\eta_{j+1}^\theta)-u^{(l-1)}(\eta_{j}^\theta)|}{h_{i+1}}
      &= |u^{(l)}(\eta)| \frac{\eta_{j+1}^\theta - \eta_{j}^\theta}{h_{i+1}}  , \quad \eta \in (x_{j-1}, x_{j+1})\\
      & \le C \delta(\eta)^{\alpha/2-l} \frac{x_{j+1}-x_{j-1}}{h_{i+1}}
       \simeq x_i^{1/r-1} \delta(x_j)^{\alpha/2-l+1-1/r} .
    \end{aligned}
  \end{equation*}
  Thus, the first inequality is obtained.
  The second one can be similarly proven. 
\end{proof}

\bibliographystyle{siamplain}
\bibliography{references}

\end{document}